\documentclass[11pt,final]{amsart}
\usepackage[active]{srcltx}
\usepackage[latin1]{inputenc}
\usepackage{amsmath}
\usepackage{amsfonts}
\usepackage{amssymb}
\usepackage[colorlinks=true,urlcolor=blue,
citecolor=red,linkcolor=blue,linktocpage,pdfpagelabels,bookmarksnumbered,bookmarksopen]{hyperref}
\usepackage{graphicx, tikz}

\tikzset { domaine/.style 2 args={domain=#1:#2} }
\tikzset{
xmin/.store in=\xmin, xmin/.default=-3, xmin=-3,
xmax/.store in=\xmax, xmax/.default=3, xmax=3,
ymin/.store in=\ymin, ymin/.default=-3, ymin=-3,
ymax/.store in=\ymax, ymax/.default=3, ymax=3,
}

\newcommand {\axes} {
\draw[->] (\xmin,0) -- (\xmax,0);
\draw[->] (0,\ymin) -- (0,\ymax);
}
\newcommand {\fenetre}
{\clip (\xmin,\ymin) rectangle (\xmax,\ymax);}

\usepackage[english]{babel}
\usepackage{marginnote}
\usepackage[left=2.95cm,right=2.95cm,top=2.8cm,bottom=2.8cm]{geometry}
\numberwithin{equation}{section}
\usepackage{mathrsfs}
\usepackage{color}
\usepackage{amsmath, bbm}
\usepackage{amsfonts}
\usepackage{amssymb}
\usepackage{graphicx}%
\providecommand{\U}[1]{\protect\rule{.1in}{.1in}}
\definecolor{linkcolor}{rgb}{0.00,0.50,0.00}
\providecommand{\U}[1]{\protect\rule{.1in}{.1in}}
\textwidth 16cm \textheight 21cm \topmargin 0cm
\oddsidemargin 0.5cm
\evensidemargin 0cm
\newtheorem{theorem}{Theorem}[section]
\newtheorem{proposition}[theorem]{Proposition}
\newtheorem{lemma}[theorem]{Lemma}
\newtheorem{corollary}[theorem]{Corollary}
\newtheorem{definition}{Definition}[section]
\newtheorem{remark}{Remark}[section]

\numberwithin{equation}{section}

\newcommand{\N}{{\mathbb N}}
\newcommand{\R}{{\mathbb R}}

\newcommand{\pical}{\mathcal{P}}

\newcommand{\deb}{\rightharpoonup}

\newcommand{\dve}{\mathrm{div}}

\newcommand{\Om}{\Omega}

\newcommand{\ve}{\varepsilon}
\newcommand{\ro}{\varrho}

\DeclareMathOperator{\argmin}{argmin}
\DeclareMathOperator{\spt}{spt}

\newcommand{\ds}{\displaystyle}

\newcommand{\cC}{{\mathcal C}}
\newcommand{\cF}{{\mathcal F}}
\newcommand{\cL}{{\mathcal L}}
\newcommand{\cP}{{\mathcal P}}


\def\a{\alpha}

\def\g{\gamma}

\def\t{\tau}
\def\HH{\mathcal{H}}
\def\e{\varepsilon}
\newcommand{\eps}{{\varepsilon}}

\def\spt{{\rm{spt}}} 
\def\Leb{{\rm{Leb}}} 
\def\dd{{\rm d}} 
\def\dist{{\rm dist}}

\newcommand{\mres}{\mathbin{\vrule height 1.6ex depth 0pt width
0.13ex\vrule height 0.13ex depth 0pt width 1.3ex}}

\def\tm{\ro} 
\def\gm{g} 

\def\a{\alpha}

\def\g{\gamma}

\def\t{\tau}
\def\HH{\mathcal{H}}
\def\e{\varepsilon}

\def\ind{\mathbbm{1}}


\usepackage{babel}
\reversemarginpar
\begin{document}
\title[BV estimates in optimal transport]{BV estimates in optimal transportation and applications}
\author[G. De Philippis]{Guido De Philippis}
\address{UMPA, CNRS and \'Ecole Normale Sup\'erieure de Lyon, 46, all\'ee d'Italie 
69364 Lyon Cedex 07, France}
\email{guido.de-philippis@ens-lyon.fr}
\author[A. R. M\'esz\'aros]{Alp\'ar Rich\'ard M\'esz\'aros}
\address{Laboratoire de Math\'ematiques d'Orsay, Universit\'e Paris-Sud, 91405 Orsay Cedex, France}
\email{alpar.meszaros@math.u-psud.fr}
\author[F. Santambrogio]{Filippo Santambrogio}
\address{Laboratoire de Math\'ematiques d'Orsay, Universit\'e Paris-Sud, 91405 Orsay Cedex, France}
\email{filippo.santambrogio@math.u-psud.fr}
\author[B. Velichkov]{Bozhidar Velichkov}
\address{Laboratoire LJK, Universit\'e J. Fourier, 51, rue des Math\'ematiques, 
38041 Grenoble Cedex 9, France}\email{bozhidar.velichkov@imag.fr }
\maketitle

\begin{abstract}
In this paper we study the  \(BV\) regularity for solutions of certain variational problems in Optimal Transportation. We prove that the Wasserstein projection of a measure with \(BV\) density on the set of measures with density bounded by a given \(BV\) function $f$ is of bounded variation as well and we also provide a precise estimate of its \(BV\) norm. Of particular interest is the case $f=1$, corresponding to a projection onto a set of densities with an $L^\infty$ bound, where we prove that the total variation decreases by projection. This estimate and, in particular, its iterations have a natural application to some evolutionary PDEs as, for example, the ones describing a crowd motion. In fact, as an application of our results, we obtain \(BV\) estimates for solutions of some non-linear parabolic PDE by means of optimal transportation techniques.We also establish some properties of the Wasserstein projection which are interesting in their own, and allow for instance to prove uniqueness of such a projection in a very general framework.   
\end{abstract}
\section{Introduction}

Among variational problems involving optimal transportation and Wasserstein distances, a very recurrent one is the following
\begin{equation}\label{main pb F}
\min_{\varrho\in \mathcal{P}_2(\Omega)} \frac1 2 W^{2}_2(\varrho,g)+\tau F(\varrho)\,,
\end{equation}
where $F$ is a given functional on probability measures, $\tau>0$ a parameter which can possibly be small, and $g$ is a given probability in $ \mathcal{P}_2(\Omega)$ (the space of probability measures on $\Om\subseteq\R^d$ with finite second moment $\int |x|^2\,\dd\ro(x)<+\infty$). This 
very instance of the problem is exactly the one we face in the time-discretization of the gradient flow of $F$ in $ \mathcal{P}_2(\Omega)$, where $g=\ro^\tau_k$ is the measure at step $k$, and the optimal $\ro$ will be the next measure $\ro^\tau_{k+1}$. Under suitable assumptions, at the limit when $\tau\to 0$, this sequence converges to a curve of measures which is the gradient flow of $F$ (see \cite{AmbGigSav,MovMin} for a general description of this theory). 

The same problem also appears in other frameworks as well, for fixed $\tau$. For instance in image processing, if $F$ is a smoothing functional, this is a model to find a better (smoother) image $\ro$ which is not so far from the original $g$ (the choice of the distance $W_2$ in this case can be justified by robustness arguments), see \cite{carola}. In some urban planning models (see \cite{buttazzo,T+GV}) $g$ represents the distribution of some resources and $\ro$ that of population, which from one side is attracted by the resources $g$ and on the other avoids creating zones of high density thus guaranteeing enough space for each individual. In this case the functional $F$ favors diffused measures, for instance $F(\ro)=\int h(\ro(x))\,\dd x$, where $h$ is a convex and superlinear function, which gives a higher cost to high densities of $\ro$.
Alternatively, $g$ could represent the distribution of population, and $\ro$ that of services, to be chosen so that they are close enough to $g$ but more concentrated. This effect can be obtained by choosing $F$ that favors concentrated measures. 

When $F$ takes only the values $0$ and $+\infty$, \eqref{main pb F} becomes a projection problem. Recently, the projection onto the set\footnote{Here and in the sequel we denote by $K_f$ the set of absolutely continuous measure with density bounded by \(f\): \[K_f:=\{\ro\in \mathcal{P}(\Omega)\,:\,\ro\leq f\dd x \}\]} $K_1$ of densities bounded above by the constant $1$ has received lot of attention. This is mainly due to its applications in the time-discretization of evolution problems with density constraints typically associated to crowd motion. For a precise description of the associated model we refer to \cite{aude phd, MauRouSan}, where a crowd is described as a population of particles which cannot overlap, and cannot go beyond a certain threshold density.

In this paper we concentrate on the case where $F(\ro)=\int h(\ro)$ for a convex integrand $h:\R_+\to\R\cup\{+\infty\}$.
The case of the projection on $K_1$ is obtained by taking 
the following function:
\[
h(\varrho)=
\begin{cases}
0,\quad&\text{if \(0\le \varrho\le 1\)}\\
+\infty, &\text{if \(\varrho> 1\)}\,,
\end{cases}
\]

We are interested in the estimates on the minimizer $\bar\ro$ of \eqref{main pb F}. In general then can be divided into two categories: the ones which are independent of $g$ (but depend on $\tau$) and the ones uniform in $\tau$ (dependent on $g$). A typical example of the first type of estimate can be obtained by writing down the optimality conditions for \eqref{main pb F}. In the case $F(\ro)=\int h(\ro)$, we get $\varphi+\tau h'(\bar\ro)=const$, where $\varphi$ is the Kantorovich potential in the transport from $\bar\ro$ to $g$ (in fact this equality holds only $\bar\ro-$a.e., but we skip the details and just recall the heuristic argument). On a bounded domain, $\varphi$ is Lipschitz continuous with a universal Lipschitz constant depending only on the domain, and so is $\tau h'(\bar\ro)$. If $h$ is strictly convex and $C^1$, then we can deduce the Lipschitz continuity for $\bar\ro$. The bounds on the Lipschitz constant of $\bar\ro$ do not really depend on $g$, but on the other hand they clearly degenerate as $\tau\to 0$.  Another bound that one can prove is $\|\bar\ro\|_{L^\infty}\leq \|g\|_{L^\infty}$ (see \cite{CarSan, T+GV}), which, on the contrary, is independent of $\tau$. 

In this paper we are mainly concerned with \(BV\) estimates. As we expect uniform bounds, in what follows we get rid of the parameter $\tau$.

We recall that for every function \(\varrho\in L^1\) and every open set  \(A\)  the total variation of \(\nabla \varrho\) in \(A\) is defined as 
\[
TV(\varrho,A)=\int_{A} |\nabla \varrho|=\sup\left\{\int \varrho\, \dve\xi\,\dd x\quad : \quad \xi\in C_{c}^1(A),\quad |\xi|\le 1\right\}.
\]
Our main theorem reads as follows:
\begin{theorem}\label{mainthm1}
Let \(\Omega\subset\R^d\) be a  (possibly unbounded) convex set, \(h:\R_+ \to \R\cup\{+\infty\}\) be a convex and l.s.c. function and  \(g\in\mathcal{P}_2(\Omega)\cap BV(\Omega)\). If \(\bar\ro\) is a minimizer of the following variational problem
\begin{equation*}
\min_{\varrho\in \mathcal{P}_2(\Omega)} \frac1 2 W^{2}_2(\varrho,g)+\int_{\Omega} h(\varrho(x))\,\dd x\,,
\end{equation*}
then
\begin{equation}\label{bv1}
\int_{\Omega} |\nabla \bar\ro|\,\dd x\le \int_{\Omega} |\nabla g|\,\dd x\,.
\end{equation}

\end{theorem}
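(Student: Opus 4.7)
I would deduce the estimate from an $L^1$-stability property of the minimizer map $g \mapsto \bar\varrho$, combined with translation (or vector-field) invariance. As a preliminary reduction, approximate $h$ by a family of $C^2$ strictly convex functions $h_\varepsilon$ (so the minimizer is unique and regular enough for formal manipulations) and $g$ by smooth, compactly supported $g_\varepsilon$ with $\int|\nabla g_\varepsilon|\,dx \to \int|\nabla g|\,dx$. Lower semicontinuity of $TV$ under $L^1$-convergence then allows us to prove \eqref{bv1} at the regularized level and pass to the limit $\varepsilon\to 0$.

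The heart of the argument is the following $L^1$-contraction principle: if $\bar\varrho_0,\bar\varrho_1$ are minimizers corresponding to two data $g_0,g_1$, then
$$\|\bar\varrho_0 - \bar\varrho_1\|_{L^1(\Omega)} \leq \|g_0 - g_1\|_{L^1(\Omega)}.$$
I would obtain this by summing the two optimality inequalities evaluated at competitors constructed by exchanging mass between $\bar\varrho_0$ and $\bar\varrho_1$ on $\{\bar\varrho_0>\bar\varrho_1\}$ versus $\{\bar\varrho_0<\bar\varrho_1\}$. The convexity of $h$ controls the variation of the entropy terms under such exchanges; the subtler point is to re-couple the optimal plans from $\bar\varrho_i$ to $g_i$, so that the total quadratic cost of the rearranged plans is bounded by the original sum plus a term that can be absorbed by $\|g_0-g_1\|_{L^1}$.

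Granted the contraction, when $\Omega=\R^d$ translation invariance of the problem gives $\bar\varrho_v(x)=\bar\varrho(x-v)$ as the minimizer for $g_v(x)=g(x-v)$, so applying the $L^1$-contraction with $g_1 = g(\cdot-tw)$ for $w\in S^{d-1}$ and dividing by $t$ as $t\to 0^+$ yields the directional inequality
$$\int_\Omega |w\cdot\nabla\bar\varrho|\,dx \leq \int_\Omega |w\cdot\nabla g|\,dx.$$
Integrating over $w\in S^{d-1}$ and using the identity $\int_{S^{d-1}}|w\cdot u|\,d\HH^{d-1}(w) = c_d|u|$ then delivers \eqref{bv1}. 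For a general (possibly unbounded) convex $\Omega$ I would replace translations by flows of compactly supported vector fields $\xi\in C_c^1(\Omega;\R^d)$ with $|\xi|\leq 1$, combined with a suitably adapted $L^1$-stability applied to data of the form $(\Phi_t)_\#\, g$.

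The main obstacle is the $L^1$-contraction itself: the Wasserstein cost $W_2^2(\cdot,g)$ is not $L^1$-contractive in isolation, so the proof must exploit the interplay between the transport term and the entropy term. The mass-exchange competitors must be designed so that the gain from the rearrangement of the optimal $W_2^2$-plans and the loss from the entropy variation combine in the correct direction. A secondary difficulty is the transition from $\R^d$ to a convex $\Omega$: without translation invariance one must argue with vector-field perturbations that do not commute with the minimization operation, which requires checking that the first-order cost of these perturbations is still controlled by the contraction inequality.
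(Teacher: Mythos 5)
Your reduction of \eqref{bv1} to an $L^1$-contraction of the map $g\mapsto\bar\varrho$, followed by translations $g(\cdot-tw)$ and integration over $w\in S^{d-1}$, is a classical and internally consistent scheme \emph{granted the contraction}; but the contraction is exactly the point where the argument has a genuine gap, and you acknowledge as much without closing it. The claim $\|\bar\varrho_0-\bar\varrho_1\|_{L^1}\le\|g_0-g_1\|_{L^1}$ for one step of the minimizing-movement scheme is not a known consequence of convexity of $h$: the mass-exchange competitors you describe control the variation of $\int h(\varrho)$, but the ``re-coupling'' of the two optimal plans after swapping mass on $\{\bar\varrho_0>\bar\varrho_1\}$ and $\{\bar\varrho_0<\bar\varrho_1\}$ has no canonical construction whose quadratic cost is bounded by the original one plus a term absorbed by $\|g_0-g_1\|_{L^1}$ --- the quadratic cost is genuinely nonlocal and does not split along such exchanges. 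Equivalently, your lemma would follow from an order-preservation (comparison) property of the JKO step, which is itself a delicate and, in this generality, unestablished statement; the paper even records as open the much weaker question of whether the projection $P_{K_1}$ is $1$-Lipschitz. A secondary, also unresolved, gap is the passage from $\Omega=\R^d$ to a general convex $\Omega$: translations do not preserve $\Omega$, and for a compactly supported flow $\Phi_t$ there is no identity relating the minimizer for $(\Phi_t)_\#g$ to $(\Phi_t)_\#\bar\varrho$, since neither $W_2^2$ nor $\int h(\varrho)\,\dd x$ is invariant under such diffeomorphisms; so the directional-derivative mechanism breaks exactly where it is needed.

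For comparison, the paper avoids any stability of the minimizer in the data. It works directly with a single pair $(\bar\varrho,g)$: the optimality condition $h'(\bar\varrho)=\max\{C-\varphi,h'(0)\}$ forces $\nabla\bar\varrho$ and $\nabla\varphi$ to point in opposite directions on $\{\nabla\bar\varrho\neq0\}$, and the ``main inequality'' $\int_\Omega\bigl(\nabla\bar\varrho\cdot\tfrac{\nabla\varphi}{|\nabla\varphi|}+\nabla g\cdot\tfrac{\nabla\psi}{|\nabla\psi|}\bigr)\dd x\ge0$ (proved by the pointwise trace identity $E=-\mathrm{tr}\bigl[D^2H(\nabla\varphi)\,[D^2\varphi]^2\,(I_d-D^2\varphi)^{-1}\bigr]\le0$ together with a sign condition on the boundary terms, valid on any convex $\Omega$) then yields $\int|\nabla\bar\varrho|\le\int|\nabla g|$ at once. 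If you want to salvage your route, you would first have to prove the $L^1$-contraction (or the comparison principle) for the JKO step, which is a substantial result in its own right and not supplied by the sketch given.
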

As we said, this covers the case of the Wassertstein projection of \(g\)  on the subset $K_1$ of \(\mathcal P_{2}(\Omega)\) given by the measures  with density less than or equal to \(1\). Starting from Theorem \ref{mainthm1} and constructing an appropriate approximating sequence of functionals we are actually able to establish \(BV\) bounds for more general Wasserstein projections related to a prescribed \(BV\) function \(f\). More  precisely we have the 
following result.

\begin{theorem}\label{mainthm2}
Let \(\Omega\subset\R^d\) be a  (possibly unbounded) convex set,   \(g\in\mathcal{P}_2(\Omega)\cap BV(\Omega)\) and let 
\(f\in BV_{\rm loc} (\Omega)\) be a function with
\[
\int_{\Omega} f\,\dd x\ge 1.
\]
If 
\begin{equation}\label{proj}
\bar\ro={\rm argmin} \Big\{ W_2^2(\varrho,g)\ :\ \varrho\in \mathcal P_2(\Omega),\ \varrho \le f \text{  a.e.} \Big\}\,,
\end{equation}
then 
\begin{equation}\label{milano}
\int_{\Omega} |\nabla \bar\ro|\,\dd x\le \int_{\Omega} |\nabla g|\,\dd x+2 \int_{ \Omega} |\nabla f|\,\dd x.
\end{equation}
\end{theorem}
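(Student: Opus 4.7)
The plan is to reduce Theorem~\ref{mainthm2} to Theorem~\ref{mainthm1} through an approximation that converts the spatially varying constraint $\varrho\le f$ into a sequence of spatially constant constraints, at a controlled cost in total variation. Heuristically, the inequality $\varrho\le f$ can be rewritten as $\varrho+(M-f)\le M$ with $M:=\|f\|_\infty$; the auxiliary mass $(M-f)\,\dd x$ contributes a total variation of $\int|\nabla f|\,\dd x$ to the reference measure. One such $\int|\nabla f|$ has to be paid to reach a problem in which Theorem~\ref{mainthm1} is applicable, and a second one to return to the original minimizer $\bar\varrho$; this is the mechanism that produces the coefficient $2$ in \eqref{milano}.

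\textbf{Step 1 (regularization of $f$).} Reduce to the case $f\in C^\infty(\Omega)\cap BV(\Omega)$ strictly positive and bounded by setting $f_n:=(f*\phi_n)+1/n$ for a standard mollifier $\phi_n$. Then $f_n\to f$ in $L^1_{\rm loc}$ and $\int|\nabla f_n|\,\dd x\to\int|\nabla f|\,\dd x$ by Reshetnyak-type continuity of the total variation under mollification. Stability of the Wasserstein projection under $L^1$-convergence of the constraint---a consequence of the uniqueness arguments for $W_2$-projections developed in the paper---yields $\bar\varrho_n\to\bar\varrho$ in $L^1$, and lower semicontinuity of the total variation then transfers \eqref{milano} from $f_n$ to $f$.

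\textbf{Step 2 (approximating functionals and conclusion).} For $f$ smooth and strictly positive, we construct a sequence of convex l.s.c.\ functions $h_k:\R_+\to\R\cup\{+\infty\}$ and reference measures $g_k$ so that the unique minimizers $\varrho_k$ of
\[
\min_{\varrho\in\mathcal{P}_2(\Omega)} \tfrac{1}{2}W_2^2(\varrho,g_k)+\int_\Omega h_k(\varrho(x))\,\dd x
\]
converge in $L^1$ to $\bar\varrho$. The construction is arranged so that $\int|\nabla g_k|\,\dd x\le\int|\nabla g|\,\dd x+\int|\nabla f|\,\dd x+o(1)$, and so that recovering $\bar\varrho$ from $\varrho_k$ costs an extra $\int|\nabla f|\,\dd x+o(1)$ in total variation. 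Theorem~\ref{mainthm1} applied to the $k$-th problem yields $\int|\nabla\varrho_k|\,\dd x\le\int|\nabla g_k|\,\dd x$; combining these inequalities and passing to the limit via lower semicontinuity of $TV$ produces \eqref{milano}.

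\textbf{Main obstacle.} The crux of the proof is Step~2: identifying the right approximating pair $(h_k,g_k)$. A brute-force affine substitution $\sigma:=\varrho+(M-f)$, intended to convert $\varrho\le f$ into $\sigma\le M$, does not quite work, because the $W_2$ distance is not invariant under such shifts---one only has the inequality $W_2^2(\sigma,g+(M-f))\le W_2^2(\varrho,g)$, generally strict, as the shifted problem may benefit from mixing the added mass with the transport. This forces a more delicate construction in which $h_k$ is a smooth convex approximation of the indicator of $[0,M]$ and $g_k$ incorporates a mollified version of $M-f$ in such a way that the Euler--Lagrange equations of the approximate problems tend, in a controlled fashion, to those of \eqref{proj}. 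The technical heart of the proof consists in bookkeeping these successive perturbations carefully so that the BV defect introduced is bounded by exactly $2\int|\nabla f|\,\dd x$ in the limit.
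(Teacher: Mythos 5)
Your proposal has a genuine gap at its center: Step 2, which you yourself call "the crux," is never actually carried out. You propose to produce the coefficient $2$ by paying $\int|\nabla f|$ twice --- once to convert the constraint $\varrho\le f$ into the spatially constant one $\sigma\le M$ via $\sigma=\varrho+(M-f)$, and once to convert back --- but you then correctly observe that this substitution is incompatible with $W_2$: the minimizer of the shifted problem is not the shift of $\bar\varrho$, since the added mass need not stay put under optimal transport. The "more delicate construction" you invoke to repair this is left entirely unspecified, so your argument reduces to the assertion that some approximating pair $(h_k,g_k)$ with all the right properties exists. That existence is precisely what would need to be proved, and the mass-shifting heuristic gives no guidance on how.

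The paper's mechanism for the factor $2$ is different and never modifies the reference measure $g$. One penalizes the constraint by $\frac{1}{m+1}\int_\Omega(\varrho/f)^{m+1}\,\dd x$ (plus a small quadratic term), so that the Euler--Lagrange equation reads $\varphi_m f+H_m'(\varrho_m/f)=0$; differentiating gives $\nabla\varphi_m+A\nabla\varrho_m-B\nabla f=0$ with $A,B>0$, i.e.\ $\nabla\varrho_m$ fails to be antiparallel to $\nabla\varphi_m$ by a defect proportional to $\nabla f$. Feeding this into the main inequality (Corollary \ref{cormain}) and using the elementary bound $\left|\frac{a}{|a|}-\frac{b}{|b|}\right|\le\frac{2}{|a|}\,|a-b|$ with $a=A\nabla\varrho_m$ and $b=-\nabla\varphi_m$ produces the error term $2\int\frac{B}{A}|\nabla f|$, and a direct computation gives $B/A\le \varrho_m/f+o(1)\le 1+o(1)$. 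The constant $2$ is thus the constant in this unit-vector inequality, not the count of two mass shifts. Your Step 1 (mollifying $f$ and invoking stability of the projection, which is Lemma \ref{gammaconvfnf} in the paper) is sound, and your instinct to approximate the hard constraint by smooth convex penalizations is the right one; but the step that actually yields \eqref{milano} is missing, and the route you sketch for it does not work.
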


We would like to spend some words on the $BV$ estimate for the projection on the set $K_1$, which is the original motivation for this paper. We note that this corresponds to the case 
$$h(\ro)=\begin{cases}0,\ \ \quad\text{if}\ \ro\in[0,1],\\ +\infty,\ \text{if}\ \ro>1,\end{cases}$$
in Theorem \ref{mainthm1} and to the case $f=1$ in Theorem \ref{mainthm2}. In both cases we obtain that \eqref{bv1} holds.\smallskip

\begin{minipage}{8.4cm}
\begin{tikzpicture}[scale=0.9,xmin=-4.5,xmax=4.5,ymin=-1,ymax=5]
\axes \fenetre
\draw[domaine={-2.2}{2.5}, samples=62] plot (\x,{1.8*exp(-0.5*abs(\x))*(1.6+cos(100*(\x/(1+\x)))+0.2*sin(400*\x) )-0.5*exp(-0.5*(\x-1)^2)});
\draw[red,domaine={-4.2}{-2.2}, samples=40] plot (\x,{1.8*exp(-0.5*abs(\x))*(1.6+cos(100*(\x/(1+\x)))+0.2*sin(400*\x) )-0.5*exp(-0.5*(\x-1)^2)});
\draw[red,domaine={2.5}{4.2},  samples=40]  plot (\x,{1.8*exp(-0.5*abs(\x))*(1.6+cos(100*(\x/(1+\x)))+0.2*sin(400*\x) )-0.5*exp(-0.5*(\x-1)^2)});\draw[red,domaine={-2.2}{2.5}, samples=100] plot (\x,2);
\draw[red,dotted] (-2.2,2)--(-2.2,0.35);
\draw[red,dotted] (2.5,2)--(2.5,0.7);
\draw [above](0.2,2) node{$1$};
\draw [above](0.7,3) node{$g$};
\draw [above](2.15,2) node{$\bar\ro=1$};
\draw [above](3.75,0.7) node{$\bar\ro=g$};

\fill[color=gray!20, opacity=0.4]
(-4.2,0) -- (-4.2,0.5) 
-- plot[domaine={-4.2}{-2.2}]  (\x,{1.8*exp(-0.5*abs(\x))*(1.6+cos(100*(\x/(1+\x)))+0.2*sin(400*\x) )-0.5*exp(-0.5*(\x-1)^2)})
-- (-2.2,2) -- (2.5,2) -- plot[domaine={2.5}{4.2}]  (\x,{1.8*exp(-0.5*abs(\x))*(1.6+cos(100*(\x/(1+\x)))+0.2*sin(400*\x) )-0.5*exp(-0.5*(\x-1)^2)}) -- (4.2,0) -- cycle; 
\end{tikzpicture}
\end{minipage}
\begin{minipage}{7cm}
\smallskip

In dimension one the estimate \eqref{bv1} can be obtained by some direct considerations. In fact, by \cite{figa2} we have that the constraint $\bar\ro\le 1$ is saturated, i.e. the projection is of the form 
$$\bar\rho(x)=\begin{cases}1,\ \ \quad\text{if}\ x\in A,\\ g(x),\ \text{if}\ x\notin A,\end{cases}$$
for an open set $A\subset\R$. Since we are in dimension one, $A$ is a union of intervals and so it is sufficient to show that \eqref{bv1} holds in the case that $A$ is just one interval,  as in the picture on the left. In this case it is immediate to check that the total variation of $g$ has not increased after the projection since $\bar\ro=1$ on $A$, while there is necessarily a point $x_0\in A$ such that $g(x_0)\ge 1$.
\end{minipage}\medskip

In dimension $d\ge 2$ the estimate \eqref{bv1} is more involved essentially due to the fact that the projection tends to spread in all directions. This geometric phenomenon can be illustrated with the following simple example. Consider the function $g=(1+\ve)\ind_{B(0,R)}$, where $\ve>0$ and $R>0$ are such that $(1+\ve)|B(0,R)|=1$. By the saturation of the constraint and symmetry considerations the projection $\bar\ro$ of $g$ is the characteristic function $\bar\ro=\ind_{B(0,\bar R)}$, where $\bar R=(1+\ve)^{1/d}R$. The total variation involves two opposite effects: the perimeter of the ball increases, but the height of the jump passes from $1+\ve$ to $1$. In fact we have 
$$\int_{\R^d} |\nabla \bar\ro|=d\omega_d \bar R^{d-1}=d\omega_dR^{d-1} (1+\ve)^{(d-1)/d}\le d\omega_d R^{d-1} (1+\ve)=\int_{\R^d} |\nabla g|.$$
Further explicit examples are difficult to construct. Even in the case $g=(1+\ve)\ind_\Omega$, where $\Omega$ is a union of balls, it is not trivial to compute the \(BV\) norm of the projection, which is the characteristic function of a union of (overlapping) balls. 

\medskip

The BV estimates are useful when the projection is treated as one time-step of a discretized evolution process. For instance, a \(BV\) bound allows to transform weak convergence in the sense of measures into strong $L^1$ convergence (see Section 6.3). Also, if we consider a PDE mixing a smooth evolution, such as the Fokker-Planck evolution, and some projection steps (in order to impose a density constraint, as in crowd motion issues), one could wonder which bounds on the regularity of the solution are preserved in time. From the fact that the discontinuities in the projected measure destroy any kind of $W^{1,p}$ norm, it is natural to look for \(BV\) bounds.  Notice by the way that, for these kind of applications, proving $\int_{\Omega} |\nabla \bar\ro|\le \int_{\Omega} |\nabla g|$ (with no multiplicative coefficient nor additional term) is crucial in order to iterate this estimate at every  step.

\medskip

The paper is structured as follows: In Section \ref{2} we recall some preliminary results in optimal transportation, in Section \ref{3} we establish our \emph{main inequality}, in Section \ref{4}  we prove Theorem \ref{mainthm1} while in Section \ref{5} we collect  some properties of solution of \eqref{proj} which can be interesting in their own and we we prove   Theorem \ref{mainthm2}.  Eventually, in Section \ref{6} we present some applications of the above results, connections with other variational and evolution problems and some open questions. 
\medskip

\noindent {\bf Acknowledgments} The authors would like to thank te referee for a careful reading of the manuscript and for her/his comments. The second and third author gratefully acknowledge the support of the ANR project ANR-12-MONU-0013 ISOTACE.

\section{Notations and preliminaries}\label{2}

In this section we collect some facts about optimal transport that we will need in the sequel, referring  the reader to \cite{villani} for more details. We will denote by \(\mathcal P(\Omega)\) the set of probability measures in \(\Omega\) and by \(\mathcal P_2(\Omega)\) the subset of \(\mathcal P(\Omega)\) given by those with finite second moment (i.e. \(\mu\in \mathcal P_2(\Omega)\) if and only if \(\int|x|^2 \dd \mu<\infty\)). We will also use the spaces $\mathcal M(\Omega)$ of finite measures on $\Omega$ and \(L^1_+(\Omega)\) of non-negative functions in $L^1$. Notice $\{ f\in
L^1_+(\Omega)\,:\, \int f(x)\,\dd x= 1\}=L^1_+(\Omega)\cap \mathcal P(\Omega)$.
In the sequel we will {\em always} identify an absolutely continuous measure with its density (for instance writing \(T_\# f\) for \(T_\# (f \dd x)\) and so on..).

\begin{theorem}\label{struct}
Let $\Omega\subset\R^d$ be a given convex set and let $\ro,g\in L^1_+(\Omega)$ be two probability densities on $\Omega$. Then the following hold:
\begin{enumerate}
\item[(i)] The problem
\begin{equation}\label{kantorovich}
\frac12W_2^2(\ro,g):=
\min\left\{\int_{\Om\times\Om}\frac{1}{2} |x-y|^2\,\dd\gamma\;:\;\gamma\in\Pi(\ro ,g)\right\},
\end{equation}
where
$\Pi(\varrho ,g)$ is the set of  {\em transport plans}, i.e. $\Pi(\ro \dd x,g\dd x):=\{\gamma\in\pical(\Om\times\Om):\,(\pi^x)_{\#}\gamma=\ro ,\,(\pi^y)_{\#}\gamma=g\},$ has a unique solution, which is of the form $\gamma_{\hat T}:=(id,\hat T)_\#\ro$, and $\hat T:\Omega\to\Omega$ is a solution of the problem 
\begin{equation}\label{transomega}
\min_{T_\# \ro=g } \int_{\Omega}\frac{1}{2}|x-T(x)|^2\, \ro(x)\,\dd x\,.
\end{equation}
\item[(ii)] The map \(\hat T: \{\ro>0\}\to \{g>0\}\) is a.e. invertible and its inverse $\hat S:=\hat T^{-1}$ is a solution of the problem 
\begin{equation}\label{transomegaS}
\min_{S_\# g=\ro} \int_{\Omega}\frac{1}{2}|x-S(x)|^2\, g(x)\,\dd x. 
\end{equation}
\item[(iii)]  $W_2(\cdot,\cdot)$ is a distance on the space $\pical_2(\Omega)$ of probabilities over $\Omega$ with finite second moment.
\item[(iv)] We have 
\begin{equation}\label{duality}
\frac 12 W_2^2(\ro,g)=\max\left\{ \int_{\Omega}\varphi(x)\ro(x)\,\dd x+\int_\Omega \psi(y)g(y)\,\dd y\ :\  \varphi(x)+\psi(y)\le\frac12 |x-y|^2,\ \forall x, y\in\Omega\right\}.
\end{equation}
\item[(v)]
The optimal functions $\hat\varphi,\hat\psi$ in \eqref{duality} are continuous, differentiable almost everywhere, Lipschitz if $\Omega$ is bounded, and such that:
\begin{itemize}
\item $\hat T(x)=x-\nabla\hat\varphi(x)\quad \hbox{and}\quad \hat S(x)=x-\nabla\hat\psi(x)\quad\hbox{for a.e.}\quad x\in\Omega;$ in particular, the gradients of the optimal functions are uniquely determined (even in case of non-uniqueness of $\hat\varphi$ and $\hat\psi$) a.e. on $\{\ro>0\}$ and $\{g>0\}$, respectively;
\item the functions
$$x\mapsto \frac{|x|^2}{2}-\hat\varphi(x)\quad\hbox{and}\quad x\mapsto \frac{|x|^2}{2}-\hat\psi(x),$$
are convex in $\Omega$ and hence $\hat\varphi$ and $\hat\psi$ are semi-concave;
\item $\ds\hat\varphi(x)=\min_{y\in\Omega}\left\{\frac12|x-y|^2-\hat\psi(y)\right\}\qquad\hbox{and}\qquad \hat\psi(y)=\min_{x\in\Omega}\left\{\frac12|x-y|^2-\hat\varphi(x)\right\};$
\item if we denote by $\chi^c$ the  $c-$transform of a function $\chi:\Omega\to\R$ defined through $\chi^c(y)=\inf_{x\in\Omega}\frac12 |x-y|^2-\chi(x)$, then the maximal value in \eqref{duality} is also equal to 
\begin{equation}\label{cconc}
\max\left\{ \int_{\Omega}\varphi(x)\ro(x)\,\dd x+\int_\Omega \varphi^c(y)g(y)\,\dd y,\;\varphi\in C^0(\Omega)\right\}
\end{equation}
and the optimal $\varphi$ is the same $\hat\varphi$ as above, and is such that $\hat\varphi=(\hat\varphi^c)^c$ a.e. on $\{\ro>0\}$.
\end{itemize}
\item[(vi)] If $g\in \pical_2(\Omega)$ is given, the functional $W:\mathcal \pical_2(\Omega)\to\R$ defined through
$$W(\ro)=\frac 12 W_2^2(\ro,g)=\max\left\{ \int_{\Omega}\varphi(x)\ro(x)\,\dd x+\int_\Omega \varphi^c(y)g(y)\,\dd y,\;\varphi\in C^0(\Omega)\right\}$$
is convex. Moreover, if $\{g>0\}$ is a connected open set we can choose a particular potential $\hat\varphi$, defined as
$$\hat\varphi(x)=\inf\left\{\frac12|x-y|^2-\hat\psi(y)\,:\,y\in \spt (g)\right\},$$
where $\hat\psi$ is the unique (up to additive constants) optimal function $\hat \psi$ in \eqref{duality} 
(i.e. $\hat\varphi$ is the $c-$transform of $\hat\psi$ computed on $\Omega\times\spt(g)$). With this choice, if $\chi=\tilde\ro-\ro$ is the difference between two probability measures, then we have
$$\lim_{\ve\to 0} \frac{W(\varrho+\ve\chi)-W(\varrho)}{\ve}=\int_\Om \hat\varphi\,\dd\chi.$$ As a consequence, $\hat \varphi$ is the first variation of $W$. 
\end{enumerate} 
\end{theorem}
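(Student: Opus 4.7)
Theorem \ref{struct} is a compendium of classical results in optimal transport with quadratic cost; in a full write-up one would ultimately point to \cite{villani}, but the plan below sketches how to assemble them. For part (i), I would apply the direct method to the Kantorovich relaxation \eqref{kantorovich} — tightness of $\Pi(\ro,g)$ and lower semicontinuity of the cost give existence — and then invoke Brenier's theorem: any optimal $\gamma$ is $c$-cyclically monotone, hence supported on the graph of the subdifferential of a convex function $u$, and since $\ro\ll \mathcal L^d$ this subdifferential is single-valued $\ro$-a.e., forcing $\gamma=(id,\nabla u)_\#\ro$ with $\hat T:=\nabla u$. Uniqueness of $\gamma$ among optimal plans then follows from strict convexity along pointwise averages of Monge maps. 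For (ii), applying (i) with the roles of $\ro$ and $g$ exchanged yields an optimal $\hat S:\{g>0\}\to \{\ro>0\}$; uniqueness of the optimal coupling forces $(\hat S,id)_\# g=(id,\hat T)_\#\ro$ and hence $\hat S\circ \hat T=id$ $\ro$-a.e. Item (iii) is the standard gluing lemma/Minkowski argument on a three-marginal coupling.

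For (iv) I would invoke Kantorovich duality via Fenchel–Rockafellar; attainment in the sup follows by restricting to $c$-concave functions modulo additive constants, which form a set compact in $C^0_{\rm loc}$ after normalization, thanks to the semi-concavity bound. Point (v) hinges on the identity: a function $\varphi$ is $c$-concave for the quadratic cost if and only if $x\mapsto |x|^2/2-\varphi(x)$ is convex. This immediately yields the convexity and semi-concavity statements, the a.e. differentiability, and the Lipschitz bound on bounded $\Om$. For the formula $\hat T(x)=x-\nabla\hat\varphi(x)$, the key step is that the minimum in $\hat\varphi(x)=\min_y\{\frac12|x-y|^2-\hat\psi(y)\}$ is attained at $y=\hat T(x)$, so at any point of differentiability of $\hat\varphi$ the envelope theorem gives $\nabla\hat\varphi(x)=x-\hat T(x)$ a.e. on $\{\ro>0\}$, and symmetrically on $\{g>0\}$.

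Finally, (vi): the convexity of $W$ is immediate from \eqref{cconc}, where $W$ appears as the supremum of functionals affine in $\ro$. The first-variation formula is the subtle part. The inequality $W(\ro+\ve\chi)-W(\ro)\ge \ve \int\hat\varphi\,\dd\chi$ comes from plugging the admissible pair $(\hat\varphi,\hat\varphi^c)$ into the dual problem for $\ro+\ve\chi$. For the reverse inequality, I would take optimal potentials $\hat\varphi_\ve$ for $\ro+\ve\chi$, normalize them (e.g.\ by fixing their value at one point of $\spt(g)$), and extract a subsequential limit uniform on compacts thanks to the uniform semi-concavity bound. The hypothesis that $\{g>0\}$ is connected makes $\hat\psi$ unique up to an additive constant, so its $c$-transform — the specific $\hat\varphi$ singled out in the statement — is the only possible cluster point, and passing to the limit gives $\int\hat\varphi\,\dd\chi$. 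The main obstacle I expect is precisely this last step, namely identifying the limit of the perturbed potentials with the distinguished $\hat\varphi$ rather than with any other optimal potential differing by a constant on components of $\{g>0\}$; the canonical choice via $c$-transform from $\spt(g)$ together with connectedness is exactly what makes the identification unambiguous.
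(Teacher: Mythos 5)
Your proposal is correct and matches the paper's treatment: the paper itself gives no proof of Theorem \ref{struct}, deferring points (i)--(v) to the classical literature (\cite{villani}, Brenier's theorem, Kantorovich duality, $c$-concavity) exactly as you assemble them, and for the only non-standard point (vi) it invokes precisely the argument you sketch --- convexity from the dual representation \eqref{cconc}, one inequality by plugging $(\hat\varphi,\hat\varphi^c)$ into the dual problem for $\ro+\ve\chi$, and the reverse by passing to the limit in the perturbed potentials, with connectedness of $\{g>0\}$ pinning down $\hat\psi$ up to a constant (which is harmless since $\int\dd\chi=0$) and hence singling out the distinguished $\hat\varphi$.
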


The only non-standard point is the last one (the computation of the first variation of $W$): it is sketched in \cite{buttazzo}, and a more detailed presentation will be part of \cite{OTAM} (Section 7.2). Uniqueness of $\hat \psi$ on $\spt(g)$ is obtained from the uniqueness of its gradient and the connectedness of  $\{g>0\}$. \smallskip

We also need some regularity results on optimal transport maps, see \cite{caffarelli,caffarelli2}.

\begin{theorem}\label{regul}
Let $\Omega\subset\R^d$ be a bounded uniformly convex set with smooth boundary and let $\ro,g\in L^1_+(\Omega)$ be two probability densities on $\Omega$ away from zero and infinity\footnote{We say that $\ro$ and $g$ are away from zero and infinity if there is some $\eps>0$ such that $\eps\le \ro\le 1/\eps$ and $\eps\le g\le 1/\eps$ a.e. in $\Omega$.}. Then, using the notations from Theorem \ref{struct}, we have:
\begin{enumerate}
\item[(i)] $\hat T\in C^{0,\alpha}(\overline \Omega)$ and $\hat S\in C^{0,\alpha}(\overline\Omega)$.
\item[(ii)] If $\ro\in C^{k,\beta}(\overline\Omega)$ and $g\in C^{k,\beta}(\overline\Omega)$, then $\hat T\in C^{k+1,\beta}(\overline\Omega)$ and $\hat S\in C^{k+1,\beta}(\overline\Omega)$.
\end{enumerate}
\end{theorem}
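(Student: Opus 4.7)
The plan is to derive both statements from Caffarelli's regularity theory for the Monge--Amp\`ere equation applied to the Brenier potential. By Theorem \ref{struct}(v) we may write $\hat T = \nabla u$ where $u(x)=|x|^2/2-\hat\varphi(x)$ is a convex function on $\Omega$, and similarly $\hat S=\nabla v$ for $v(x)=|x|^2/2-\hat\psi(x)$. Since $\hat T_\#\ro=g$ and $\hat T=\nabla u$, the convex function $u$ solves the second boundary value problem for the Monge--Amp\`ere equation,
\begin{equation*}
\det D^2 u(x) \;=\; \frac{\ro(x)}{g(\nabla u(x))} \quad \text{in } \Omega,
\qquad \nabla u(\Omega)=\Omega,
\end{equation*}
in the Brenier/Alexandrov sense, and analogously for $v$. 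Under the hypothesis that $\ro,g$ are bounded away from $0$ and $\infty$, the right hand side is bounded away from $0$ and $\infty$ as well, so we are in the setting of Caffarelli's theory for Monge--Amp\`ere with bounded, uniformly positive data.

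For part (i), I would first invoke Caffarelli's interior regularity results: on a convex target (here $\Omega$ itself is convex), any Brenier solution with $\ro,g$ bounded away from $0,\infty$ is strictly convex and of class $C^{1,\alpha}_{\rm loc}(\Omega)$, which yields $\hat T=\nabla u\in C^{0,\alpha}_{\rm loc}(\Omega)$. To reach $C^{0,\alpha}(\overline\Omega)$ one needs the boundary counterpart, which is exactly the reason we assume $\Omega$ is uniformly convex with smooth boundary: under these assumptions Caffarelli's boundary regularity for the second boundary value problem gives strict convexity of $u$ up to $\pa\Omega$ and $u\in C^{1,\alpha}(\overline\Omega)$, whence $\hat T\in C^{0,\alpha}(\overline\Omega)$. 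Since the problem is symmetric in $(\ro,g)$ and $\hat S$ is the optimal map from $g$ to $\ro$, exactly the same argument applied to $v$ gives $\hat S\in C^{0,\alpha}(\overline\Omega)$.

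For part (ii), once we know $u\in C^{1,\alpha}(\overline\Omega)$ and is uniformly strictly convex (so that $D^2 u$ is locally well defined and the Monge--Amp\`ere operator linearizes to a uniformly elliptic one with H\"older coefficients), the equation becomes a fully nonlinear uniformly elliptic PDE with concave structure. One then bootstraps: Evans--Krylov (or Caffarelli's Schauder-type estimates for Monge--Amp\`ere) upgrades $u$ to $C^{2,\alpha}$, and thereafter successive differentiation of the equation, combined with classical Schauder estimates applied to the linearized problem and the corresponding boundary Schauder theory (using the smoothness of $\pa\Omega$), yields $u\in C^{k+2,\beta}(\overline\Omega)$ whenever $\ro,g\in C^{k,\beta}(\overline\Omega)$. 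Differentiating once produces $\hat T=\nabla u\in C^{k+1,\beta}(\overline\Omega)$, and again the roles of $\ro$ and $g$ are symmetric so the same conclusion holds for $\hat S=\nabla v$.

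The genuinely hard step is the boundary analysis in part (i): the interior strict convexity and $C^{1,\alpha}$ estimate of Caffarelli rely on a delicate barrier/section analysis that breaks down at $\pa\Omega$, and it is precisely the uniform convexity of $\Omega$ (both as source and target) that allows one to construct the supporting paraboloids needed to propagate strict convexity and the $C^{1,\alpha}$ bound up to the boundary. Since the present paper cites \cite{caffarelli,caffarelli2}, I would quote these deep results as black boxes rather than reproducing their proofs; the only work left is to check that our set--up (convex domain coinciding with the target, densities bounded away from $0$ and $\infty$, smooth uniformly convex boundary) falls precisely within their hypotheses, after which (i) and (ii) follow immediately.
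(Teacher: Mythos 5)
Your proposal is correct and follows exactly the route the paper intends: the paper gives no proof of this theorem, simply citing Caffarelli's interior and boundary regularity theory for the Monge--Amp\`ere equation \cite{caffarelli,caffarelli2}, which is precisely the black-box argument you describe (Brenier potential solving the second boundary value problem, uniform convexity and smoothness of $\pa\Omega$ ensuring the boundary estimates apply, and Schauder bootstrapping for part (ii)). Your write-up is a faithful and accurate expansion of that citation.
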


Most of our proofs will be done by approximation. To do this, we need a stability result

\begin{theorem}\label{approx}
Let $\Omega\subset\R^d$ be a bounded convex set and let $\ro_n\in L^1_+(\Omega)$ and  $g_n\in L^1_+(\Omega)$ be two sequences of probability densities in $\Omega$. Then, using the notations from Theorem \ref{struct}, if $\ro_n\rightharpoonup \ro$ and $g_n\rightharpoonup g$ weakly as measures, then we have:
\begin{enumerate}
\item[(i)] $W_2(\ro,g)=\lim_{n\to\infty}W_2(\ro_n,g_n).$
\item[(ii)] there exist two semi-concave functions $\varphi,\psi$ such that $\nabla \hat\varphi_n\to\nabla\varphi\qquad\hbox{and}\qquad \nabla \hat\psi_n\to \nabla \psi$ a.e. and $\nabla\varphi=\nabla\hat\varphi$ a.e. on $\{\ro>0\}$ and  $\nabla\psi=\nabla\hat\psi$ a.e. on $\{g>0\}$.

\end{enumerate} 
If $\Omega$ is unbounded (for instance $\Omega=\R^d$), then the convergence $\ro_n\rightharpoonup \ro$ and $g_n\rightharpoonup g$ weakly as measures is not enough to guarantee  {\rm (i)} but only implies $W_2(\ro,g)\leq\liminf_{n\to\infty}W_2(\ro_n,g_n).$ Yet, {\rm (i)} is satisfied if $W_2(\ro_n,\ro),W_2(g_n,g)\to 0$, which is a stronger condition. 
\end{theorem}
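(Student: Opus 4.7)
The plan is to reduce everything in the bounded case to the duality formula \eqref{duality} combined with compactness estimates on the Kantorovich potentials (the Lipschitz and semi-concavity bounds from Theorem \ref{struct}(v)), and then to handle the unbounded statement via an approximation/triangle-inequality argument. For (i) on a bounded convex $\Omega$, I would first exploit that the cost $(x,y)\mapsto \tfrac12|x-y|^2$ is continuous and bounded on $\overline\Omega\times\overline\Omega$. Taking optimal plans $\gamma_n\in\Pi(\ro_n,g_n)$ from Theorem \ref{struct}(i), a tightness argument (their supports lie in the compact set $\overline\Omega\times\overline\Omega$) extracts a subsequential narrow limit $\gamma\in\Pi(\ro,g)$, with $\int\tfrac12|x-y|^2\,\dd\gamma_n\to\int\tfrac12|x-y|^2\,\dd\gamma\ge \tfrac12 W_2^2(\ro,g)$, giving $\liminf W_2^2(\ro_n,g_n)\ge W_2^2(\ro,g)$. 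For the matching upper bound I would invoke that on a bounded domain narrow convergence of probability measures is equivalent to $W_2$-convergence (e.g.\ because $W_1$ metrizes narrow convergence via Kantorovich--Rubinstein and $W_2^2\le\diam(\Omega)\,W_1$); this gives $W_2(\ro_n,\ro),W_2(g_n,g)\to 0$, and the triangle inequality then yields $W_2(\ro_n,g_n)\to W_2(\ro,g)$, completing (i).

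For (ii), I would select optimal potentials $(\hat\varphi_n,\hat\psi_n)$ in \eqref{duality} for $(\ro_n,g_n)$, normalized by $\hat\varphi_n(x_0)=0$ at a fixed $x_0\in\Omega$. By Theorem \ref{struct}(v) these are uniformly Lipschitz with constant bounded by $\diam\Omega$ and uniformly semi-concave (the maps $|x|^2/2-\hat\varphi_n$ are convex). Arzel\`a--Ascoli then provides a subsequence with $\hat\varphi_n\to\varphi$ and $\hat\psi_n\to\psi$ uniformly, the limits inheriting the same bounds and the constraint $\varphi(x)+\psi(y)\le\tfrac12|x-y|^2$. Passing to the limit in the duality equality using uniform convergence of the potentials against the narrowly convergent $\ro_n,g_n$ identifies $(\varphi,\psi)$ as an optimal couple in \eqref{duality} for $(\ro,g)$. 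The classical convex-analytic fact that uniform convergence of convex functions implies pointwise convergence of their gradients at every differentiability point of the limit (applied to $|x|^2/2-\hat\varphi_n$) then delivers $\nabla\hat\varphi_n\to\nabla\varphi$ a.e.; an analogous argument handles $\psi$. The identification $\nabla\varphi=\nabla\hat\varphi$ a.e.\ on $\{\ro>0\}$ (and the analogue for $\psi$) follows because Theorem \ref{struct}(v) uniquely determines the gradient of any optimal potential a.e.\ on the support of the corresponding marginal.

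For the unbounded case, lower semicontinuity of $W_2$ under narrow convergence is the classical lower semicontinuity of optimal transport costs with lower semicontinuous cost, which I would obtain by testing against bounded continuous $c$-concave pairs (for instance truncations of the Kantorovich potentials) and taking the sup. Equality may fail because narrow convergence does not control the second moment at infinity, but under the strengthened hypothesis $W_2(\ro_n,\ro),W_2(g_n,g)\to 0$ the triangle inequality $|W_2(\ro_n,g_n)-W_2(\ro,g)|\le W_2(\ro_n,\ro)+W_2(g_n,g)$ immediately yields continuity. The most delicate step of the whole argument, and the main obstacle to watch, is the upgrade from uniform convergence of the uniformly semi-concave potentials to a.e.\ pointwise convergence of their gradients: this rests on the theory of convex functions and is the only point in the proof where semi-concavity is genuinely used.
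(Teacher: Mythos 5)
Your argument is correct and follows essentially the same route as the paper: for (ii) the paper likewise normalizes the potentials, applies Arzel\`a--Ascoli, identifies the limit as optimal via the duality formulation, and then deduces a.e.\ convergence of the gradients from local uniform convergence of the convex functions $x\mapsto |x|^2/2-\hat\varphi_n(x)$ (the paper proves this last convex-analytic step by hand with a short contradiction argument, where you cite it as classical), while part (i) and the lower semicontinuity in the unbounded case are simply referred to \cite{villani}. The only shared imprecision is that both you and the paper obtain the limit potentials along a subsequence, whereas off the sets $\{\ro>0\}$ and $\{g>0\}$ the full-sequence convergence asserted in the statement is not literally justified; this does not affect any later use of the result.
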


\begin{proof}
The proof of {\it (i)} can be found in \cite{villani}. We prove {\it (ii)}. (Actually this is a consequence of the Theorem 3.3.3. from \cite{cannarsa}, but for the sake of completeness we sketch its simple  proof).

We first note that due to Theorem \ref{struct} {\it (v)} the sequences $\hat\varphi_n$ and $\hat\psi_n$ are equi-continuous. Moreover, since the Kantorovich potentials are uniquely determined up to a constant we may suppose that there is $x_0\in\Omega$ such that $\hat\varphi_n(x_0)=\hat\psi_n(x_0)=0$ for every $n\in\N$. Thus, $\hat\varphi_n$ and $\hat\psi_n$ are locally uniformly bounded in $\Omega$ and, by the Ascoli-Arzel\`a Theorem, they converge uniformly up to a subsequence 
$$\hat\varphi_n\xrightarrow[n\to\infty]{} \varphi_\infty\qquad\hbox{and}\qquad\hat\psi_n\xrightarrow[n\to\infty]{} \psi_\infty,$$
to some continuous functions $\varphi_\infty,\psi_\infty\in C(\Omega)$, satisfying 
$$\varphi_\infty(x)+\psi_\infty(y)\le\frac12 |x-y|^2,\quad\hbox{for every}\quad x,y\in\Omega.$$

In order to show that $\varphi_\infty$ and $\psi_\infty$ are precisely Kantorovich potentials, we use the characterization of the potentials as solutions to the problem \eqref{duality}. Indeed, let $\varphi$ and $\psi$ be such that $\varphi(x)+\psi(y)\le\frac12 |x-y|^2$ for every $ x,y\in\Omega$. Then, for every $n\in\N$ we have 
\begin{equation*}
\int_{\Omega}\hat\varphi_n(x)\ro_n(x)\,\dd x+\int_\Omega \hat\psi_n(y)g_n(y)\,\dd y\ge \int_{\Omega}\varphi(x)\ro_n(x)\,\dd x+\int_\Omega \psi(y)g_n(y)\,\dd y, 
\end{equation*}
and passing to the limit we obtain
\begin{equation*}
\int_{\Omega}\varphi_\infty(x)\ro(x)\,\dd x+\int_\Omega \psi_\infty(y)g(y)\,\dd y\ge \int_{\Omega}\varphi(x)\ro(x)\,\dd x+\int_\Omega \psi(y)g(y)\,\dd y, 
\end{equation*}
which proves that $\varphi_\infty$ and $\psi_\infty$ are optimal. In particular, the gradient of these functions coincide with those of $\hat\varphi$ and $\hat\psi$ on the sets where the densities are strictly positive.

We now prove that $\nabla\hat\varphi_n\to \nabla\varphi_\infty$ a.e. in $\Omega$. We denote with $\mathcal{N}\subset\Omega$ the set of points $x\in\Omega$, such that there is a function among $\hat\varphi$ and $\hat\varphi_n$, for $n\in\N$, which is not differentiable at $x$. We note that by Theorem \ref{struct} {\it (v)} the set $\mathcal{N}$ has Lebesgue measure zero. Let now $x_0\in \Omega\setminus \mathcal{N}$ and suppose, without loss of generality, $x_0=0$. Setting 
$$\alpha_n(x):=\frac{|x|^2}{2}-\hat\varphi_n(x)+\hat\varphi_n(0)+x\cdot \nabla\varphi_\infty(0)\quad\hbox{and}\quad \alpha(x):=\frac{|x|^2}{2}-\varphi_\infty(x)+\varphi_\infty(0)+x\cdot \nabla\varphi_\infty(0),$$ 
we have that $\alpha_n$ are all convex and such that $\alpha_n(0)=0$, and hence $\alpha_n(x)\geq \nabla\alpha_n(0)\cdot x$. Moreover, $\alpha_n\to\alpha$ locally uniformly and $\nabla\alpha(0)=0$. Suppose by contradiction that $\lim_{n\to\infty}\nabla \alpha_n(0)\neq 0$. Then, there is a unit vector $p\in\R^d$ and a constant $\delta>0$ such that, up to a subsequence, $p\cdot\nabla \alpha_n\ge \delta$ for every $n>0$. Then, for every $t>0$ we have 
$$\frac{\alpha(pt)}{t}=\lim_{n\to\infty}\frac{\alpha_n(pt)}t\ge\liminf_{n\to\infty} \big\{p\cdot\nabla\alpha_n(0)\big\}\ge\delta,$$
which is a contradiction with the fact that $\nabla\alpha(0)=0$.
\end{proof}

In order to handle our approximation procedures, we also need to spend some words on the notion of $\Gamma-convergence$ (see \cite{introgammaconve}).

\begin{definition}
On a metric space $X$ let $F_n:X\to\R\cup\{+\infty\}$ be a sequence of functions. We define the two lower-semicontinuous functions $F^-$ and $F^+$ (called $\Gamma-\liminf$ and $\Gamma-\limsup$ of this sequence, respectively) by
\begin{gather*}
F^-(x):=\inf\{\liminf_{n\to\infty} F_n(x_n)\;:\;x_n\to x\},\\
 F^+(x):=\inf\{\limsup_{n\to\infty} F_n(x_n)\;:\;x_n\to x\}.
 \end{gather*}
Should $F^-$ and $F^+$ coincide, then we say that $F_n$ actually $\Gamma-$converges to the common value $F=F^-=F^+$. 
\end{definition}
This means that, when one wants to prove $\Gamma-$convergence of $F_n$ towards a given functional $F$, one has actually to prove two distinct facts: first we need $F^-\geq F$ (this is called $\Gamma-$liminf inequality, i.e. we need to prove $\liminf_n F_n(x_n)\geq F(x)$ for any approximating sequence $x_n\to x$) 
and then $F^+\leq F$ (this is called $\Gamma-$limsup inequality, i.e. we need to find a {\em recovery sequence} $x_n\to x$ such that $\limsup_n F_n(x_n)\leq F(x)$).  

The definition of $\Gamma-$convergence for a continuous parameter $\ve\to 0$ obviously passes through the convergence to the same limit for any subsequence $\ve_n\to 0$.

Among the properties of $\Gamma-$convergence we have the following:
\begin{itemize}
\item if there exists a compact set $K\subset X$ such that $\inf_X F_n=\inf_K F_n$ for any $n$, then $F$ attains its infimum and $\inf F_n\to \min F$,
\item if $(x_n)_n$ is a sequence of minimizers for  $F_n$ admitting a subsequence converging to $x$, then $x$ minimizes $F$ (in particular, if $F$ has a unique minimizer $x$ and the sequence of minimizers $x_n$ is compact, then $x_n\to x$),
\item if $F_n$ is a sequence $\Gamma-$converging to $F$, then $F_n+G$ will $\Gamma-$converge to $F+G$ for any continuous function $G:X\to\R\cup\{+\infty\}$.
\end{itemize}

In the sequel we will need the following  two easy criteria to guarantee $\Gamma-$convergence.
\begin{proposition}
If each $F_n$ is l.s.c. and $F_n\to F$ uniformly, then $F_n$  $\Gamma-$converges to $F$.

If each $F_n$ is l.s.c., $F_n\leq F_{n+1}$ and $F(x)=\lim_n F_n(x)$ for all $x$,  then $F_n$  $\Gamma-$converges to $F$.
\end{proposition}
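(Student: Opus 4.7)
The plan is to verify the two inequalities defining $\Gamma$-convergence, namely $F^-\geq F$ and $F^+\leq F$, in each of the two situations. In both cases, the $\Gamma$-$\limsup$ inequality is handled at once by the constant recovery sequence $x_n\equiv x$: it suffices to observe that $F_n(x)\to F(x)$ pointwise, which is contained in each of the two hypotheses (uniform convergence implies pointwise convergence in the first case, and the monotone limit assumption is the second case directly). So the whole substance lies in the $\Gamma$-$\liminf$ inequality.

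For the first criterion (uniform convergence), I would first note that the uniform limit of l.s.c. functions is l.s.c., so $F$ itself is l.s.c. (which is in any case automatic for the $\Gamma$-limit). Then, fixing $\varepsilon>0$, uniform convergence yields an index $N$ with $F_n\geq F-\varepsilon$ on the whole space for all $n\geq N$. Given an arbitrary sequence $x_n\to x$, this gives
\[
\liminf_{n\to\infty} F_n(x_n)\;\geq\;\liminf_{n\to\infty}\bigl(F(x_n)-\varepsilon\bigr)\;\geq\;F(x)-\varepsilon,
\]
using lower semicontinuity of $F$ in the last step. Letting $\varepsilon\to 0$ closes the argument.

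For the second criterion (monotone nondecreasing sequence with pointwise limit), I would exploit monotonicity to decouple the two limits in $m$ and $n$. For a fixed $m$ and every $n\geq m$ the monotonicity gives $F_n(x_n)\geq F_m(x_n)$; taking $\liminf$ in $n$ and using the l.s.c. of $F_m$ yields
\[
\liminf_{n\to\infty}F_n(x_n)\;\geq\;\liminf_{n\to\infty}F_m(x_n)\;\geq\;F_m(x).
\]
Since the left-hand side does not depend on $m$, I can pass to the supremum in $m$ on the right, and monotonicity together with the assumption $F(x)=\lim_m F_m(x)=\sup_m F_m(x)$ yields $\liminf_n F_n(x_n)\geq F(x)$.

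The only potential subtlety will be bookkeeping with possibly infinite values of $F_n$ and $F$ (the definition of uniform convergence needs a sensible interpretation when some values equal $+\infty$), but in both arguments the inequalities survive the relevant limits, so no real obstacle is expected; there is no delicate construction of a nontrivial recovery sequence here, which is typically the hard part of $\Gamma$-convergence proofs.
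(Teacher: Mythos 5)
Your proof is correct. The paper states these two criteria without proof, treating them as classical facts (they appear in Dal Maso's monograph cited as \cite{introgammaconve}), so there is no argument in the text to compare against; what you give is the standard proof. Both halves are sound: the constant recovery sequence settles the $\Gamma$-$\limsup$ inequality since $F_n(x)\to F(x)$ pointwise in either hypothesis, the uniform case of the $\Gamma$-$\liminf$ follows from $F_n\ge F-\varepsilon$ for large $n$ plus lower semicontinuity of the uniform limit $F$, and the monotone case follows from freezing the index $m$, using $F_n\ge F_m$ for $n\ge m$ and the lower semicontinuity of $F_m$, then taking the supremum over $m$. Your closing caveat about $+\infty$ values is the only point needing care (uniform convergence must be interpreted suitably on $\{F=+\infty\}$), and as you note the inequalities used are insensitive to it.
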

We will essentially  apply the notion of $\Gamma-$convergence in the space $X=\pical(\Omega)$ endowed with the weak convergence\footnote{We say that a family of probability measure \(\mu_n\) weakly converges to a probability measure \(\mu\) in \(\Omega\)  if \(\int \varphi\,\dd\mu_n\to \int\varphi\,\dd \mu\) for all $ \varphi\in C_b(\Omega)$, where \(C_b(\Omega)\) is the space of continuous and bounded functions on \(\Omega\).} (which is indeed metrizable on this bounded subset of the Banach space of measures) since if, instead, we endowed the space $\pical_2(\Om)$ with the $W_2$ convergence, then we would lack compactness whenever $\Om$ is not compact itself.
\bigskip

We conclude this section with the following simple lemma concerning properties of the functional 
\[
\mathcal M(\Omega)\ni \varrho \mapsto H(\varrho)=\begin{cases}\int_\Omega h(\varrho(x))\, \dd x,&\mbox{ if }\varrho\ll\dd x,\\
													+\infty,&\mbox{ otherwise.}\end{cases}
\]

\begin{lemma}\label{lemma:h}Let \(\Omega\) be an open set and  \(h:\R\to \mathbb R\cup \{+\infty\}\) be convex, l.s.c. and superlinear at $+\infty$, then the functional \(H: \mathcal M(\Omega)\to  \mathbb R\cup \{+\infty\}\)  is convex and lower semicontinuous with respect to the weak convergence of measures. Moreover if \(h\in C^1\) then we have
$$\lim_{\ve\to 0} \frac{H(\varrho+\ve\chi)-H(\varrho)}{\ve}=\int h'(\varrho)\,\dd\chi$$
whenever $\rho,\chi\ll \dd x$,  $H(\varrho)<+\infty$ and $H(\varrho+\ve\chi)<+\infty$ at least for small $\ve$. As a consequence, $h'(\varrho)$ is the first variation of $H$. 
\end{lemma}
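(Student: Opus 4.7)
The plan is to handle the three assertions separately. Convexity is immediate from pointwise convexity of $h$: if $\varrho_0, \varrho_1 \ll dx$, then $\lambda \varrho_0 + (1-\lambda)\varrho_1$ is absolutely continuous with density the convex combination of the densities, and applying $h$ pointwise and integrating yields $H(\lambda\varrho_0 + (1-\lambda)\varrho_1) \leq \lambda H(\varrho_0) + (1-\lambda) H(\varrho_1)$; if either $\varrho_i$ is not absolutely continuous, the right-hand side equals $+\infty$ by convention and the inequality holds trivially.

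For lower semicontinuity I would invoke the Legendre-Fenchel duality representation
\[
H(\varrho) = \sup\left\{ \int_\Omega \phi\,d\varrho - \int_\Omega h^*(\phi)\,dx \ :\ \phi \in C_c(\Omega) \right\},
\]
where $h^*(p) = \sup_r (pr - h(r))$ is the Legendre conjugate. Superlinearity of $h$ at $+\infty$ is precisely the hypothesis that makes $h^*$ everywhere finite (and locally bounded) on $\R$; after the harmless normalization $\min h = 0$ (so that $h^*(0) = 0$), the integral $\int h^*(\phi)\,dx$ is well defined and finite for every $\phi \in C_c(\Omega)$. The $\geq$ direction of the representation follows by integrating the pointwise Young inequality $\phi\varrho - h^*(\phi) \leq h(\varrho)$; for $\varrho \ll dx$ the $\leq$ direction is obtained by saturating Young's inequality through approximation of $h'(\varrho)$ by compactly supported continuous test functions, while for $\varrho$ with a nontrivial singular part the use of test functions concentrated on the singular support, together with superlinearity, yields $\sup = +\infty$, consistent with $H(\varrho) = +\infty$. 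Weak continuity of each linear functional $\varrho \mapsto \int \phi\,d\varrho - \int h^*(\phi)\,dx$ (thanks to $\phi \in C_c$) then exhibits $H$ as a supremum of continuous functionals on $\mathcal M(\Omega)$, hence lower semicontinuous.

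For the first-variation formula, assume $h \in C^1$ and fix $\varepsilon_0 > 0$ such that $H(\varrho + \varepsilon\chi) < +\infty$ for $|\varepsilon| < \varepsilon_0$. Pointwise a.e., $\varepsilon^{-1}[h(\varrho + \varepsilon\chi) - h(\varrho)] \to h'(\varrho)\chi$ as $\varepsilon \to 0$. By convexity of $\varepsilon \mapsto h(\varrho(x) + \varepsilon \chi(x))$, the difference quotients are monotone in $\varepsilon$; the standard convex-combination manipulation (writing $\varrho + \varepsilon\chi$ as a convex combination of $\varrho$ and $\varrho + \varepsilon_0\chi$, and $\varrho$ itself as a convex combination of $\varrho + \varepsilon\chi$ and $\varrho - \varepsilon_0'\chi$ for some auxiliary $\varepsilon_0' \in (0,\varepsilon_0)$) yields the integrable two-sided bounds
\[
\frac{h(\varrho) - h(\varrho - \varepsilon_0'\chi)}{\varepsilon_0'} \leq \frac{h(\varrho + \varepsilon\chi) - h(\varrho)}{\varepsilon} \leq \frac{h(\varrho + \varepsilon_0\chi) - h(\varrho)}{\varepsilon_0}
\]
for $\varepsilon \in (0,\varepsilon_0)$, with symmetric bounds for $\varepsilon < 0$. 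Dominated convergence then delivers $\varepsilon^{-1}[H(\varrho + \varepsilon\chi) - H(\varrho)] \to \int h'(\varrho)\,d\chi$.

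The main obstacle is the lower semicontinuity step, specifically checking that the duality representation is sharp on singular measures: proving that the supremum is $+\infty$ whenever $\varrho$ fails to be absolutely continuous is exactly where the superlinearity hypothesis is unavoidable, since it is what permits test functions concentrating on the singular support to drive the supremum to $+\infty$. Once this representation is in place, both l.s.c.\ and the first-variation formula reduce to routine convexity and dominated-convergence arguments.
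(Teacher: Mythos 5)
The paper does not actually prove this lemma: it declares the fact classical and refers to Buttazzo's book and to Bouchitt\'e--Buttazzo for the semicontinuity, and the first-variation formula is likewise delegated to the references (it is re-used in Corollary \ref{coro:h and W} and in \cite{OTAM}). Your write-up reconstructs precisely the standard argument contained in those references: pointwise convexity of $h$ for the convexity of $H$, the Fenchel duality representation $H(\varrho)=\sup_\phi\big(\int\phi\,\dd\varrho-\int h^*(\phi)\,\dd x\big)$ exhibiting $H$ as a supremum of weakly continuous functionals, and monotonicity of convex difference quotients for the derivative. So the route is the expected one and the structure is sound; the one step that carries the real content is the sharpness of the duality formula on measures with a nontrivial singular part, which you only sketch --- that is exactly the theorem being cited, so a referee would want either the details or the citation there.

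Two smaller points need attention. First, superlinearity of $h$ at $+\infty$ alone does not make $h^*$ finite on all of $\R$ if $h$ is genuinely defined on the whole line (a convex $h$ with bounded slope at $-\infty$ has $h^*=+\infty$ on a half-line); what saves the representation in this paper is that $h$ is effectively a function on $\R_+$ extended by $+\infty$ on the negatives and the relevant measures are nonnegative, and you should say so explicitly. Second, your dominated-convergence argument for the first variation uses the lower bound $\big(h(\varrho)-h(\varrho-\varepsilon_0'\chi)\big)/\varepsilon_0'$, which requires $H(\varrho-\varepsilon_0'\chi)<+\infty$, i.e.\ finiteness for small \emph{negative} $\varepsilon$; in the intended application ($\chi=\tilde\varrho-\bar\varrho$ with both probabilities and $h=+\infty$ on negative densities) this fails, and the limit is really one-sided, $\varepsilon\to0^+$. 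The cleaner argument avoids the issue: by convexity the difference quotients decrease as $\varepsilon\downarrow0$ and are bounded above by the integrable quotient at $\varepsilon_0$, so monotone convergence gives $\varepsilon^{-1}\big(H(\varrho+\varepsilon\chi)-H(\varrho)\big)\to\int h'(\varrho)\,\dd\chi$ with no two-sided hypothesis.
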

For this classical fact, and in particular for the semicontinuity, we refer to \cite{But book} and \cite{boubu}.

We also use this lemma, together with point (vi) in Theorem \ref{struct} to deduce the following optimality conditions.
\begin{corollary}\label{coro:h and W}
Let \(\Omega\) be a bounded open set, $g\in L^1_+(\Om)$ an absolutely continuous and strictly positive probability density on $\Om$, the potential $\hat\varphi$ and the functional $W$ defined as in point (vi) in Theorem \ref{struct}. Let \(h:\R\to \mathbb R\) be a $C^1$ convex and superlinear function, and let \(H: \mathcal M(\Omega)\to  \mathbb R\cup \{+\infty\}\) be defined as above. Suppose that $\bar\ro$ solves the minimization problem
$$\min\{W(\ro)+H(\ro)\;:\;\ro\in\pical(\Om)\}.$$
Then there exists a constant $C$ such that 
$$h'(\bar\ro)=\max\{(C-\hat\varphi),h'(0)\}.$$\end{corollary}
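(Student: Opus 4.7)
The plan is to write Euler--Lagrange conditions for the constrained minimization. By Theorem \ref{struct}(vi), $W$ is differentiable at $\bar\ro$ in the sense that its first variation is $\hat\varphi$; by Lemma \ref{lemma:h}, the first variation of $H$ is $h'(\bar\ro)$ (the finite-valuedness of $h$ together with superlinearity and the boundedness of the admissible perturbations below will guarantee that $H(\bar\ro+\ve\chi)<+\infty$ for $\ve$ small). The only constraint on $\ro$ is the mass constraint $\int\ro=1$ together with $\ro\ge 0$, so the expected form of the Euler--Lagrange equation is
\[
\hat\varphi+h'(\bar\ro)=C\text{ a.e. on }\{\bar\ro>0\},\qquad \hat\varphi+h'(0)\ge C\text{ a.e. on }\{\bar\ro=0\},
\]
for some Lagrange multiplier $C\in\R$, and the corollary will follow by rewriting these two relations as $h'(\bar\ro)=\max\{C-\hat\varphi,h'(0)\}$.

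To obtain the equality on $\{\bar\ro>0\}$, I will test the optimality with perturbations $\chi\in L^\infty(\Omega)$ compactly supported in $\{\bar\ro>\delta\}$ (for some $\delta>0$) with $\int\chi\,\dd x=0$. For $|\ve|$ small, $\bar\ro+\ve\chi$ is still a nonnegative probability density, so by the two first-variation formulas the optimality of $\bar\ro$ gives
\[
\int_\Omega\bigl(\hat\varphi+h'(\bar\ro)\bigr)\chi\,\dd x=0.
\]
Varying $\delta\to 0$ and using the arbitrariness of $\chi$, this forces $\hat\varphi+h'(\bar\ro)$ to be a.e.\ constant on $\{\bar\ro>0\}$; call this constant $C$. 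Thus $h'(\bar\ro)=C-\hat\varphi$ on $\{\bar\ro>0\}$, and since $h$ is convex one has $h'(\bar\ro)\ge h'(0)$ there, yielding $C-\hat\varphi\ge h'(0)$ and hence $h'(\bar\ro)=\max\{C-\hat\varphi,h'(0)\}$ on this set.

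For the inequality on $\{\bar\ro=0\}$, I will use one-sided perturbations: pick $\chi_+\ge 0$ supported in $\{\bar\ro=0\}$ and $\chi_-\ge 0$ bounded and supported in $\{\bar\ro\ge\delta\}$ with $\int\chi_+\,\dd x=\int\chi_-\,\dd x$, so that $\bar\ro+\ve(\chi_+-\chi_-)$ is admissible for $\ve\in(0,\ve_0)$. The optimality, combined with Theorem \ref{struct}(vi) and Lemma \ref{lemma:h}, gives
\[
\int_\Omega\bigl(\hat\varphi+h'(0)\bigr)\chi_+\,\dd x-\int_\Omega\bigl(\hat\varphi+h'(\bar\ro)\bigr)\chi_-\,\dd x\ge 0.
\]
Since on $\{\bar\ro\ge\delta\}$ we have $\hat\varphi+h'(\bar\ro)=C$, the second integral equals $C\int\chi_-=C\int\chi_+$, so this reduces to
\[
\int_\Omega\bigl(\hat\varphi+h'(0)-C\bigr)\chi_+\,\dd x\ge 0\,
\]
and the arbitrariness of $\chi_+\ge 0$ on $\{\bar\ro=0\}$ yields $\hat\varphi+h'(0)\ge C$ a.e.\ there, i.e.\ $h'(0)\ge C-\hat\varphi$, so $h'(\bar\ro)=h'(0)=\max\{C-\hat\varphi,h'(0)\}$ on $\{\bar\ro=0\}$.

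The main obstacle I foresee is the technical justification of the first-variation computation for $H$: one must exhibit perturbations $\chi$ for which $\bar\ro+\ve\chi$ is in the effective domain of $H$ and for which the incremental quotient converges to $\int h'(\bar\ro)\chi\,\dd x$. The hypotheses that $h\in C^1$, $g>0$ (so $\bar\ro$ itself is not concentrated on the boundary of its domain in any pathological way) and that the test perturbations are bounded with support separated from $\{\bar\ro=0\}$ (or, conversely, supported on it) are precisely what allows Lemma \ref{lemma:h} to be applied, making the one-sided and two-sided differentiation arguments rigorous.
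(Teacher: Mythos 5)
Your argument is correct and follows essentially the same route as the paper: both compute the first variation of $W+H$ via Theorem \ref{struct}(vi) and Lemma \ref{lemma:h} and then read off the Lagrange-multiplier condition $\hat\varphi+h'(\bar\ro)=C$ on $\{\bar\ro>0\}$ and $\hat\varphi+h'(0)\ge C$ on $\{\bar\ro=0\}$. The only cosmetic difference is that the paper perturbs along convex combinations $(1-\ve)\bar\ro+\ve\tilde\ro$ with an arbitrary competitor $\tilde\ro$, which keeps $H$ finite automatically by convexity and yields both conditions at once from a single variational inequality, whereas you use localized two-sided and one-sided perturbations.
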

The proof of this fact is contained in \cite{buttazzo} and in Section 7.2.3 of \cite{OTAM}. We give a sketch here.
\begin{proof}
Take an arbitrary competitor $\tilde\ro$, define $\ro_\eps:=(1-\eps)\bar\ro+\eps\tilde\ro$ and $\chi=\tilde\ro-\bar\ro$ and write the optimality condition
$$0\leq \lim_{\ve\to 0} \frac{(H+W)(\bar\varrho+\ve\chi)-(H+W)(\bar\varrho)}{\ve}.$$
This implies
$$\int (\hat\varphi+h'(\bar\ro))\,\dd\tilde\ro\geq \int (\hat\varphi+h'(\bar\ro))\,\dd\bar\ro$$
for any arbitrary competitor $\tilde\ro$. This means that there is a constant $C$ such that $\hat\varphi+h'(\bar\ro)\geq C$ with $\hat\varphi+h'(\bar\ro)=C$ on $\{\bar\ro>0\}$. The claim is just a re-writing of this fact, distinguishing the set where $\bar\ro> 0$ (and hence $h'(\bar\ro)\geq h'(0)$) and the set where $\bar\ro=0$.
\end{proof}
%

\section{The main inequality}\label{3}
In this section we establish the key inequality needed in the proof of Theorems \ref{mainthm1} and \ref{mainthm2}. 
%

\begin{lemma}\label{mainest}
Suppose that $\ro,g\in L^1_+$ are smooth probability densities, which are bounded away from $0$ and infinity, $\Om\subset\R^d$ a bounded and uniformly convex domain and let $H\in C^2(\Omega)$ be a convex function. Then we have the following inequality
\begin{equation}\label{maineste1}
\int_{\Omega}\Big(\ro\, \nabla\cdot\big[\nabla H(\nabla \varphi)\big]-g\,  \nabla\cdot\big[ \nabla H(-\nabla \psi)\big]\Big)\,\dd x\le 0,
\end{equation}
where $(\varphi,\psi)$ is a choice of Kantorovich potentials.
\end{lemma}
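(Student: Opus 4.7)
The plan is to exploit the regularity hypotheses on $\ro$, $g$ and $\Omega$ in order to invoke Caffarelli's regularity theory (Theorem \ref{regul}): this yields smooth optimal maps $T(x)=x-\nabla\varphi(x)$ and $S(y)=y-\nabla\psi(y)$ that are diffeomorphisms of $\overline\Omega$. In particular $D^2\varphi$ and $D^2\psi$ are classical and, because $T$ is a diffeomorphism, the semi-concavity bound $D^2\varphi\le I$ is \emph{strict}, making $I-D^2\varphi$ pointwise invertible.

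First, I would reduce both integrals to a single integral against $\ro$ via the change of variable $y=T(x)$ in the second one, using $g(T(x))\det\nabla T(x)=\ro(x)$. Crucially, $S\circ T=\mathrm{id}$ yields $\nabla\varphi(x)=x-T(x)=-\nabla\psi(T(x))$, so $D^2 H(-\nabla\psi(T(x)))=D^2 H(\nabla\varphi(x))$. A direct divergence computation then gives
\[
\nabla\cdot\bigl[\nabla H(\nabla\varphi)\bigr](x)=\tr\bigl(D^2 H(\nabla\varphi)\,D^2\varphi(x)\bigr),
\]
\[
\bigl[\nabla\cdot(\nabla H(-\nabla\psi))\bigr](T(x))=-\tr\bigl(D^2 H(\nabla\varphi)\,D^2\psi(T(x))\bigr),
\]
so the left-hand side of \eqref{maineste1} becomes $\int_\Omega \ro(x)\,\tr\bigl(D^2 H(\nabla\varphi)\cdot(A+B)\bigr)\,\dd x$, where I abbreviate $A:=D^2\varphi(x)$ and $B:=D^2\psi(T(x))$.

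The heart of the proof is then a short algebraic identity. Differentiating $S(T(x))=x$ gives $(I-B)(I-A)=I$, whence $A+B=BA$; transposing (both matrices are symmetric) yields $AB=BA$, and eliminating $B$ produces
\[
A+B=-A^2(I-A)^{-1}.
\]
Since $A$ and $(I-A)^{-1}$ commute and are symmetric, $A^2(I-A)^{-1}$ is symmetric positive semidefinite (its eigenvalues are $\lambda_i^2/(1-\lambda_i)\ge 0$ because $\lambda_i<1$). Convexity of $H$ gives $D^2 H(\nabla\varphi)\ge 0$, and the classical fact that the trace of the product of two symmetric PSD matrices is non-negative then forces the integrand to be pointwise $\le 0$, yielding \eqref{maineste1}.

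I expect the main obstacle to be precisely the identity $A+B=-A^2(I-A)^{-1}$, which rests on two delicate points: the commutation $AB=BA$ (a consequence of $T$, $S$ being inverse diffeomorphisms) and the strict invertibility of $I-A$ on all of $\overline\Omega$ (which is why the full Caffarelli smoothness is needed here, not merely a.e.\ differentiability). Once this is in hand the rest of the argument is just bookkeeping.
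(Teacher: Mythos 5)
Your proposal is correct and follows essentially the same route as the paper: after invoking Caffarelli regularity, both arguments reduce the left-hand side to $\int_\Omega \ro\,\mathrm{tr}\bigl(D^2H(\nabla\varphi)\,(A+B)\bigr)$ with $A=D^2\varphi$, $B=D^2\psi\circ T$, use $DS(T(x))\,DT(x)=I$ (equivalently $(I-B)(I-A)=I$) to obtain $A+B=-A^2(I-A)^{-1}$, and conclude by the non-negativity of the trace of a product of symmetric positive semidefinite matrices. The only difference is cosmetic: you change variables to pull the $g$-integral onto the $\ro$ side directly, while the paper phrases the same computation through the composite field $\nabla H(\nabla\varphi)\circ S$ and the identity $DS\circ T=(DT)^{-1}$.
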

\begin{proof}
We first note that since $\ro$ and $g$ are smooth  and away from zero and infinity in $\Omega$, Theorem \ref{regul} implies that $\varphi,\psi $ are smooth as well. 
Now using the identity $S(T(x))\equiv x$ and that \(S_\# g=\ro\) we get 
\begin{align*}
\int_{\Omega}\ro(x)\,\nabla \cdot\big[\nabla H(\nabla \varphi(x))\big]\,\dd x&=\int_{\Omega}g(x)\, \Big[\nabla \cdot\big[\nabla H(\nabla \varphi)\big]\Big](S(x))\,\dd x\\
& = \int_{\Omega} g(x)\, \nabla \cdot\Big[\nabla H\big(\nabla \varphi\circ S\big)\Big](x)\,\dd x\\
& \quad+\int_{\Omega}g(x) \left( \Big[\nabla \cdot\big[\nabla H(\nabla \varphi)\big]\Big](S(x))-\nabla \cdot\big[\nabla H\big(\nabla \varphi\circ S\big)\big](x)\right)\,\dd x,
\end{align*}
and, by the equality 
$$-\nabla\psi(x)=S(x)-x=S(x)-T(S(x))=\nabla\varphi(S(x)),$$
we obtain 
\begin{equation}\label{maineste97}
\begin{array}{ll}
\ds\int_{\Omega}\Big(\ro\, \nabla\cdot\big[\nabla H(\nabla \varphi)\big]-g\,  \nabla\cdot\big[ \nabla H(-\nabla \psi)\big]\Big)\,\dd x=\\
\\
\ds\qquad\qquad\qquad\qquad=\int_{\Omega}g(x) \left( \Big[\nabla \cdot\big[\nabla H(\nabla \varphi)\big]\Big](S(x))-\nabla \cdot\big[\nabla H\big(\nabla \varphi\circ S\big)\big](x)\right)\,\dd x\\
\\
\ds\qquad\qquad\qquad\qquad=\int_{\Omega}\ro(x) \left( \nabla \cdot\big[\nabla H(\nabla \varphi)\big]-\Big[\nabla \cdot\big[\nabla H\big(\nabla \varphi\big)\circ S\big]\Big]\circ T \right)\,\dd x.
\end{array}
\end{equation}
For simplicity we set 
\begin{equation}\label{maineste99}
\begin{array}{ll}
\ds E&=\nabla \cdot\left(\nabla H(\nabla \varphi)\right)- \big[\nabla \cdot\left(\nabla H(\nabla \varphi)\circ S\right)\big]\circ T\\
\\
&=\nabla\cdot\xi-\big[\nabla \cdot (\xi\circ S)\big]\circ T,
\end{array}
\end{equation}
where by $\xi$ we denote the continuously differentiable function
$$\ds\xi(x)=(\xi^1,\dots,\xi^d):=\nabla H(\nabla\varphi(x)),$$
whose derivative is given by 
$$\ds D\xi=D\big(\nabla H(\nabla\varphi)\big)=D^2 H(\nabla\varphi)\cdot D^2\varphi.$$ 
We now calculate
\begin{equation}\label{maineste100}
\begin{array}{ll}
\ds\big[\nabla\cdot (\xi\circ S)\big]\circ T&\ds =\sum_{i=1}^d\frac{\partial(\xi^i\circ S)}{\partial x^i}\circ T=\sum_{i=1}^d\sum_{j=1}^d\frac{\partial \xi^i}{\partial x^j}(S(T))\frac{\partial S^j}{\partial x^i}\circ T\\
\\
&\ds ={\rm tr}\left(D\xi\cdot (D T)^{-1}\right)={\rm tr}\left(D^2 H(\nabla\varphi)\cdot D^2\varphi\cdot(I_d-D^2\varphi)^{-1}\right),
\end{array}
\end{equation}
where the last two equality follow by $DS\circ T = (DT)^{-1}$ and we also used that $(DT)^{-1}=(I_d-D^2\varphi)^{-1}$, where $I_d$ is the $d$-dimensional identity matrix.
%

By \eqref{maineste99} and \eqref{maineste100} we have that
\begin{align*}
E&={\rm tr}\big[D^2 H(\nabla\varphi)\cdot D^2\varphi\cdot\left(I_d-(I_d-D^2\varphi)^{-1}\right)\big]\\
& =-{\rm tr}\big[D^2 H(\nabla\varphi)\cdot \big[D^2\varphi\big]^2\cdot (I_d-D^2\varphi)^{-1}\big].
\end{align*}
Since we have that 
$$I_d-D^2\varphi\ge 0,$$
and that the trace of the product of two positive matrices is positive, we obtain $E\le 0$, which together with \eqref{maineste97} concludes the proof.
\end{proof}

\begin{lemma}\label{main}
Let $\Omega\subset\R^d$ be bounded and convex, $\ro,g\in W^{1,1}(\Omega)$ be two  probability densities and $H\in C^2(\R^d)$ be a radially symmetric convex function. 
Then the following inequality holds
\begin{equation}\label{maine1}
\int_{\Omega} \Big(\nabla \ro\cdot\nabla H(\nabla \varphi) +\nabla g\cdot \nabla H(\nabla \psi)\Big)\,\dd x\ge 0,
\end{equation}
where $(\varphi,\psi)$ is a choice of Kantorovich potentials.
\end{lemma}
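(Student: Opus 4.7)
The proof will reduce to Lemma \ref{mainest} via integration by parts, combined with the crucial observations that the radial symmetry of $H$ converts $\nabla H(-\nabla \psi)$ into $-\nabla H(\nabla \psi)$ and that the boundary terms have a favorable sign thanks to the convexity of $\Omega$.

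Let me first treat the smooth case: assume $\Omega$ is uniformly convex with smooth boundary and $\rho,g$ are smooth probability densities bounded away from $0$ and $+\infty$, so that Lemma \ref{mainest} applies and, by Theorem \ref{regul}, both Kantorovich potentials $\varphi,\psi$ are smooth up to the boundary. I would integrate by parts in the inequality \eqref{maineste1} provided by Lemma \ref{mainest} to move the divergences onto $\rho$ and $g$, obtaining
\[
\int_\Omega \nabla\rho\cdot\nabla H(\nabla\varphi)\,\dd x+\int_\Omega\nabla g\cdot\nabla H(-\nabla\psi)(-1)\,\dd x \ge \int_{\partial\Omega}\bigl[\rho\,\nabla H(\nabla\varphi)-g\,\nabla H(-\nabla\psi)\bigr]\cdot\nu\,\dd\HH^{d-1}.
\]
Now I would use the radial symmetry of $H$: writing $H(y)=h(|y|)$ with $h$ convex and non-decreasing on $[0,\infty)$ (non-decreasing is forced by convexity of $H$ on $\R^d$ together with radial symmetry), we have $\nabla H(y)=h'(|y|)\,y/|y|$, which is odd in $y$. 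Therefore $\nabla H(-\nabla\psi)=-\nabla H(\nabla\psi)$, and the above inequality rearranges precisely into \eqref{maine1} up to the boundary integral.

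The key step is then to show the boundary integral is $\ge 0$. For $x\in\partial\Omega$ the transport map $T(x)=x-\nabla\varphi(x)$ lies in $\overline\Omega$, so $\nabla\varphi(x)=x-T(x)$ is a vector pointing from an interior/boundary point of the convex set $\Omega$ to the boundary point $x$, whence $\nabla\varphi(x)\cdot\nu(x)\ge 0$ by convexity of $\Omega$. Since $\nabla H(\nabla\varphi)=h'(|\nabla\varphi|)\,\nabla\varphi/|\nabla\varphi|$ is a non-negative scalar multiple of $\nabla\varphi$ (using $h'\ge 0$), this gives $\nabla H(\nabla\varphi)\cdot\nu\ge 0$ on $\partial\Omega$, and analogously $\nabla H(\nabla\psi)\cdot\nu\ge 0$. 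Combined with $\rho,g\ge 0$, the boundary term is non-negative, completing the smooth case.

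Finally I would remove the regularity assumptions by approximation: take uniformly convex smooth domains $\Omega_n$ exhausting $\Omega$ (say $\Omega_n\nearrow\Omega$), and on each $\Omega_n$ replace $\rho,g$ by their restrictions mollified and perturbed by a small positive constant, then renormalized to be probability densities $\rho_n,g_n$ that are smooth and bounded away from $0$ and $\infty$; this ensures $\rho_n\to\rho$ and $g_n\to g$ in $W^{1,1}$. The corresponding potentials $\varphi_n,\psi_n$ (chosen with a common normalization) converge in gradient a.e.\ by Theorem \ref{approx}, and since $|\nabla\varphi_n|,|\nabla\psi_n|\le\diam(\Omega)$ uniformly and $\nabla H$ is continuous, $\nabla H(\nabla\varphi_n)$ and $\nabla H(\nabla\psi_n)$ are uniformly bounded and converge a.e. Passing to the limit in the inequality, via the standard $L^1$–$L^\infty$ bound (strong $L^1$ convergence of $\nabla\rho_n,\nabla g_n$ combined with bounded a.e.\ convergence of the nonlinear factor), yields \eqref{maine1} in full generality. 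The main technical obstacle is precisely this approximation step: ensuring that both the domain and the densities can be regularised compatibly while preserving the probability normalization and keeping the Kantorovich potentials under control, but all the needed ingredients are contained in Theorems \ref{regul} and \ref{approx}.
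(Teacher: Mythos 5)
Your proposal is correct and follows essentially the same route as the paper: integrate by parts to reduce to Lemma \ref{mainest}, use the oddness of $\nabla H$ coming from radial symmetry, observe that the boundary term is non-negative because $\nabla\varphi=x-T(x)$ and $\nabla\psi=x-S(x)$ point outward at $\partial\Omega$, and then remove the smoothness and uniform convexity assumptions by approximation using Theorems \ref{regul} and \ref{approx}. The only cosmetic difference is that the paper approximates $\Omega$ from the outside by uniformly convex domains $\Omega\subset\Omega_\eps\subset\Omega'$ rather than exhausting it from within, which changes nothing essential.
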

\begin{proof}
Let us start observing that, due to the radial symmetry of $H$,
\begin{equation}\label{maine2}
\nabla H(\nabla\psi)=-\nabla H(-\nabla\psi).
\end{equation}

\emph{Step 1. Proof in the smooth case.}
Suppose that the probability densities $\ro$ and $g$ are smooth and bounded away from zero and infinity and that $\Omega$ is uniformly convex. As in Lemma \ref{mainest}, we note that under these assumption on $\ro$ and $g$ the Kantorovich potentials are smooth, hence after integration by part  the left hand side of  \eqref{maine1} becomes
\begin{equation*}
\begin{array}{ll}
\ds\int_{\Omega} \Big(\nabla \ro\cdot\nabla H(\nabla \varphi) +\nabla g\cdot \nabla H(\nabla \psi)\Big)\,\dd x &\ds =  \int_{\partial\Omega}\Big(\ro\,\nabla H(\nabla\varphi)\cdot n+g\, \nabla H(\nabla\psi)\cdot n\Big)\,\dd\HH^{d-1}\\
\\
&\ds\qquad -\int_{\Omega} \Big(\ro\,\nabla\cdot \big[\nabla H(\nabla \varphi)\big] + g\,\nabla\cdot \big[\nabla H(\nabla \psi)\big]\Big)\,\dd x \\
\\
&\ds \ge \int_{\partial\Omega}\Big(\ro\,\nabla H(\nabla\varphi)+g\, \nabla H(\nabla\psi)\Big)\cdot n\,\dd\HH^{d-1},
\end{array}
\end{equation*}
where we used Lemma \ref{mainest} and \eqref{maine2}. Moreover, by the radial symmetry of $H$ one has $\nabla H(z)=c(z) z$,  for some $c(z)>0$.  Since the gradients of the Kantorovich potentials $\nabla\varphi$ and $\nabla\psi$ calculated in boundary points are pointing outward $\Omega$ (since $T(x)=x-\nabla\varphi(x)\in\Omega$, and $S(x)=x-\nabla\psi(x)\in\Omega$) we have that 
$$\nabla H(\nabla\varphi(x))\cdot n(x)\ge0\qquad \hbox{and} \qquad \nabla H(\nabla\psi(x))\cdot n(x)\ge0,\qquad \forall x\in\partial\Omega,$$
which concludes the proof of \eqref{maine1} if \(\ro\) and \(g\) are smooth.\\

\emph{Step 2. Withdrawing smoothness and uniform convexity assumptions.}
We first note that for every $\varepsilon>0$ there is a sequence of uniformly convex domains $\Omega_\eps$ such that $\Omega\subset\Omega_\eps\subset \Omega'$ (where $\Omega'$ is a larger fixed convex domain) and $|\Omega_\eps\setminus\Omega|\to 0$, together with smooth nonnegative functions $\ro_\eps\in C^1(\overline\Omega')$ and $g_\eps\in C^1(\overline\Omega')$ such that 
$$\ro_\eps\xrightarrow[\eps\to0]{W^{1,1}(\Omega')}\ro\qquad \hbox{and}\qquad g_\eps\xrightarrow[\eps\to0]{W^{1,1}(\Omega')}g.$$ We will suppose that both $\ro_\eps$ and $g_\eps$ are probability densities on $\Omega_\eps$.
Moreover, by adding a positive constant and then multiplying by another one, we may assume that $\ro_\eps$ and $g_\eps$ are probability densities away from zero:
$$\ro_\eps\ge \eps,\qquad g_\eps\ge \eps\qquad\hbox{and}\qquad\int_{\Omega_\eps}\ro_\eps\,\dd x=\int_{\Omega_\eps} g_\eps\,\dd x=1.$$
Let $\varphi_\eps\in C^{2,\beta}(\overline\Omega_\eps)$ and $\psi_\eps\in C^{2,\beta}(\overline\Omega_\eps)$ be the Kantorovich potentials corresponding to the optimal transport maps between $\ro_\eps$ and $g_\eps$. By \emph{Step 1} we have  
\begin{equation}\label{maine1000}
\int_{\Omega_\eps} \Big(\nabla \ro_\eps\cdot\nabla H(\nabla \varphi_\eps) +\nabla g_\eps\cdot \nabla H(\nabla \psi_\eps)\Big)\,\dd x\ge 0.
\end{equation}
Note that from the boundedness of $\Omega'$ we infer $|\nabla\varphi_\ve|,|\nabla\psi_\ve|\leq C$. Moreover,  $\nabla H$ is locally bounded, which also implies $|\nabla H(\nabla \varphi_\eps)|,|\nabla H(\nabla \psi_\eps)|\leq C$.
On the other hand, from $|\Omega_\eps\setminus\Omega|\to 0$, supposing that the convergence $\nabla\ro_\ve\to \nabla\ro$ and $\nabla g_\ve \to \nabla g$ holds a.e. and is dominated, when we pass to the limit as $\eps\to 0$ the integral restricted to $\Omega_\eps\setminus\Omega$ is negligible. On $\Omega$ we use Theorem \ref{approx}, the bounds on $|\nabla H(\nabla \varphi_\eps)|,|\nabla H(\nabla \psi_\eps)|$ and 
$$\nabla \varphi_\eps\xrightarrow[\eps\to0]{a.e.}\nabla \varphi \qquad\hbox{and}\qquad \nabla \psi_\eps\xrightarrow[\eps\to0]{a.e.}\nabla \psi.$$ 
Passing to the limit as $\eps\to 0$ in \eqref{maine1000} we obtain \eqref{maine1}, which concludes the proof.
%
\end{proof}

\begin{remark}
{\rm
In Lemma  \ref{main} we can drop the  convexity assumption on  $\Om$ if $\ro,g$ have compact support: indeed, it is enough to choose a ball $\Omega'\supset\Omega$ containing the supports of $\ro$ and $g$. 
}
\end{remark}
 \begin{remark}
 {\rm 
Lemma \ref{main}  also remains true in the case of compactly supported densities $g$ and $\ro$, even if we drop the radiality assumption $H(z)=H(|z|)$. In this case  the inequality becomes
$$\int_{\R^d}\Big(\nabla \ro\cdot\nabla H(\nabla \varphi)-\nabla g\cdot \nabla H(-\nabla \psi)\Big)\,\dd x\ge 0.$$
}
\end{remark}
\begin{proof}
The proof follows the same scheme of that of Lemma \ref{main}, first in the smooth case and then for approximation. We select a convex domain $\Om$ large enough to contain the supports of $\ro$ and $g$ in its interior: all the integrations and integration by parts are performed on $\Om$. The only difficulty is that we cannot guarantee the boundary term to be positive. Yet, we first take $\ro,g$ to be smooth and we approximate them by taking $\ro_\ve:=\ve\frac{1}{|\Omega|}+(1-\ve)\ro$ and $g_\ve:=\ve\frac{1}{|\Omega|}+(1-\ve)g$. For these densities and their corresponding potentials $\varphi_\ve,\psi_\ve$, we obtain the inequality
\begin{equation*}
\int_{\Omega} \Big(\nabla \ro_\ve\cdot\nabla H(\nabla \varphi_\ve) +\nabla g_\ve\cdot \nabla H(\nabla \psi_\ve)\Big)\,\dd x   \ge \int_{\partial\Omega}\Big(\ro_\ve\,\nabla H(\nabla\varphi_\ve)+g_\ve\, \nabla H(\nabla\psi_\ve)\Big)\cdot n\,\dd\HH^{d-1}.
\end{equation*}
We can pass to the limit (by dominated convergence as before) in this inequality, and notice that the r.h.s. tends to $0$, since $|\nabla H(\nabla\varphi_\ve)|,|\nabla H(\nabla\psi_\ve)|\leq C$ and $\ro_\ve=g_\ve=\ve/|\Om|$ on $\partial\Omega$. Once the inequality is proven for smooth $\ro,g$, a new approximation gives the desired result.
\end{proof}

We observe that a particular case of Theorem \ref{main}, which we present here as a corollary, could have been obtained in a very different way.

\begin{corollary}\label{coralt}
Let $\Omega\subset\R^d$ be a given bounded convex set and $\ro,g\in W^{1,1}(\Omega)$ be two  probability densities. Then the following inequality holds
\begin{equation}\label{coralt1}
\int_{\Omega} \big(\nabla \ro\cdot \nabla \varphi +\nabla g\cdot \nabla \psi\big)\,\dd x\ge 0,
\end{equation}
where $\varphi$ and $\psi$ are the corresponding Kantorovich potentials.
\end{corollary}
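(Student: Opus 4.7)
The first, fastest route is to apply Lemma~\ref{main} with the choice $H(z)=\tfrac12|z|^2$. This $H$ is radially symmetric and of class $C^2$, and $\nabla H(z)=z$, so that $\nabla H(\nabla\varphi)=\nabla\varphi$ and $\nabla H(\nabla\psi)=\nabla\psi$; the inequality \eqref{maine1} then specializes exactly to \eqref{coralt1}. In this sense the corollary is a direct consequence of Lemma~\ref{main}.

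The ``very different way'' alluded to in the statement can be made rigorous through the shift-invariance of $W_2^2$ combined with its convexity and the dual formulation. Assuming first that $\ro$ and $g$ are smooth probability densities compactly supported strictly inside $\Omega$, for $h\in\R^d$ of small modulus I would introduce
$$\phi(h):=\int_\Omega\ro(x-h)\,\varphi(x)\,\dd x+\int_\Omega g(x-h)\,\psi(x)\,\dd x.$$
Applying the dual formulation \eqref{duality} to the pair $(\ro(\cdot-h),g(\cdot-h))$ with the (no longer optimal) potentials $(\varphi,\psi)$, which still satisfy the admissibility constraint $\varphi(x)+\psi(y)\le\tfrac12|x-y|^2$, gives $\phi(h)\le\tfrac12W_2^2(\ro(\cdot-h),g(\cdot-h))$; the translation-invariance of $W_2^2$ under a common shift makes the right-hand side equal to $\tfrac12W_2^2(\ro,g)=\phi(0)$. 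Therefore $\phi$ attains its maximum at $h=0$.

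The next step is to read off that the symmetric matrix
$$M_{ij}:=\int_\Omega\partial_{ij}\ro\,\varphi\,\dd x+\int_\Omega\partial_{ij}g\,\psi\,\dd x,$$
which is exactly the Hessian of $\phi$ at $0$, must be negative semidefinite. Taking its trace and integrating by parts (the boundary contributions vanish because $\ro$ and $g$ are compactly supported inside $\Omega$) yields
$$0\ge\int_\Omega(\Delta\ro\,\varphi+\Delta g\,\psi)\,\dd x=-\int_\Omega(\nabla\ro\cdot\nabla\varphi+\nabla g\cdot\nabla\psi)\,\dd x,$$
which is \eqref{coralt1} in the smooth, compactly-supported setting.

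The remaining task---and what I expect to be the main technical obstacle---is to remove the extra regularity and compact-support assumption for general $\ro,g\in W^{1,1}(\Omega)$. The strategy is to approximate $\ro$ and $g$ in $W^{1,1}(\Omega)$ by smooth probability densities compactly supported in $\Omega$ (via reflection across $\partial\Omega$, mollification, a slight shrinking of the support, and renormalization), then pass to the limit. Because $\Omega$ is bounded and convex, the corresponding Kantorovich potentials are uniformly Lipschitz, and Theorem~\ref{approx} ensures the a.e.\ convergence of their gradients; combined with the $L^1$-equi-integrability of the approximating gradients (via Vitali/Egoroff), this lets one pass to the limit in the pairing $\int\nabla\ro\cdot\nabla\varphi+\int\nabla g\cdot\nabla\psi$, recovering \eqref{coralt1} in full generality.
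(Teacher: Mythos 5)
Your first paragraph is exactly the paper's proof: Corollary~\ref{coralt} is obtained there by plugging $H(z)=\tfrac12|z|^2$ into Lemma~\ref{main}, so on this point you coincide with the text. Your second argument is genuinely different, however, both from that route and from the alternative the paper itself sketches. The paper's alternative uses displacement convexity of the entropy $\mathcal E(\ro)=\int\ro\log\ro$ along the Wasserstein geodesic from $\ro$ to $g$, comparing the time derivatives of $\mathcal E(\ro_t)$ at $t=0$ and $t=1$; yours instead exploits translation invariance of $W_2^2$ under a common shift together with Kantorovich duality, observing that $h\mapsto\int\ro(x-h)\varphi(x)\,\dd x+\int g(x-h)\psi(x)\,\dd x$ is maximized at $h=0$ and reading the inequality off the trace of the (negative semidefinite) Hessian there. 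This is correct for smooth compactly supported densities: the Hessian only hits the densities, not the merely Lipschitz potentials, and the integration by parts produces no boundary terms. What your argument buys is a purely first-order-optimality/duality proof that avoids both the second-derivative computation on the transport map (the matrix identity in Lemma~\ref{mainest}) and the theory of geodesic convexity; what it loses is generality, since it is tied to the specific choice $H(z)=\tfrac12|z|^2$ (the translation trick produces exactly the quadratic pairing and does not yield the general inequality \eqref{maine1} needed elsewhere, e.g.\ for Corollary~\ref{cormain}). Your closing approximation step (shrinking supports, mollifying, using Theorem~\ref{approx} plus $L^1$-equi-integrability of the gradients) is the same kind of limiting argument the paper performs in Step~2 of Lemma~\ref{main} and is fine, modulo the usual remark that $\nabla\varphi$ is only determined a.e.\ on $\{\ro>0\}$, which is harmless because $\nabla\ro=0$ a.e.\ on $\{\ro=0\}$.
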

\begin{proof}
The inequality \eqref{coralt1} follows by setting $\ds H(z):=\frac{1}{2}|z|^2$ in Theorem \ref{main}. Nevertheless, in this particular case, there is an alternate proof, using the geodesic convexity of the entropy functional, which we sketch below for $\Omega=\R^d$. 

Consider the entropy functional $\mathcal E:\cP_2(\R^d)\to \R$ defined by 
\begin{equation*}
\mathcal E(\ro)=\begin{cases}\begin{array}{ll}\ds\int_{\R^d}\ro\log\ro\,\dd x,&\ds \hbox{if}\quad\ro\ll\cL^d,\\
\ds+\infty,& \ds\hbox{otherwise}, 
\end{array}
\end{cases}
\end{equation*}
and the geodesic
$$[0,1]\ni t\mapsto \ro_t\in \cP_2(\R^d),\qquad \ro_0=\ro,\qquad \ro_1=g,$$  in the Wasserstein space $(\cP_2(\R^d),W_2)$. It is well known (see, for example, \cite{AmbGigSav}) that the map $t\mapsto \mathcal E(\ro_t)$ is convex and that $\ro_t$ solves the continuity equation 
$$\partial_t\ro_t+\nabla\cdot(\ro_tv_t)=0,\qquad \ro_0=\ro,\qquad \ro_1=g,$$ 
associated to the vector field $v_t=(T-id)\circ ((1-t)id+tT)^{-1}$ induced by the optimal transport map $T=id-\nabla\varphi$ between $\ro$ and $g$. Now since the time derivative of $\mathcal E(\ro_t)$ is increasing, we get
\begin{align*}
-\int_{\R^d}\nabla\ro\cdot\nabla\varphi\,\dd x=\int_{\R^d}\ro v_0\cdot\frac{\nabla\ro}{\ro}\,\dd x&=\frac{d}{dt}\Big|_{\{t=0\}}\mathcal E(\ro_t)\\
&\le \frac{d}{dt}\Big|_{\{t=1\}}\mathcal E(\ro_t)= \int_{\R^d}g v_1\cdot\frac{\nabla g}{g}\,\dd x=\int_{\R^d}\nabla g\cdot\nabla\psi\,\dd x,
\end{align*}
which proves the claim.
\end{proof}

By approximating \(H(z)=|z|\) with \(H(z)=\sqrt{ \eps^2+|z|^2}\), Lemma \ref{main} has the following  useful corollary, where we use the convention   $\ds\frac{z}{|z|}=0$ for $z=0$.

\begin{corollary}\label{cormain}
Let $\Omega\subset\R^d$ be a given bounded convex set and $\ro,g\in W^{1,1}(\Omega)$ be two  probability densities. Then the following inequality holds
\begin{equation}\label{cormain1}
\int_{\Omega} \Big(\nabla \ro\cdot \frac{\nabla \varphi}{|\nabla\varphi|} +\nabla g\cdot \frac{\nabla \psi}{|\nabla \psi|}\Big)\,\dd x\ge 0,
\end{equation}
where $\varphi$ and $\psi$ are the corresponding Kantorovich potentials.
\end{corollary}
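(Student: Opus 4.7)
The plan is a direct approximation argument: I apply Lemma~\ref{main} to a smooth, radially symmetric, convex approximation of the norm and then pass to the limit. For $\varepsilon>0$, set
\[
H_\varepsilon(z):=\sqrt{\varepsilon^2+|z|^2}.
\]
Then $H_\varepsilon\in C^2(\R^d)$, is radially symmetric, and is convex (its Hessian is non-negative), so $H_\varepsilon$ meets the hypotheses of Lemma~\ref{main}. Its gradient is
\[
\nabla H_\varepsilon(z)=\frac{z}{\sqrt{\varepsilon^2+|z|^2}}.
\]

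Applying Lemma~\ref{main} with $H=H_\varepsilon$ to the two probability densities $\rho,g\in W^{1,1}(\Omega)$ and their Kantorovich potentials $\varphi,\psi$, I obtain
\begin{equation}\label{eq:approx-ineq}
\int_{\Omega}\Bigl(\nabla\rho\cdot\frac{\nabla\varphi}{\sqrt{\varepsilon^2+|\nabla\varphi|^2}}+\nabla g\cdot\frac{\nabla\psi}{\sqrt{\varepsilon^2+|\nabla\psi|^2}}\Bigr)\,\dd x\ge 0.
\end{equation}
The idea is now to send $\varepsilon\to 0^+$ in \eqref{eq:approx-ineq}. For any $z\in\R^d\setminus\{0\}$ one has $z/\sqrt{\varepsilon^2+|z|^2}\to z/|z|$, while at $z=0$ the approximating expression vanishes identically in $\varepsilon$, matching the convention $z/|z|=0$. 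Hence the integrand in \eqref{eq:approx-ineq} converges pointwise almost everywhere in $\Omega$ to
\[
\nabla\rho\cdot\frac{\nabla\varphi}{|\nabla\varphi|}+\nabla g\cdot\frac{\nabla\psi}{|\nabla\psi|}.
\]

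For the passage to the limit I use Lebesgue's dominated convergence theorem. The elementary bound $|z|/\sqrt{\varepsilon^2+|z|^2}\le 1$ gives, uniformly in $\varepsilon$,
\[
\Bigl|\nabla\rho\cdot\frac{\nabla\varphi}{\sqrt{\varepsilon^2+|\nabla\varphi|^2}}+\nabla g\cdot\frac{\nabla\psi}{\sqrt{\varepsilon^2+|\nabla\psi|^2}}\Bigr|\le |\nabla\rho|+|\nabla g|,
\]
and the right-hand side lies in $L^1(\Omega)$ because $\rho,g\in W^{1,1}(\Omega)$. This provides the required integrable dominant, so dominated convergence applies and the non-negativity is preserved in the limit, yielding \eqref{cormain1}. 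There is no real obstacle here beyond checking these two points: the uniform bound and the pointwise convergence (with the convention at zero); both are immediate.
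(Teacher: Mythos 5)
Your proof is correct and follows exactly the route the paper indicates (the paper only states, just before the corollary, that it follows "by approximating $H(z)=|z|$ with $H(z)=\sqrt{\eps^2+|z|^2}$" in Lemma~\ref{main}, leaving the limit passage implicit). Your explicit verification of the pointwise convergence, the convention at $z=0$, and the uniform dominant $|\nabla\ro|+|\nabla g|\in L^1(\Omega)$ fills in precisely the details the paper omits.
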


\section{BV estimates for minimizers}\label{4}
In this section we prove Theorem \ref{mainthm1}. Since we will need to perform several  approximation arguments, and we want to use $\Gamma-$convergence, we need to provide uniqueness of the minimizers. The following easy lemma is well-known among specialists.

\begin{lemma}\label{strict conv W_2}
Let \(g\in \mathcal P(\Omega)\cap L^1_+(\Omega)\), then 
the functional $\mu \mapsto W_2^2(\mu,g)$ is strictly  convex on $\pical_2(\Om)$.
\end{lemma}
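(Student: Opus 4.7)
The plan is to first establish plain convexity by the standard trick of taking a convex combination of optimal transport plans, and then upgrade this to \emph{strict} convexity by invoking the uniqueness of the Brenier-type optimal plan, which is available precisely because $g$ is absolutely continuous.

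For convexity, given $\mu_0,\mu_1\in\mathcal P_2(\Omega)$ and $t\in(0,1)$, I would pick optimal plans $\gamma_i\in\Pi(\mu_i,g)$ for $W_2^2(\mu_i,g)$, set $\mu_t:=(1-t)\mu_0+t\mu_1$, and observe that the convex combination $\tilde\gamma:=(1-t)\gamma_0+t\gamma_1$ is an admissible competitor in the Kantorovich problem between $\mu_t$ and $g$. Linearity of the quadratic cost in the plan then yields
$$
\tfrac12 W_2^2(\mu_t,g)\;\le\;\tfrac12\int|x-y|^2\,\dd\tilde\gamma\;=\;(1-t)\,\tfrac12 W_2^2(\mu_0,g)+t\,\tfrac12 W_2^2(\mu_1,g),
$$
which is the plain convexity of $\mu\mapsto W_2^2(\mu,g)$.

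For strict convexity I would argue by contradiction: suppose equality holds above for some $t\in(0,1)$ with $\mu_0\ne\mu_1$. Then $\tilde\gamma$ itself must be optimal for $W_2^2(\mu_t,g)$. Since $g\in L^1_+(\Omega)$, the Brenier theorem (applied with $g$ in the role of the source, in the spirit of Theorem \ref{struct}(ii)) ensures that the optimal plan between $g$ and any measure in $\mathcal P_2(\Omega)$ is unique and supported on the graph of a map from $g$. Hence there exist maps $R_{\mu_0},R_{\mu_1},R_{\mu_t}$ with $(R_{\mu_i})_\#g=\mu_i$ and $(R_{\mu_t})_\#g=\mu_t$, and
$$
\gamma_i=(R_{\mu_i},\mathrm{id})_\#g\quad(i=0,1),\qquad \gamma^{*}:=(R_{\mu_t},\mathrm{id})_\#g
$$
is the unique optimum for $(\mu_t,g)$. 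Disintegrating both sides of the identity $\tilde\gamma=\gamma^{*}$ along the common second marginal $g$ would give, for $g$-a.e.\ $y$,
$$
(1-t)\,\delta_{R_{\mu_0}(y)}+t\,\delta_{R_{\mu_1}(y)}=\delta_{R_{\mu_t}(y)}.
$$
Since $t\in(0,1)$, this forces $R_{\mu_0}(y)=R_{\mu_1}(y)$ for $g$-a.e.\ $y$, and pushing forward by $g$ yields $\mu_0=\mu_1$, a contradiction.

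The only mild subtlety I foresee is that Theorem \ref{struct}(i) as stated in the excerpt assumes \emph{both} marginals to be $L^1_+$ densities, whereas here $\mu_0,\mu_1$ may be arbitrary elements of $\mathcal P_2(\Omega)$; one simply relies on the fact that the classical Brenier theorem requires only \emph{one} marginal (here $g$) to be absolutely continuous to guarantee uniqueness of the plan and its representation as the graph of a transport map from that side.
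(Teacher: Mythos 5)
Your proposal is correct and follows essentially the same route as the paper's proof: take the convex combination of the two optimal plans (each induced by a map from the absolutely continuous side $g$ via Brenier), observe that equality forces this combination to be optimal for $\mu_t$, and conclude that it cannot be graph-type unless the two maps coincide $g$-a.e. Your disintegration step merely makes explicit the paper's remark that $\gamma_t$ is not induced by a transport map unless $T_0=T_1$, and your closing observation about needing absolute continuity of only one marginal is exactly the point the paper relies on.
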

\begin{proof}
Suppose by contradiction that there exist $\mu_0\neq\mu_1$ and $t\in ]0,1[$ are such that 
\[
W_2^2(\mu_t, g )=(1-t)W_2^2(\mu_0, g )+tW_2^2(\mu_1, g ),
\]
 where $\mu_t=(1-t)\mu_0+t\mu_1$. Let $\gamma_0$ be the optimal transport plan in the transport from $\mu_0$ to $g$ (pay attention to the direction: it is a transport map if we see it backward: {\it from $ g $ to $\mu_0$}). As the starting measure is absolutely continuous, by Brenier's Theorem,  $\gamma_0$ is of the form $(T_0,id)_\# g $. Analogously,  take $\gamma_1=(T_1,id)_\# g $ optimal from  $\mu_1$ to $ g $. 
Set $\gamma_t:=(1-t)\gamma_0+t\gamma_1\in \Pi(\mu_t, g )$. We have
\begin{multline*}
(1-t)W_2^2(\mu_0, g )+tW_2^2(\mu_1, g )=W_2^2(\mu_t, g )\leq \int |x-y|^2\,\dd\gamma_t
=(1-t)\int |x-y|^2\,\dd\gamma_0+t\int |x-y|^2\,\dd\gamma_1\\
=(1-t)W_2^2(\mu_0, g )+tW_2^2(\mu_1, g ),
\end{multline*}
which implies that $\gamma_t$ is actually optimal in the transport from $ g $ to $\mu_t$. Yet $\gamma_t$ is not induced from a transport map, unless $T_0=T_1$ a.e. on  \(\{g>0\}\). This is a contradiction with $\mu_0\neq\mu_1$ and proves strict convexity.
\end{proof}

Let us denote by $\mathcal C$ the class of convex l.s.c. function $h:\R_+\to\R\cup\{+\infty\}$,
finite in a neighborhood of $0$ and with finite right derivative $h'(0)$ at $0$, and 
superlinear at $+\infty$.
%
%

\begin{lemma}
If $h\in\mathcal C$ there exists a sequence of $C^2$ convex functions $h_n$, superlinear at $\infty$, with $h_n''>0$, $h_n\leq h_{n+1}$ and $h(x)=\lim_n h_n(x)$ for every $x\in\R_+$.

Moreover, if $h:\R_+\to\R\cup\{+\infty\}$ is a convex l.s.c. superlinear function, there exists a sequence of functions $h_n\in\mathcal C$ with $h_n\leq h_{n+1}$ and $h(x)=\lim_n h_n(x)$ for every $x\in\R_+$.

\end{lemma}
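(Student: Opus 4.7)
My plan is to prove the two claims in turn, using infimal convolution (Moreau--Yosida regularization) and, for the first claim, Legendre--Fenchel duality. I address the second, more elementary, claim first. Given a convex, l.s.c., superlinear $h:\R_+\to\R\cup\{+\infty\}$, define
\[
 h_n(x) := \inf_{y\geq 0}\bigl[h(y) + n(x-y)^2\bigr].
\]
Standard Moreau--Yosida theory shows $h_n$ is convex and $C^{1,1}$ on $\R_+$ (hence continuous, in particular l.s.c.), with $h_n\leq h_{n+1}$ (as the penalty $n(x-y)^2$ increases in $n$) and $h_n(x)\to h(x)$ pointwise (thanks to the l.s.c.\ and convexity of $h$). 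Membership $h_n\in\mathcal{C}$ follows: $h_n(0)\leq h(y_0)+ny_0^2<\infty$ for any $y_0$ in the effective domain of $h$; $h_n'(0)$ is finite by the $C^{1,1}$ regularity; and $h_n$ is superlinear because given any $M>0$, the superlinearity of $h$ yields $R$ such that $h(y)\geq My$ for $y\geq R$, and then a direct minimization of $My+n(x-y)^2$ over $y\geq R$ gives, for $x$ large, $h_n(x)\geq Mx-c_{M,n}$, so $h_n(x)/x\to\infty$.

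For the first claim, given $h\in\mathcal{C}$, I would pass to the Legendre--Fenchel transform. Set $h^*(p):=\sup_{x\geq 0}[px-h(x)]$; this is convex, l.s.c., and finite everywhere on $\R$ by the superlinearity of $h$ (the assumption $h'_+(0)<\infty$ only forces $h^*\equiv-h(0)$ on $(-\infty,h'_+(0)]$, which is harmless). Define
\[
 h_n^*(p) := (h^* * \rho_{1/n})(p) + \tfrac{\varepsilon_n}{2}\, p^2,
\]
where $\rho_{1/n}$ is a smooth symmetric mollifier and $\varepsilon_n\searrow 0$. Then $h_n^*$ is $C^\infty$, strictly convex with $(h_n^*)''\geq\varepsilon_n>0$, finite everywhere on $\R$, and superlinear at $\pm\infty$ (thanks to the quadratic). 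Moreover $h_n^*\geq h^*$ (by Jensen applied to the convolution of a convex function with a mean-zero mollifier, plus the non-negativity of the quadratic); the sequence is monotonically non-increasing, $h_{n+1}^*\leq h_n^*$ (because the convolution of a convex function with a symmetric mollifier $\rho_\delta$ is non-decreasing in $\delta$, combined with $\varepsilon_n\searrow 0$); and $h_n^*\to h^*$ pointwise. Setting $h_n:=(h_n^*)^*$ (restricted to $\R_+$) and using the Fenchel--Moreau identity $h^{**}=h$ together with the anti-monotonicity of the Legendre transform, I obtain $h_n\leq h_{n+1}\leq h$ with $h_n\to h$ pointwise. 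Since $h_n^*$ is $C^\infty$, strictly convex, and superlinear in both directions, the gradient $(h_n^*)'$ is a smooth bijection $\R\to\R$ with smooth inverse; hence $h_n$ is $C^\infty$ on $\R$ with $h_n''(x)=1/(h_n^*)''(h_n'(x))>0$ everywhere, and $h_n$ is superlinear because $h_n^*$ is finite on all of $\R$.

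The main technical obstacle is reconciling, in the first claim, the competing requirements $h_n\leq h$, monotone convergence $h_n\nearrow h$, $C^2$ regularity, strict convexity $h_n''>0$, and superlinearity. The Legendre-duality approach resolves this cleanly because (i) strict convexity and smoothness on the dual side correspond exactly to smoothness and strict convexity on the primal, (ii) both the mollification and the quadratic perturbation only increase $h^*$, so on the primal side the ordering $h_n\leq h$ is automatic, and (iii) monotone convergence passes through the Legendre transform for monotone sequences of convex functions. The choice of adding a full quadratic $\tfrac{\varepsilon_n}{2}p^2$ (rather than a sublinear perturbation such as $\varepsilon_n\sqrt{1+p^2}$) is important to ensure superlinearity of $h_n^*$ in both directions, and hence smoothness of $h_n$ on all of $\R$.
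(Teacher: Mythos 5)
Your proof is correct, but it takes a genuinely different route from the paper's. The paper argues entirely on the primal side and by explicit construction: for the first claim it replaces the derivative $h'$ by an increasing function $\xi_n$ (constant for $x\le 0$, equal to $h'$ on $[0,\ell^+-\frac1n]$, affine with slope $n$ beyond the right endpoint $\ell^+$ of the domain of $h$), convolves with a one-sided mollifier, subtracts a small term $\frac1n e^{-t}$ to force $h_n''>0$, and integrates imposing $h_n(0)=h(0)$; for the second claim it performs an explicit piecewise-affine modification near $\ell^-:=\inf\{x:h(x)<+\infty\}$. You instead use the Moreau--Yosida envelope for the second claim and Legendre duality (symmetric mollification of $h^*$ plus a $\frac{\varepsilon_n}{2}p^2$ perturbation, then conjugating back) for the first. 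Your route is more systematic: the inequalities $h_n\le h_{n+1}\le h$ and the pointwise convergence follow from the order-reversing property of conjugation together with the identity $\sup_n (f_n)^*=(\inf_n f_n)^*$, and the $C^2$ regularity with $h_n''>0$ is read off from $(h_n^*)''\ge\varepsilon_n$ and the two-sided superlinearity of $h_n^*$; it also avoids any case distinction on whether $\ell^+$ is finite. What the paper's hands-on construction buys is elementariness and the (unused) feature that $h_n$ agrees with $h$ on most of its domain. Two points you should make explicit: in the superlinearity estimate for the Moreau envelope you only minimize over $y\ge R$, so you should add the trivial bound $h(y)+n(x-y)^2\ge \inf h+n(x-R)^2$ for $0\le y<R$; and the monotonicity of $f*\rho_\delta$ in $\delta$ deserves a one-line justification, namely that for $f$ convex and $\rho$ even the map $\delta\mapsto f(x-\delta s)+f(x+\delta s)$ is even and convex, hence non-decreasing for $\delta\ge 0$ --- the whole monotone approximation scheme rests on this fact.
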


\begin{proof}
Let us start from the case $h\in\mathcal C$.
Set $\ell^+:=\sup\{x\,:\,h(x)<+\infty\}\in\R_+\cup\{+\infty\}$. Let us define an increasing function $\xi_n:\R\to\R$ in the following way:  
$$\xi_n(x):=\begin{cases} h'(0)&\mbox{ for }x\in]-\infty, 0]\\
					h'(x)&\mbox{ for }x\in[0,\ell^+-\frac 1n]\\
					h'(\ell^+-\frac1n)&\mbox{ for }\ell^+-\frac1n\leq x <\ell^+,\\
					h'(\ell^+-\frac1n)+n(x-\ell^+)&\mbox{ for }x\geq \ell^+,\end{cases}$$
where, if the derivative of $h$ does not exist somewhere, we just replace it with the right derivative. (Notice that  when $\ell^+=+\infty$, the last two cases do not apply).

Let $q\geq 0$ be a $C^1$ function with $\spt (q)\subset [-1,0]$, $\int q(t)\,\dd t=1$ and  let us set $q_n(t)=nq(nt)$. We  define $h_n$ as the primitive of the $C^1$ function 
$$h'_n(x):=\int \left(\xi_n(t)-\frac 1n e^{-t}\right)q_n(t-x)\,\dd t,$$
with $h_n(0)=h(0)$.
It is easy to check that all the required properties are satisfied: we have $h_n''(x)\geq \frac 1n e^{-x}$, $h_n$ is superlinear because $\lim_{x\to\infty}\xi_n(x)=+\infty$, and we have increasing convergence $h_n\to h$. 
%
%

For the case of a generic function $h$, it is possible to approximate it with functions in $\mathcal C$ if we define $\ell^-:=\inf\{x\,:\,h(x)<+\infty\}\in\R_+$ and take 
$$h_n(x)=\begin{cases} h(\ell^-+\frac1n)+h'(\ell^-+\frac1n)(x-\ell^--\frac1n)+n|x-\ell^-|&\mbox{ for }x\leq\ell^-\\
				h(\ell^-+\frac1n)+h'(\ell^-+\frac1n)(x-\ell^--\frac1n)&\mbox{ for }x\in]\ell^-,\ell^-+\frac 1n]\\
					h(x)&\mbox{ for }x\geq \ell^-+\frac1n.\end{cases}$$
In this case as well, it is easy to check that all the required properties are satisfied. \end{proof}

{\bf Proof of Theorem \ref{mainthm1}.}

\begin{proof}
Let us start from the case where $g$ is $W^{1,1}$ and bounded from below, and $h$ is $C^2$, superlinear, with $h''>0$, and $\Om$ is a bounded convex set. A minimizer $\bar\ro$ exists (by the compactness of $\pical_2(\Om)$ and by the lower  semicontinuity of the functional  with respect to the weak convergence of measures). Thanks to Corollary \ref{coro:h and W},  there exists a Kantorovich potential $\varphi$ for the transport from $\bar\ro$ to $g$ such that $h'(\bar\ro)=\max\{C-\varphi,h'(0)\}$. This shows that $h'(\bar\ro)$ is Lipschitz continuous. Hence, $\bar\ro$ is bounded. On bounded sets $h'$ is a diffeomorphism with Lipschitz inverse, thanks to $h''>0$, which proves that $\bar\ro$ itself is Lipschitz. Then we can apply
Corollary \ref{cormain} and get
\begin{equation*}
\int_{\Omega} \Big(\nabla \bar\ro\cdot \frac{\nabla \varphi}{|\nabla\varphi|} +\nabla g\cdot \frac{\nabla \psi}{|\nabla \psi|}\Big)\,\dd x\ge 0.
\end{equation*}
Yet, a.e. on $\{\nabla\bar\ro   \neq 0\}$ we have from $h'(\bar\ro)=C-\varphi$. Using also $h''>0$, we get that $\nabla\varphi$ and $\nabla\bar\ro$ are vectors with opposite directions. Hence we have
\begin{equation*}
\int_{\Omega}| \nabla\bar\ro|\,\dd x\leq \int_{\Omega}\nabla g\cdot \frac{\nabla \psi}{|\nabla \psi|}\,\dd x\leq \int_{\Omega}|\nabla g|\,\dd x,
\end{equation*}
which is the desired estimate.

We can generalize to $h\in\mathcal C$ by using the previous lemma and approximating it with a sequence $h_n$. Thanks to monotone convergence we have $\Gamma-$convergence for the minimization problem that we consider. We also have compactness since $\pical_2(\Om)$ is compact, and uniqueness of the minimizer. Hence, the minimizers $\bar\ro_n$ corresponding to $h_n$ satisfy $\int_{\Omega}| \nabla\bar\ro_n|\leq \int_{\Omega}|\nabla g|$ and converge to the minimizer $\bar\ro$ corresponding to $h$. By the semicontinuity of the total variation we conclude the proof in this case.

Similarly, we can generalize to other convex functions $h$, approximating them with functions in $\mathcal C$ (notice that this is only interesting if the function $h$ allows the existence of at least a probability density with finite cost, i.e. if $h(1/|\Om|)<+\infty$). Also, we can take $g\in BV$ and approximate it with $W^{1,1}$ functions bounded from below. If the approximation is done for instance by convolution, then we have a sequence with $W_2(g_n,g)\to 0$, which guarantees uniform convergence of the functionals, and hence $\Gamma-$convergence.

We can also handle the case where $\Om$ is unbounded and convex, by first taking $g$ to be such that its support is a convex  bounded set, and $h\in \cC$. In this case the optimal $\bar\ro$ must be compactly supported as well. Indeed, the optimality condition $h'(\bar\ro)=\max\{C-\varphi,h'(0)\}$ imposes $\bar\ro=0$ on the set where $\varphi>C-h'(0)$. But on $\{\bar\ro>0\}$ we have $\varphi=\psi^c$, where $\psi$ is the Kantorovich potential defined on $\spt(g)$, which is bounded. Hence $\varphi$ grows at infinity quadratically, from $\varphi(x)=\inf_{y\in\spt(g)}\frac12|x-y|^2-\psi(y)$, which implies that there is no point $x$ with $\bar\ro(x)>0$ too far from $\spt (g)$. Once we know that the densities are compactly supported, the same arguments as above apply (note that being \(\Omega\) convex we ca assume that the densities are supported on a bounded convex set). Then one passes to the limit obtaining the result for any generic convex function $h$, and then we can also approximate $g$ (as above, we select a sequence $g_n$ of compactly supported densities converging to $g$ in $W_2$). Notice that in this case the convergence is no more uniform on $\pical_2(\Om)$, but it is uniform on a bounded set $W_2(\ro,g)\leq C$ which is the only one interesting in the minimization.
\end{proof}

\section{Projected measures under density constraints}\label{5}
\subsection{Existence, uniqueness,  characterization, stability of the projected measure}

In this section we will take $\Omega\subset\R^d$ be a given closed set with negligible boundary, $f:\Omega\to[0,+\infty[$ a measurable function in $L^1_{\rm loc}(\Omega)$ with $\int_\Omega f\,\dd x>1$ and $\mu \in \mathcal P_2(\Omega)$ a given probability measure on $\Omega$. 
We will consider the following projection problem 
\begin{equation}\label{op}
\min_{\tm\in K_f}\, W_2^2(\tm,\mu),
\end{equation}  
where we set $K_f=\{\ro\in L^1_+(\Omega)\ :\ \int_\Omega \tm\,\dd x=1,\,\ro\leq f\}$.

This section is devoted to the study of the above projection problem. We first want to summarize the main known results. Most of these results are only available in the case $f=1$. 

{\bf Existence.} The existence of a solution to Problem \eqref{op} is a consequence of the direct method of calculus of variations. Indeed, take a minimizing sequence $\ro^n$; it is tight thanks to the bound $W_2(\ro^n,\mu)\leq C$; it admits a weakly converging subsequence and the limit minimizes the functional $W_2(\cdot,\mu)$ because of its semicontinuity and of the fact that the inequality $\ro\leq f$ is preserved.
We note that from the existence point of view, the case $f\equiv 1$ and the general case do not show any significant difference.

{\bf Characterization.} The optimality conditions, derived in \cite{aude phd} exploiting the strategy developed in \cite{MauRouSan} (in the case $f=1$, but they are easy to adapt to the general case) state the following: if $\ro$ is a solution to the above problem and $\varphi$ is a Kantorovich potential  in the transport from $\ro$ to $\mu$, then there exists a threshold $\ell\in \R$ such that
$$\ro(x)=\begin{cases} f(x),&\mbox{ if }\varphi(x)< \ell,\\
					0,&\mbox{ if }\varphi(x)> \ell,\\
					\in[0,f(x)],&\mbox{ if }\varphi(x)= \ell.\end{cases}$$
In particular, this shows that $\nabla\varphi=0$ $\ro-$a.e. on $\{\ro<f\}$ and, since $T(x)=x-\nabla\varphi(x)$,  that the optimal transport $T$ from $\ro$ to $\mu$ is the identity on such set. If $\mu=g\dd x$ is absolutely continuous, then one can write the Monge-Amp\`ere equation 
\[
\det(DT(x))=\ro(x)/g(T(x))
\]
and deduce $\ro(x)=g(T(x))=g(x)$ a.e. on $\{\ro<f\}$. This suggests a sort of saturation result for the optimal $\ro$, i.e. $\ro(x)$ is either equal to $g(x)$ or to $f(x)$ (but one has to pay attention to the case $\ro=0$ and also to assume that $g$ is absolutely continuous). 
					
{\bf Uniqueness.} For absolutely continuous measures $\mu=g\,\dd x$ and generic $f$ the uniqueness of the projection follows by Lemma \ref{strict conv W_2}. In the specific case $f=1$ and $\Omega$ convex the uniqueness was proved in \cite{MauRouSan,aude phd} by a completely different method. In this case, as observed  by A. Figalli, one can use displacement convexity along generalized geodesics. This means that if $\ro^0$ and $\ro^1$ are two solutions, one can take for every $t\in[0,1]$ the convex combination $T^t=(1-t)T^0+tT^1$ of the optimal transport maps $T^i$ from $g$ to $\ro^i$
and the curve $t\mapsto\ro^t:=((1-t)T^0+tT^1)_\#\mu $ in $\mathcal{P}_2$, interpolating from $\ro^0$ to $\ro^1$. It can be proven that $\ro^t$ still satisfies $\ro^t\leq 1$ (but this can not be adapted to $f$, unless $f$ is concave) and that $t\mapsto W_2^2(\ro^t,g)<(1-t)W_2^2(\ro^0,g)+tW_2^2(\ro^1,g)$, which is a contradiction to the minimality. The assumption on $\mu$ can be relaxed but we need to ensure the existence of optimal transport maps: what we need to assume, is that $\mu$ gives no mass to ``small'' sets (i.e. $(d-1)-$dimensional); see \cite{gigli mcc} for the sharp assumptions and notions about this issue. Thanks to this uniqueness result, we can define a projection operator $P_{K_1}:\pical_2(\Omega)\cap L^1(\Omega)\to\pical_2(\Omega)\cap L^1(\Omega)$  through 
$$P_{K_1}[g]:=\argmin \{W_2^2(\ro,g)\ :\ \ro\in K_1\}.$$

{\bf Stability.} From the same displacement interpolation idea, A. Roudneff-Chupin also proved (\cite{aude phd}) that the projection is H\"older continuous with exponent $1/2$ for the $W_2$ distance whenever $\Omega$ is a compact convex set. We do not develop the proof here, we just refer to Proposition 2.3.4 of \cite{aude phd}. Notice that the constant in the H\"older continuity depends a priori on the diameter of $\Omega$. However, to be more precise, the following estimate is obtained (for $g^0$ and $g^1$ absolutely continuous)
\begin{equation}\label{holder}
W_2^2(P_{K_1}[g^0],P_{K_1}[g^1])\leq W_2^2(g^0,g^1)+W_2(g^0,g^1)(\dist(g^0,K_1)+\dist(g^1,K_1)),
\end{equation}
which shows that, even on unbounded domains, we have a local H\"older behavior.

In the rest of the section, we want to recover similar results in the largest possible generality, i.e. for general $f$, and without the assumptions on $\mu$ and $\Omega$.

We will first get a saturation characterization for the projections, which will allow for a general uniqueness result. Continuity will be an easy corollary.

In order to proceed, we first need the following lemma.

\begin{lemma}\label{rho_eq_f}
Let $\ro$ be a solution of the Problem \ref{op}. Let moreover $\g\in\Pi(\ro,\mu)$ be the optimal plan from $\ro$ to $\mu.$ If $(x_0,y_0)\in\spt(\g)$ then $\ro=f$ a.e. in $B(y_0,R),$ where $R=|y_0-x_0|.$ 
\end{lemma}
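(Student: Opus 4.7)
The plan is to argue by contradiction. I would suppose the set $S := \{z \in B(y_0, R) : \ro(z) < f(z)\}$ has positive Lebesgue measure and construct a competitor $\tilde\ro \in K_f$ with $W_2^2(\tilde\ro, \mu) < W_2^2(\ro, \mu)$, contradicting the optimality of $\ro$. The geometric intuition is that every point $z \in B(y_0, R)$ is strictly closer to $y_0$ than $x_0$ is, so if the constraint $\ro \le f$ has some slack near such a $z$, one should be able to reroute a sliver of the $\g$-transport: instead of sending a bit of mass from a neighborhood of $x_0$ to a neighborhood of $y_0$, send the same mass from a neighborhood of $z$ to the same neighborhood of $y_0$, which is strictly cheaper.

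To make this precise, I would fix a Lebesgue density point $z \in S$ (which exists since $|S|>0$) and parameters $\delta_1, \delta_2, r > 0$ to be tuned, setting $U := B(x_0, \delta_1)$, $V := B(y_0, \delta_2)$, $W := B(z, r) \cap S$. Since $(x_0, y_0) \in \spt(\g)$, we have $\a := \g(U \times V) > 0$, and by further shrinking the $\delta_i$'s the quantity $\a$ can be made as small as desired. I then introduce the marginals
$$G_1 := \pi^x_\#\bigl(\g \mres (U \times V)\bigr), \qquad G_2 := \pi^y_\#\bigl(\g \mres (U \times V)\bigr),$$
both of mass $\a$, with $G_1 \le \ro\,\dd x$ and $G_2 \le \mu$. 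Using that $z$ is a density point of $S$ and $f > \ro$ on $S$, one has $\int_W (f-\ro)\,\dd x > 0$, so for $\a$ small enough there exists a positive measure $\nu$ with density bounded by $(f - \ro)\ind_W$ and total mass $\a$. I then define
$$\tilde\ro := \ro - G_1 + \nu, \qquad \tilde\g := \g - \g \mres (U\times V) + \tilde\g_0,$$
where $\tilde\g_0$ is any element of $\Pi(\nu, G_2)$ (nonempty since both measures have mass $\a$). A direct marginal computation gives $\tilde\g \in \Pi(\tilde\ro, \mu)$, and $0 \le \tilde\ro \le f$ a.e.\ provided $U$ and $W$ are disjoint (which is arranged by taking $r, \delta_1$ small).

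It remains to compare the costs:
$$\int|x-y|^2\,\dd\tilde\g - \int|x-y|^2\,\dd\g = \int|x-y|^2\,\dd\tilde\g_0 - \int_{U\times V}|x-y|^2\,\dd\g.$$
The first integral is at most $\a(|z-y_0|+r+\delta_2)^2$ since $\tilde\g_0$ is supported in $W \times V$, while the second is at least $\a(R-\delta_1-\delta_2)^2$. Because $|z - y_0| < R$, for $\delta_1, \delta_2, r$ small enough this yields a strictly negative difference, whence $W_2^2(\tilde\ro, \mu) \le \int|x-y|^2\,\dd\tilde\g < W_2^2(\ro, \mu)$, contradicting the optimality of $\ro$. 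The main delicate point is the order in which the parameters are chosen: first fix $z$, then $r, \delta_1, \delta_2$ small enough to secure the strict geometric gap $|z-y_0|+r+\delta_2 < R - \delta_1 - \delta_2$ and the disjointness of $U, W$, and only then shrink $\delta_1, \delta_2$ further so that $\a = \g(U\times V) \le \int_W (f-\ro)\,\dd x$ and the measure $\nu$ is available.
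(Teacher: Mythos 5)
Your proposal is correct and follows essentially the same strategy as the paper's proof: argue by contradiction, remove the piece of $\g$ transporting mass from a small neighborhood of $x_0$ into a small neighborhood of $y_0$, and reroute that mass so that it originates instead from a region strictly inside $B(y_0,R)$ where $\ro<f$, which is strictly cheaper; the only cosmetic difference is that the paper works with a compact subset $K\subset B(y_0,R)$ where $\ro<f$ (with $\e=\dist(\partial B(y_0,R),K)$) rather than a ball around a density point, and uses the product coupling for the rerouted mass. Your parameter bookkeeping (disjointness of $U$ and $W$, the order in which $r,\delta_1,\delta_2$ are shrunk, and the availability of $\nu$ via absolute continuity of the integral) is sound.
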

\begin{proof}
Let us suppose that this is not true and there exists a compact set  $K\subset B(y_0,R)$  with positive Lebesgue measure such that $\ro< f$ a.e. in $K$. Let  $\e:=\dist(\partial B(y_0,R),K)>0$.

By the definition of the support, for all $r>0$ we have that 
$$0<\g(B(x_0,r)\times B(y_0,r))\le \int_{B(x_0,r)} \ro\,\dd x \le \int_{B(x_0,r)} f\,\dd x.$$
By the absolute continuity of the integral, for $r>0$ small enough there exists $0<\a\le 1$ such that 
$$\g(B(x_0,r)\times B(y_0,r))=\a\int_K (f-\ro)\,\dd x=:\a m.$$
Now we construct the following measures $\tilde{\g},\eta\in\cP(\Om\times\Om)$ as
$$\tilde{\g}:=\g-\g\mres(B(x_0,r)\times B(y_0,r))+\eta \quad {\rm and}\quad \eta:=\a(f-\ro)\dd x\mres K\otimes (\pi^y)_\#\g\mres(B(x_0,r)\times B(y_0,r)).$$
It is immediate to check that $(\pi^y)_\#\tilde{\g}=\mu.$ On the other hand
$$\tilde{\ro}:=(\pi^x)_\#\tilde{\g}=\ro-\ro\mres B(x_0,r)+\a(f-\ro)\mres K\le f$$
is an admissible competitor in Problem \eqref{op} and we have the following
\begin{align*}
W_2^2(\tilde{\ro},\mu)&\le \int_{\Omega\times\Omega}|x-y|^2\,\dd\tilde{\g}(x,y)\\
& \le W_2^2(\ro,g)-\int_{B(x_0,r)\times B(y_0,r)}|x-y|^2\,\dd\g(x,y)+\int_{K\times B(y_0,r)}|x-y|^2\,\dd\eta(x,y)\\
&\le W_2^2(\ro,g)-(R-2r)^2\a m + (R-\e+r)^2\a m.
\end{align*}
Now if we chose $r>0$ small enough to have $R-2r>R-\e+r,$ i.e. $r<\e/3$ we get that 
$$W_2^2(\tilde{\ro},g)<W_2^2(\ro,g),$$
which is clearly a contradiction, hence the result follows.
\end{proof}

The following proposition establishes uniqueness of the projection on \(K_f\) as well as a very precise description of it. For a given measure \(\mu\) we are going to denote by \(\mu^{\rm ac}\) the density of its absolutely continuous part with respect to the Lebesgue measure, i.e.
\[
\mu=\mu^{\rm ac} \dd x+ \mu^{s},
\]
 with \(\mu^s\perp \dd x\). The following result recalls corresponding results in the {\it partial transport problem} (\cite{figa2}).
\begin{proposition}\label{existlemma}
Let $\Omega\subset\R^d$ be a convex set and let $f\in L_{\rm loc}^1(\Omega) $, \(f\ge 0\) be such that  $ \int_{\Omega}f\ge 1$. Then, for every probability measure $\mu \in \mathcal P(\Omega)$, there is a unique solution $\tm$ of the problem \eqref{op}. Moreover, $\tm$ is of the form 
\begin{equation}\label{structproj}
\tm=\mu^{\rm ac} \ind_{B}+f \ind_{B^c},
\end{equation}
for a measurable set $B\subset\Omega$.
\end{proposition}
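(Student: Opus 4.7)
Existence for \eqref{op} has already been settled at the start of the section by the direct method, so the two non-trivial assertions are the structural formula \eqref{structproj} and uniqueness. My plan is to prove the structural formula first and then deduce uniqueness from it, combined with the convexity of the map $\ro\mapsto W_2^2(\ro,\mu)$ on $\mathcal{P}_2(\Omega)$.

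\emph{Structure.} Fix an optimizer $\bar\ro$ of \eqref{op} and let $\gamma$ be the optimal plan from $\bar\ro$ to $\mu$ given by Theorem \ref{struct}(i). Introduce the open set
\[
U:=\{y\in\Omega\,:\,\bar\ro=f\text{ a.e.\ in some neighborhood of }y\},
\]
which coincides with $\{\bar\ro=f\}$ up to a Lebesgue null set (Lebesgue differentiation). Write $\gamma=\gamma_D+\gamma_0$, where $\gamma_D:=\gamma\mres\{x=y\}=(id,id)_\#\nu$ and $\gamma_0:=\gamma-\gamma_D$. Applying Lemma \ref{rho_eq_f} to any $(x_0,y_0)\in\spt(\gamma)$ with $x_0\neq y_0$ yields $y_0\in U$; exhausting $\{x\neq y\}=\bigcup_n\{|x-y|\geq 1/n\}$ by closed slabs and applying the lemma on each, one concludes that the second marginal $(\pi^y)_\#\gamma_0$ is concentrated in $U$. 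On the source side I would invoke the Kantorovich-type characterization recalled in the ``Characterization'' paragraph: there exist a potential $\varphi$ and a threshold $\ell$ with $\bar\ro=f$ on $\{\varphi<\ell\}$, $\bar\ro=0$ on $\{\varphi>\ell\}$ and $\nabla\varphi=0$ a.e.\ on $\{\varphi=\ell\}\cap\{\bar\ro>0\}$, so that the optimal map $T(x)=x-\nabla\varphi(x)$ is the identity a.e.\ on $\{0<\bar\ro<f\}$. Consequently no off-diagonal mass originates from $\{0<\bar\ro<f\}$, and since $\bar\ro=0$ on $\{\varphi>\ell\}$ the first marginal $(\pi^x)_\#\gamma_0$ is concentrated in $\{\bar\ro=f\}$, which equals $U$ up to a null set. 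Setting $B:=\Omega\setminus U$, both restrictions of $\gamma_0$ to $\Omega\times B$ and $B\times\Omega$ vanish, so
\[
\mu\mres B=\nu\mres B=\bar\ro\mres B
\]
as measures; since $\bar\ro\in L^1$, this forces $\mu^s\mres B=0$ and $\bar\ro=\mu^{\rm ac}$ a.e.\ on $B$, while trivially $\bar\ro=f$ a.e.\ on $B^c=U$. This is precisely \eqref{structproj}.

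\emph{Uniqueness.} Convexity of $W_2^2(\cdot,\mu)$ on $\mathcal{P}_2(\Omega)$ is immediate: if $\gamma_i\in\Pi(\ro_i,\mu)$ is optimal, then $(1-t)\gamma_1+t\gamma_2\in\Pi((1-t)\ro_1+t\ro_2,\mu)$ is admissible and the cost is linear in the plan. Hence, given two solutions $\bar\ro_1,\bar\ro_2$ of \eqref{op}, the interpolation $\bar\ro_t:=(1-t)\bar\ro_1+t\bar\ro_2\in K_f$ is still a minimizer for every $t\in(0,1)$. Applying the structure formula to $\bar\ro_1,\bar\ro_2$ and $\bar\ro_t$ produces Borel sets $B_1,B_2,B_t$ with $\bar\ro_j=\mu^{\rm ac}\ind_{B_j}+f\ind_{B_j^c}$, and analogously for $\bar\ro_t$. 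A pointwise case analysis on $B_1\cap B_2$, $B_1^c\cap B_2^c$ and the symmetric difference $B_1\triangle B_2$ then concludes: on the first two sets $\bar\ro_1=\bar\ro_2$ automatically, whereas on $B_1\triangle B_2$ we have $\bar\ro_t=(1-t)\mu^{\rm ac}+tf$, which must coincide pointwise with either $\mu^{\rm ac}$ or $f$ by the form of $\bar\ro_t$; for $t\in(0,1)$ this forces $\mu^{\rm ac}=f$ a.e.\ on $B_1\triangle B_2$, and hence $\bar\ro_1=\bar\ro_2$ there as well.

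\emph{Main obstacle.} The delicate point is the source-side statement, namely that $(\pi^x)_\#\gamma_0$ is concentrated in $U$. Lemma \ref{rho_eq_f} is asymmetric and provides information only on the target $y_0$ of an off-diagonal pair $(x_0,y_0)$; a putative symmetric version in which $B(y_0,|x_0-y_0|)$ is replaced by $B(x_0,|x_0-y_0|)$ cannot be obtained by the same swapping argument, because re-routing mass starting from a point close to $x_0$ may not shorten the transport length. For this reason the first-order (Kantorovich-type) optimality condition is indispensable to control where off-diagonal mass can originate; without it one only derives the one-sided inequality $\mu^{\rm ac}\leq\bar\ro$ a.e.\ on $B$, which is not enough to identify $\bar\ro$ with $\mu^{\rm ac}$ there.
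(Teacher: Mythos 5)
Your target-side argument and your uniqueness argument are sound (the latter is essentially the paper's: convexity of $W_2^2(\cdot,\mu)$ plus the saturation formula applied to the midpoint), but the source-side step is where the genuine gap lies. To show that $(\pi^x)_\#\gamma_0$ is concentrated on the saturated set you invoke the threshold characterization ($\bar\ro=f$ on $\{\varphi<\ell\}$, $\bar\ro=0$ on $\{\varphi>\ell\}$, $\nabla\varphi=0$ a.e.\ on the level set). The paper only \emph{recalls} that characterization, attributing it to Roudneff-Chupin's thesis in the case $f=1$, and then explicitly announces that the point of Proposition \ref{existlemma} is to recover the saturation and uniqueness ``in the largest possible generality, i.e.\ for general $f$, and without the assumptions on $\mu$ and $\Omega$''. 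The standard derivation of that characterization differentiates $\ro\mapsto\tfrac12W_2^2(\ro,\mu)$ along internal perturbations, and the first-variation formula of Theorem \ref{struct}(vi) is only justified when the second marginal has connected open support; for a singular $\mu$ (which the statement allows) one only retains a subgradient inequality $W(\tilde\ro)\ge W(\bar\ro)+\int\varphi\,\dd(\tilde\ro-\bar\ro)$, which points the wrong way and does not yield the needed $\int\varphi\,\dd(\tilde\ro-\bar\ro)\ge0$. So, as written, your proof rests on an optimality condition that is at least as delicate to establish in this generality as the proposition itself.

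Moreover, your ``main obstacle'' diagnosis is not quite right, and this is exactly where the paper's proof diverges from yours: the source side \emph{can} be controlled by Lemma \ref{rho_eq_f} alone. If $x_0$ is a density-one point of $\{\bar\ro<f\}$ and a Lebesgue point of $\bar\ro$ and $f$, then any $(x_0,y_0)\in\spt(\gamma)$ with $y_0\ne x_0$ places $x_0$ on the boundary of the ball $B(y_0,|x_0-y_0|)$ on which $\bar\ro=f$ a.e.; that ball has density $\tfrac12$ at $x_0$, contradicting the density-one assumption. One does not need a full saturated ball around $x_0$. Hence at such points the plan is diagonal in both directions, and the paper concludes with the elementary marginal comparison \eqref{treno1} applied to $\gamma$ and to its reversal, which gives $\bar\ro=\mu^{\rm ac}$ a.e.\ there without ever introducing a potential. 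A second, more minor flaw: your claim that the open saturated region $U$ coincides with $\{\bar\ro=f\}$ up to a null set does not follow from Lebesgue differentiation ($\{\bar\ro=f\}$ may contain a fat Cantor set disjoint from $U$); this is harmless only if you replace $U$ by $\{\bar\ro=f\}$ and take $B=\{\bar\ro<f\}$ throughout, which your marginal argument in fact permits.
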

\begin{proof} We first note that by setting \(f=0\) on \(\Omega^c\) we can assume that \(\Omega=\R^d\). Existence of a solution in Problem \ref{op} follows by the direct methods in the calculus of variations by noticing that the set \(K_f\) is closed with respect to the weak convergence of measures.

Let us prove now the saturation result \eqref{structproj}. Let us first premise the following fact:  if  $\mu,\nu\in\cP(\Omega)$,  $\g\in\Pi(\mu,\nu)$ and   we define the set 
$$
A(\gamma):=\{x\in\Om: {\rm the\ only\ point\ } (x,y)\in\spt (\g)\ {\rm is\ } (x,x)\},
$$
then 
\begin{equation}\label{treno1}
\mu\mres A(\gamma)\le \nu\mres A(\gamma). 
\end{equation}
In particular  \(\mu^{\rm ac}\le \nu^{\rm ac}\) for\ a.e.\ \(x\in A(\g)\). To prove \eqref{treno1}, let  $\phi\geq 0$ and write
\begin{equation*}
\begin{split}
\int_{A(\gamma)} \phi\,\dd\mu=\int \phi(x)\ind_{A(\gamma)}(x)\,\dd\gamma(x,y)&=\int \phi(x)\ind_{A(\gamma)}^2(x)\,\dd\gamma(x,y)\\
&=\int \phi(y)\ind_{A(\gamma)}(y)\ind_{A(\gamma)}(x)\,\dd\gamma(x,y)\\
&\leq \int \phi(y)\ind_{A(\gamma)}(y)\,\dd\gamma(x,y)=\int_{A(\gamma)}\phi\,\dd\nu,
\end{split}
\end{equation*}
where we used the fact that $\gamma-$a.e. $\ind_{A(\gamma)}(x)>0$ implies $x=y$.

Now, for  an optimal transport plan $\g\in\Pi(\ro,\mu)$, let us define 
\[
B:=\Leb(f)\cap\Leb(\mu^{\rm ac})\cap\Leb(\ro)\cap\{\ro<f\}^{(1)} \cap A(\gamma)^{(1)}\cap A(\tilde\gamma)^{(1)}.
\]
Here $\tilde\gamma\in\Pi(g,\ro)$ is the transport plan obtained by seeing $\gamma$ ``the other way around'', i.e. $\tilde\gamma$ is the image of $\gamma$ through the maps $(x,y)\mapsto (y,x)$ while  \(\Leb (h)\) is the set of  Lebesgue points of \(h\) and for a set \(A\) we denote by  \(A^{(1)}:=\Leb(\ind_A)\) the set of its  density  one points.

\medskip 
Let now  $x_0\in B$ and let us consider the following two cases:

\medskip
\noindent
\textit{Case 1.} $\ro(x_0)<\mu^{\rm ac} (x_0)$. Since, in particular, $\mu^{\rm ac} (x_0)>0$ and  \(x_0\in \Leb(\mu^{\rm ac})\) we have that  $x_0\in\spt(\mu ).$ From Lemma \ref{rho_eq_f} wee see that $(y_0,x_0)\in\spt(\g)$ implies $y_0=x_0$. Indeed if this were not the case   there would exist a ball where $\ro=f$ a.e. and $x_0$ would be in the middle of this ball; from $x_0\in \Leb(f)\cap\Leb(\ro)$ we would get $\ro(x_0)=f(x_0)$ a contradiction with \(x_0\in B\).  Hence, if we use the set $A(\tilde\g)$ defined above with   $\nu=\ro$, we have $x_0\in A(\tilde\g)$. From $x_0\in \Leb(\mu^{\rm ac})\cap\Leb(\ro)$ we get $\mu^{\rm{ac}} (x_0)\le \ro(x_0),$ which is a contradiction.

\textit{Case 2.} $\mu^{\rm ac} (x_0)<\ro(x_0).$ Exactly as in the previous case we have that $x_0\in\spt(\ro)$ and, by the Lemma \ref{rho_eq_f}, we have again that $(x_0,y_0)\in\spt(\g)$ implies $y_0=x_0$. Indeed, otherwise  $x_0$ would be on the boundary of a ball where $\ro=f$ a  contradiction with \(x_0\in \{\ro<f\}^{(1)}\). Hence, we get $x_0\in A(\g)$ and $\ro(x_0)\leq \mu^{\rm ac} (x_0)$,   again a contradiction.

Hence we get that \(\mu^{\rm{ac}}=\ro\) for \(x\in B\). By the definition of \(B\),
\[
B^c\subset_{\rm  a.e.} \{\ro=f\}\cup A(\gamma)^c\cup A(\tilde \gamma)^c\,,
\]
where a.e. refers to the Lebesgue measure. By applying Lemma \ref{rho_eq_f},  this implies that \(\varrho=f\)   a.e. on \(B^c\), and  concludes the proof of \eqref{structproj}.

Uniqueness of the projection it is now an immediate consequence of the  saturation property \eqref{structproj}. Indeed, suppose that $\ro_0$ and $\ro_1$ were two different projections of a same measure $g$. Define $\ro_{1/2}=\frac 12 \ro_0+\frac 12 \ro_1$. Then, by convexity of $W_2^2(\cdot,\mu)$, we get that $\ro_{1/2}$ is also optimal. But its density is not saturated on the set where the densities of $\ro_0$ and $\ro_1$ differ, in  contradiction with \eqref{structproj}.
\end{proof}
%
%

\begin{corollary} \label{cor:cont}For fixed $f$, the map $P_{K_f}:\pical_2(\Omega)\to\pical_2(\Omega)$ defined through 
$$P_{K_f}[\mu ]:=\argmin \{W_2^2(\ro,\mu)\ :\ \ro\in K_f\}$$ is continuous in the following sense: if $\mu_n\to \mu$ for the $W_2$ distance, then $P_{K_f}[\mu_n]\deb P_{K_f}[\mu]$ in the weak convergence.

Moreover, in the case where $f=1$ and $\Omega$ is a convex set, the projection is also locally $\frac 12-$H\"older continuous for $W_2$ on the whole $\pical_2(\Omega)$ and satisfies \eqref{holder}.
\end{corollary}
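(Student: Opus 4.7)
The plan for the weak continuity claim is a standard compactness-plus-uniqueness argument based on Proposition \ref{existlemma}. Given $\mu_n \to \mu$ in $W_2$, set $\ro_n := P_{K_f}[\mu_n]$. Fix any competitor $\sigma \in K_f \cap \mathcal{P}_2(\Omega)$; such a $\sigma$ exists because $f \in L^1_{\mathrm{loc}}$ with $\int_\Omega f \ge 1$ allows one to construct a compactly supported admissible probability density by truncation. Optimality gives $W_2(\ro_n, \mu_n) \le W_2(\sigma, \mu_n)$, and since $W_2(\mu_n, \mu) \to 0$, both $W_2(\ro_n, \mu_n)$ and $W_2(\ro_n, \mu)$ stay uniformly bounded. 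In particular $(\ro_n)$ is tight with uniformly bounded second moments, and any subsequential weak limit $\ro^*$ satisfies $\ro^* \le f$ (the inequality passes to the limit tested against nonnegative $C_c$ functions), hence $\ro^* \in K_f$.

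To identify $\ro^* = P_{K_f}[\mu]$, I would combine the one-sided bound
\begin{equation*}
\limsup_n W_2^2(\ro_n, \mu_n) \le \lim_n W_2^2(\sigma, \mu_n) = W_2^2(\sigma, \mu)
\end{equation*}
with the joint lower semicontinuity $W_2^2(\ro^*, \mu) \le \liminf_n W_2^2(\ro_n, \mu_n)$ under weak convergence of both marginals (legitimate thanks to the uniform second-moment control). This gives $W_2^2(\ro^*, \mu) \le W_2^2(\sigma, \mu)$ for every admissible $\sigma$, so $\ro^*$ is a minimizer, and Proposition \ref{existlemma} forces $\ro^* = P_{K_f}[\mu]$. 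Since every weakly convergent subsequence has the same limit, the full sequence converges weakly to $P_{K_f}[\mu]$.

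For the local $\frac{1}{2}$-Hölder estimate in the case $f = 1$ with $\Omega$ convex, the plan is to follow the displacement-convexity argument of \cite{aude phd}, Proposition 2.3.4. The key ingredient is the convexity of $K_1$ along generalized geodesics with absolutely continuous pivot, already invoked in the uniqueness discussion: given absolutely continuous $g^0, g^1$ with optimal Brenier maps $T^i$ from $g^i$ to $\ro^i := P_{K_1}[g^i]$ and the optimal map $R$ from $g^0$ to $g^1$, the curve $\ro_t := ((1-t) T^0 + t\, T^1 \circ R)_\# g^0$ lies in $K_1$ for all $t \in [0,1]$. Using $(T^0, T^1 \circ R)_\# g^0$ as a (non-optimal) coupling of $\ro^0$ and $\ro^1$ yields $W_2^2(\ro^0, \ro^1) \le \int |T^0(x) - T^1(R(x))|^2 \,\dd g^0(x)$; expanding this integrand around $|x - R(x)|^2$ and using Cauchy--Schwarz to control the cross-term in terms of $\|\mathrm{id} - T^i\|_{L^2(g^i)} = W_2(g^i, \ro^i) = \mathrm{dist}(g^i, K_1)$ produces exactly the inequality \eqref{holder}.

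The main obstacle in the first part is justifying the joint lower semicontinuity $W_2^2(\ro^*, \mu) \le \liminf W_2^2(\ro_n, \mu_n)$ on a possibly unbounded $\Omega$; the uniform second-moment bound on $\ro_n$, together with the $W_2$ convergence $\mu_n \to \mu$, is what prevents mass escape at infinity. In the second part the genuinely restrictive step is the preservation of the pointwise bound $\ro_t \le 1$ along the generalized geodesic, which uses the constancy (or concavity) of the upper bound and is exactly why the Hölder estimate is stated only for $f = 1$.
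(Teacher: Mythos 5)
Your proof of the first claim is correct and is essentially the paper's argument: tightness from a triangle-inequality bound, weak closedness of $K_f$, lower semicontinuity of $W_2$ along the extracted subsequence, and the uniqueness from Proposition \ref{existlemma} to identify the limit. The only cosmetic difference is that you invoke joint lower semicontinuity of $W_2^2$ in both arguments, whereas the paper fixes the second argument at $\mu$ and passes to the limit in the single inequality \eqref{treno}; both work.

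The second claim is where there is a genuine gap, on two counts. First, you spend your effort re-deriving \eqref{holder} for absolutely continuous $g^0,g^1$, but the sketch as written does not close: using $(T^0,T^1\circ R)_{\#}g^0$ as a coupling and expanding $|T^0(x)-T^1(R(x))|^2$ with Cauchy--Schwarz gives at best
\[
W_2^2(\ro^0,\ro^1)\le W_2^2(g^0,g^1)+2\,W_2(g^0,g^1)\bigl(d_0+d_1\bigr)+\bigl(d_0+d_1\bigr)^2,
\qquad d_i:=\dist(g^i,K_1),
\]
which has an extra $(d_0+d_1)^2$ term and a factor $2$ compared to \eqref{holder}. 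To remove these you must actually use the minimality of $\ro^0$ and $\ro^1$, i.e.\ the first-order conditions $\frac{\dd}{\dd t}\big|_{t=0^+}W_2^2(\ro_t,g^i)\ge 0$ along the generalized geodesic staying in $K_1$ (the obtuse-angle inequalities); this is the content of Proposition 2.3.4 of \cite{aude phd}, which the paper simply cites rather than reproves. Second, and more importantly, the actual content of the corollary's ``Moreover'' part is the extension of \eqref{holder} from absolutely continuous data to the \emph{whole} of $\pical_2(\Omega)$, and this step is absent from your proposal: one must approximate arbitrary $\mu^0,\mu^1$ in $W_2$ by absolutely continuous $g^i_n$ (e.g.\ by convolution), apply \eqref{holder} to $g^0_n,g^1_n$, and pass to the limit using the weak continuity of $P_{K_1}$ established in the first part together with the lower semicontinuity of $W_2$ on the left-hand side and the continuity of $W_2(g^0_n,g^1_n)$ and of $\dist(\cdot,K_1)$ on the right-hand side.
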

\begin{proof}
This is just a matter of compactness and uniqueness. Indeed, take a sequence $\mu_n\to \mu$ and look at $P_{K_f}[\mu_n]$. It is a tight sequence of measures since 
\begin{equation}\label{treno}
W_2(P_{K_f}[\mu_n],\mu)\leq W_2(P_{K_f}[\mu_n],\mu_n)+W_2(\mu_n,\mu )\leq W_2(\ro,\mu)+2W_2(\mu_n,\mu)\,,
\end{equation}
where $\ro\in K_f$ is any admissible measure.  Hence we can extract a weakly converging subsequence to some measure \(\tilde \ro \in K_f\) (recall that \(K_f\) is weakly closed). Moreover, by the lower semicontinuity of $W_2$ with respect to  the weak convergence and since \(W_2(\mu_n,\mu)\to0\), passing to the limit in \eqref{treno} we get
\[
W_2(\tilde \ro, \mu)\le W_2(\ro,\mu)\qquad \forall\, \ro \in K_f.
\]
Uniqueness of the projection implies \(\tilde \ro=P_{K_f} (\mu)\) and thus that the limit is independent on the extracted subsequence, this proves the desired continuity.

Concerning the   second part of the statement, we take arbitrary $\mu^1$ and $\mu ^2$ (not necessarily absolutely continuous) and we approximate them in the $W_2$ distance with absolutely continuous measures $g^i_n$ ($i=1,2$; for instance by convolution), then we have, from \eqref{holder} 
$$W_2^2(P_{K_1}[g^0_n],P_{K_1}[g^1_n])\leq W_2^2(g^0_n,g^1_n)+W_2(g^0_n,g^1_n)(\dist(g^0_n,K_1)+\dist(g^1_n,K_1)),$$
 and we can pass to the limit as $n\to \infty$.
\end{proof}

The following technical lemma will be used in the next section and establishes the continuity of the projection with respect to \(f\). To state it let us consider, for given \(f\in L^1_{\rm loc}\) and \(\mu \in \mathcal P_2(\Omega)\), the following functional 
\[
\mathcal F_f(\tm):=
\begin{cases}
\frac12 W_2^2(\mu,\tm),\quad&\textrm{if  $\ro \in K_f $}\\
+\infty, &\textrm{otherwise.}
\end{cases}
\]
Proposition \ref{existlemma} can be restated by saying that the functional $\mathcal F_f$ has a unique minimizer in $\mathcal{P}_2(\Om).$

\begin{lemma}\label{gammaconvfnf}

Let \(f_n\,, f\in L^1_{\rm loc}(\Omega)\) with \(\int_{\Omega} f_n\,\dd x\ge 1,\int_{\Omega} f\,\dd x\geq 1\) and let us assume that \(f_n\to f\) in \(L^1_{\rm loc}(\Omega)\) and almost everywhere. Also assume $f_n\in \mathcal P_2(\Omega)$ if $\int_\Omega f_n\,\dd x=1$ and $f\in \mathcal P_2(\Omega)$ if $\int_\Omega f\,\dd x=1$. Then, for every $\mu\in \mathcal P_2(\Omega)$,
\begin{enumerate}
\item[(i)] The sequence $(P_{K_{f_n}}(\mu))_n$ is tight.
\item[(ii)] We have \(P_{K_{f_n}}(\mu)\deb P_{K_f}(\mu)\).
\item[(iii)] If $\int_\Omega  f>1,$ then \(\mathcal F_{f_n}\) \(\Gamma-\)converges to \(\mathcal F_{f}\) with respect to the weak convergence of measures. 
\end{enumerate}
\end{lemma}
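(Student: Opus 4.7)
My plan is to reduce all three claims to a single construction: for each $\sigma\in K_f$ with $W_2(\sigma,\mu)<\infty$, I will build a \emph{recovery sequence} $\sigma_n\in K_{f_n}$ with $W_2(\sigma_n,\sigma)\to 0$. With this in hand, (i) and (ii) follow from a compactness-plus-uniqueness argument based on Proposition~\ref{existlemma}, while (iii) reduces to a routine verification of the $\Gamma$-liminf and $\Gamma$-limsup inequalities.

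For the recovery sequence (the technical heart of the proof, and where the hypothesis $\int f>1$ enters), the naive choice $\sigma_n:=\min(\sigma,f_n)$ satisfies $\sigma_n\le f_n$ but has total mass $m_n:=\int\min(\sigma,f_n)\,\dd x\to 1$ by dominated convergence with dominant $\sigma$, so it is off by a vanishing deficit. To restore unit mass without spoiling the second moment I would fix a bounded set $E\subset\Omega$ with $\int_E f>1$ (possible since $\int_\Omega f>1$; by $L^1_{\rm loc}$ convergence one has $\int_E f_n>1$ eventually), and add the deficit inside $E$:
\[
\sigma_n:=\min(\sigma,f_n)+\theta_n\bigl[f_n-\min(\sigma,f_n)\bigr]\ind_E,\qquad \theta_n:=\frac{1-m_n}{\int_E\bigl[f_n-\min(\sigma,f_n)\bigr]\,\dd x}.
\]
The denominator converges to $\int_E f-\int_E\sigma\ge \int_E f-1>0$ while the numerator tends to $0$, hence $\theta_n\to 0$ and $\theta_n\in[0,1]$ eventually, so $\sigma_n$ is on $E$ a convex combination of non-negative quantities bounded by $f_n$; thus $\sigma_n\in K_{f_n}$. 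The a.e.\ convergence $\sigma_n\to\sigma$ and $\int\sigma_n\,\dd x=\int\sigma\,\dd x=1$ yield $\sigma_n\to\sigma$ in $L^1(\Omega)$ by Scheff\'e's lemma; dominated convergence on the main summand and the estimate $\theta_n\int_E|x|^2 f_n\,\dd x\le \theta_n\sup_E|x|^2\cdot\int_E f_n\,\dd x\to 0$ on the corrector give $\int|x|^2\sigma_n\,\dd x\to\int|x|^2\sigma\,\dd x$. Weak convergence together with convergence of the second moments yields $W_2(\sigma_n,\sigma)\to 0$.

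Claims (i) and (ii) will then follow quickly. Applying the construction to $\sigma:=P_{K_f}(\mu)$ produces $\sigma_n\in K_{f_n}$ with $W_2(\sigma_n,\mu)$ bounded, and minimality of $\rho_n:=P_{K_{f_n}}(\mu)$ gives $W_2(\rho_n,\mu)\le W_2(\sigma_n,\mu)\le C$; the triangle inequality $\bigl(\int|x|^2\,\dd\rho_n\bigr)^{1/2}\le W_2(\rho_n,\mu)+\bigl(\int|x|^2\,\dd\mu\bigr)^{1/2}$ then yields a uniform second moment bound, hence tightness. For (ii), let $\tilde\rho$ be a weak limit of a subsequence; testing $\rho_n\le f_n$ against non-negative $\phi\in C_c(\Omega)$ and passing to the limit (weak convergence on the left, $L^1_{\rm loc}$-convergence of $f_n$ on the right) gives $\tilde\rho\le f$, so $\tilde\rho\in K_f$. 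For any competitor $\tau\in K_f$ with $W_2(\tau,\mu)<\infty$, the associated $\tau_n\in K_{f_n}$ satisfies $W_2(\rho_n,\mu)\le W_2(\tau_n,\mu)\to W_2(\tau,\mu)$, so l.s.c.\ of $W_2$ under weak convergence yields $W_2(\tilde\rho,\mu)\le W_2(\tau,\mu)$; uniqueness in Proposition~\ref{existlemma} identifies $\tilde\rho=P_{K_f}(\mu)$ and promotes subsequential convergence to convergence of the full sequence.

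Finally for (iii), the $\Gamma$-liminf inequality is the same test-function argument: if $\rho_n\rightharpoonup\rho$ and $\liminf \mathcal F_{f_n}(\rho_n)<\infty$, a subsequence has $\rho_n\le f_n$ forcing $\rho\le f$, and l.s.c.\ of $W_2$ gives $\mathcal F_f(\rho)\le\liminf \mathcal F_{f_n}(\rho_n)$. The $\Gamma$-limsup inequality is trivial when $\mathcal F_f(\rho)=+\infty$ (take $\rho_n\equiv\rho$); otherwise $\rho\in K_f$ with $W_2(\rho,\mu)<\infty$ and the recovery sequence above supplies the required $\sigma_n$ with $\mathcal F_{f_n}(\sigma_n)\to\mathcal F_f(\rho)$. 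I expect the main obstacle to be precisely this recovery construction: achieving $\sigma_n\le f_n$, unit mass, and $W_2$-convergence simultaneously forces the corrective mass to be localized in a bounded set where $\int_E f_n>1$, and this is why the assumption $\int f>1$ is indispensable in (iii) (and enters (ii) through the choice of a good admissible competitor).
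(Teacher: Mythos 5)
Your recovery-sequence construction and the $\Gamma$-convergence argument in (iii) are correct and essentially equivalent to the paper's: the paper also starts from $\min\{\ro,f_n\}$ and restores the missing mass on a small subset of a fixed bounded set $B$ with $\int_B f>1$, so your $\theta_n$-interpolation on $E$ is only a cosmetic variant of the same idea. The $\Gamma$-liminf step and the identification of weak subsequential limits of the projections via uniqueness (Proposition \ref{existlemma}) are also fine, \emph{granted tightness}.

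The genuine gap is in (i), and hence in (ii), which are asserted for $\int_\Omega f\ge 1$ and not only for $\int_\Omega f>1$. Your tightness proof builds a competitor $\sigma_n\in K_{f_n}$ with $W_2(\sigma_n,\mu)\le C$ and deduces a uniform second-moment bound on $P_{K_{f_n}}(\mu)$; but the construction of $\sigma_n$ needs a bounded set $E$ with $\int_E f>1$, which does not exist when $\int_\Omega f=1$. Worse, the intermediate conclusion you are after is simply false in that case: take $\Omega=\R$, $f=\ind_{[0,1]}$, $f_n=(1-\tfrac1n)\ind_{[0,1]}+\tfrac1n\ind_{[n^2,n^2+1]}$ and $\mu=\ind_{[0,1]}$. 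All hypotheses of the lemma hold, $\int_\Omega f_n=1$ forces $K_{f_n}=\{f_n\}$ and $P_{K_{f_n}}(\mu)=f_n$, and $\int|x|^2f_n\ge n^3\to\infty$: the projections are tight and converge weakly to $f$, as the lemma claims, yet they have no uniform second moment and $W_2(P_{K_{f_n}}(\mu),\mu)\to\infty$. So no argument routed through moment or $W_2$ bounds can close this case. The paper proves tightness by a different, geometric mechanism based on Lemma \ref{rho_eq_f}: if $P_{K_{f_n}}(\mu)$ placed mass $\ve$ outside a large ball $B(0,R)$, some of it would be transported into a small ball $B(0,R_0)$ carrying most of $\mu$ and of $\int f_n$, forcing $P_{K_{f_n}}(\mu)=f_n$ on a ball containing $B(0,R_0)$ and hence accounting for mass at least $1-\ve$ there, a contradiction. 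You would need to add such an argument (or some other device not based on second moments) to cover $\int_\Omega f=1$; once tightness is available there, (ii) follows in that case because any weak subsequential limit is a probability density bounded by $f$ with $\int_\Omega f=1$, hence equals $f=P_{K_f}(\mu)$.
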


\begin{proof} 
Let us denote by $\bar\varrho_n$ the projection $P_{K_{f_n}}(\mu)$ and let us start from proving its tightness, i.e. {\it (i)}. We fix $\ve>0$: there exists a radius $R_0$ such that $\mu(B(0,R_0))>1-\frac\ve 2$ and $\int_{B(0,R_0)}f>1-\frac\ve 2$. By $L^1_{\mathrm{loc}}$ convergence, there exists $n_0$ such that $\int_{B(0,R_0)}f_n>1-\ve$ pour $n>n_0$. Now, take $R>3R_0$ and suppose $\bar\varrho_n(B(0,R)^c)>\ve$ for $n\geq n_0$. Then, the optimal transport $T$ from $\bar\varrho_n$ to $\mu$ should move some mass from $B(0,R)^c$ to $B(0,R_0)$. Let us take a point $x_0\in B(0,R)^c$ such that $T(x_0)\in B(0,R_0)$. From Lemma \ref{rho_eq_f}, this means that $\bar\varrho_n=f_n$ on the ball $B(T(x_0),|x_0-T(x_0)|)\supset B(T(x_0),2R_0)\supset B(0,R_0)$. But this means $\int_{B(0,R_0)}\bar\varrho_n=\int_{B(0,R_0)}f_n>1-\ve$, and hence $\bar\varrho_n(B(0,R)^c)\leq\ve,$ which is a contradiction. This shows that $\bar\varrho_n$ is tight.

Now, if $\int_\Omega f=1$, then the weak limit of $\bar\varrho_n$ (up to subsequences) can only be $f$ itself, since it must be a probability density bounded from above by $f$ and $f=P_{K_f}(\mu)$. This proves {\it (ii)} in the case $\int_\Omega f=1$. In the case $\int_\Omega f>1$, this will be a consequence of {\it (iii)}. Notice that in this case we necessarily have $\int_\Omega f_n>1$ for $n$ large enough.

Let us prove {\it (iii)}.
Since \(\varrho_n\le f_n\), \(\varrho_n\deb \varrho\) and \(f_n\to f\) in \(L^1_{\rm loc}\) immediately implies that \(\varrho\le f\), the \(\Gamma-\)liminf inequality simply follows by the lower semicontinuity of \(W_2\).

Concerning the $\Gamma-$limsup, we need to prove that every density $\ro\in \pical_2(\Omega)$  with $\ro\leq f$ a.e. can be approximated by a sequence $\ro_n\leq f_n$ a.e. with $W_2(\ro_n,g)\to W_2(\ro,g)$. In order to do this let us  define $\tilde\ro_n:=\min\{\ro,f_n\}$. Note that  $\tilde\ro_n$ is  not admissible since it is not a probability, because in general $\int\tilde\ro_n<1$. Yet, we have $\int\tilde\ro_n\to  1$ since $\tilde\ro_n\to\min\{\ro,f\}=\ro$ and this convergence is dominated by $\ro$.  We want to ``complete'' $\tilde\ro_n$ so as to get a probability, stay admissible, and converge to $\ro$ in $W_2$, since this will imply that $W_2(\ro_n,\mu)\to W_2(\ro,\mu)$. 

Let us select a ball $B$ such that $\int_{B\cap \Omega} f>1$ and note that we can find $\ve>0$ such that the set $\{f>\ro+\ve\}\cap B$ is of positive measure, i.e. $m:=|\{f>\ro+\ve\}\cap B|>0$. Since $f_n\to f$ a.e., the set $B_n:=\{f_n>\ro+\frac\ve 2\}\cap B$ has measure larger than $m/2$ for large $n$. Now take $B_n'\subset B_n$ with $|B_n'|=\frac 2 \ve(1-\int \tilde\ro_n)\to 0$, and define 
\[
\ro_n:=\tilde\ro_n+\frac\ve 2 \ind_{B_n'}.
\]
By construction, $\int\ro_n=1$ and $\ro_n\leq f_n$ a.e. since on $B_n'$ we have $\tilde\ro_n=\ro$ and $\ro+\frac\ve 2<f_n$ while on  the complement of $B_n'$, $\tilde\ro_n\leq f_n$ a.e. by definition. To conclude the proof we   only need to check $W_2(\ro_n,\ro)\to 0$. This is equivalent (see, for instance, \cite{AmbGigSav} or \cite{villani}) to 
\begin{equation}
\int \phi\ro_n\to\int\phi \ro
\end{equation}
 for all continuous functions $\phi$ with such that \(\phi\le C(1+|x|^2)\). Since \(\ro\in \mathcal P_2(\Omega)\) and \(\tilde \ro_n\le\ro\), thank to the dominated convergence theorem it is enough to show that \(\int \phi(\ro_n-\tilde \ro_n)\to 0\). But \( \ro_n-\tilde \ro_n\) converges to \(0\) in \(L^1\) and it is supported in \(B_n' \subset B\). Since \(\phi\) is bounded on \(B\) we obtain the desired  conclusion.
\end{proof}

\begin{remark}
{\rm 
 Let us conclude this section with the following {\it open question}: for $f=1$ the projection is continuous and we can even provide H\"older bounds on $P_{K_1}$. 
The question whether $P_{K_1}$ is $1$-Lipschitz, as far as we know, is open. Let us underline that some sort of $1$-Lipschitz results have been proven in \cite{CarCra} for solutions of similar variational problems, but seem impossible to adapt in this framework.

For the case $f\neq1$ even the continuity of the projection with respect to the Wasserstein distance seems delicate.
 
%
 }
 \end{remark}

\subsection{BV estimates for $P_{K_f}$} In this section, we  prove Theorem \ref{mainthm2}. Notice that the case $f=1$ has already been proven as a particular case of Theorem \ref{mainthm1}. To handle the general case, we develop a slightly different strategy,  based on  the standard idea to approximate $L^\infty$ bounds with $L^p$ penalizations.

Let  $m\in\N$ and let us assume that  $\inf f>0$, for \(\mu\in \mathcal P_2(\Omega)\), we define the approximating functionals $\mathcal{F}_m:L_{+}^1(\Omega)\to\R\cup\{+\infty\}$ by
\begin{equation*}
\mathcal{F}_{m}(\tm):=\frac12 W_2^2(\mu,\tm)+\frac{1}{m+1}\int_\Omega \left(\frac{ \tm }{ f}\right)^{m+1}\,\dd x+ \frac{\ve_m}{2}\int_\Omega \left(\frac{ \tm }{ f}\right)^{2}\,\dd x
\end{equation*}
and the  limit functional $\mathcal F$ as 
\[
\mathcal F(\tm):=
\begin{cases}
\frac12 W_2^2(\mu,\tm),\quad&\textrm{if  $\ro \in K_f $}\\
+\infty, &\textrm{otherwise}
\end{cases}
\]
Here $\ve_m\downarrow 0$ is a small  parameter to be chosen later.
\bigskip
%
%

\begin{lemma}\label{approxlemma}
Let $\Omega\subset\R^d$ and $f:\Omega\to (0,+\infty)$ be a measurable function, bounded from below and from above by positive constants  and let \(\mu\in \mathcal{P}_2(\Omega)\). Then:
\begin{enumerate}
\item[(i)] There are unique minimizers $\tm$, $\tm_{m}$ in $L^1(\Omega)$ for each of the functionals $\mathcal{F}$ and $\mathcal{F}_{m}$, respectively.
\item[(ii)] The family of functionals $\mathcal{F}_m$ $\Gamma$-converges for the weak convergence of probability measures to $\mathcal{F}$, and the minimizers $\tm_m$ weakly converge to $\tm$, as $m\to \infty$.
\item[(iii)] The minimizers $\tm_m$ of $\mathcal{F}_m$ satisfy 
\begin{equation}\label{optconepsm}
\varphi_{m}+\left(\frac{\tm_{m}}{f}\right)^m\frac{1}{f} +\ve_m\left(\frac{\tm_{m}}{f}\right)\frac{1}{f}=0,\end{equation}
for a suitable Kantorovich potential $\varphi_m$ in the transport from $\tm_m$ to $\mu$.
\end{enumerate}   
\end{lemma}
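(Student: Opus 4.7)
The plan is to dispatch (i) and (iii) by direct-method and first-variation arguments, and to prove (ii) by a $\Gamma$-convergence argument whose main analytic step is an $L^{m+1}$-to-$L^\infty$ passage on the constraint. For (i), both $\mathcal{F}$ and each $\mathcal{F}_m$ are l.s.c.\ on $\mathcal P(\Omega)$ for the weak convergence of probability measures: the $W_2^2$-term by Theorem \ref{approx}, and the penalizations by Lemma \ref{lemma:h} (the $x$-dependence through $f$ is harmless in this setting, cf.\ \cite{But book,boubu}) applied to the convex, l.s.c., superlinear integrands $\tm \mapsto (\tm/f(x))^{m+1}/(m+1)$ and $\tm \mapsto (\tm/f(x))^2/2$. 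Existence of minimizers then follows from the direct method together with tightness of sublevel sets of $W_2^2(\mu,\cdot)$. Uniqueness of $\bar\tm$ is precisely Proposition \ref{existlemma}, while uniqueness of $\bar\tm_m$ is immediate from the strict convexity of the $L^{m+1}$ and $L^2$ penalizations. For (iii), I would compute the first variation directly: for any admissible perturbation $\chi = \tilde\tm - \bar\tm_m$ with $\int\chi\,\dd x = 0$, combining Theorem \ref{struct}(vi) with Lemma \ref{lemma:h} yields
\[
\lim_{\ve \to 0^+} \frac{\mathcal F_m(\bar\tm_m + \ve\chi) - \mathcal F_m(\bar\tm_m)}{\ve} = \int\!\left(\varphi_m + \left(\frac{\bar\tm_m}{f}\right)^{\!m}\frac{1}{f} + \ve_m\left(\frac{\bar\tm_m}{f}\right)\frac{1}{f}\right)\chi\,\dd x \geq 0,
\]
and arguing as in Corollary \ref{coro:h and W}, the bracketed expression must be constant on $\{\bar\tm_m > 0\}$; absorbing the constant into $\varphi_m$ (defined only up to an additive constant) gives \eqref{optconepsm}.

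For (ii), the $\Gamma$-limsup inequality is elementary: for $\tm \in K_f$, take the constant recovery sequence $\tm_m := \tm$; since $\tm/f \leq 1$ and $\inf f > 0$, one has $(\tm/f)^{m+1} \leq \tm/f$ with $\int(\tm/f)\,\dd x \leq (\inf f)^{-1}$, so both penalization terms tend to $0$ and $\mathcal F_m(\tm) \to \mathcal F(\tm)$. The $\Gamma$-liminf inequality is the key step: given a sequence $\tm_m \weak \tm$ with $\liminf \mathcal F_m(\tm_m) < +\infty$, I would pass to the liminf-realizing subsequence and set $u_m := \tm_m/f$. Hölder's inequality gives, for every bounded $B \subset \Omega$ and every finite $q$,
\[
\int_B u_m^q\,\dd x \leq |B|^{1 - q/(m+1)}\bigl(C(m+1)\bigr)^{q/(m+1)} \xrightarrow[m\to\infty]{} |B|,
\]
so $(u_m)$ is uniformly bounded in $L^q_{\mathrm{loc}}(\Omega)$ for every finite $q$. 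Extracting a further weakly convergent subsequence $u_m \weak u$ in $L^q_{\mathrm{loc}}$, the weak lower semicontinuity of the $L^q$ norm yields $\|u\|_{L^q(B)} \leq |B|^{1/q}$ for every $q$ and every bounded $B$, hence $u \leq 1$ a.e.\ upon letting $q \to \infty$. Since $f$ is bounded, $u_m f \weak u f$ locally, whence $\tm = u f \leq f$; combined with lower semicontinuity of $W_2^2(\mu,\cdot)$ this gives $\liminf \mathcal F_m(\tm_m) \geq \frac12 W_2^2(\mu, \tm) = \mathcal F(\tm)$. For convergence of the minimizers, I would fix any $\tm_0 \in K_f \cap \mathcal P_2(\Omega)$ (e.g.\ obtained by truncating and normalizing $f$); the bound $\mathcal F_m(\bar\tm_m) \leq \mathcal F_m(\tm_0) \leq C$ uniform in $m$ yields $W_2(\bar\tm_m, \mu) \leq M$, hence tightness, so the standard $\Gamma$-convergence principles combined with uniqueness of $\bar\tm$ give $\bar\tm_m \weak \bar\tm$.

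The principal obstacle will be the $\Gamma$-liminf inequality: the penalization $\frac{1}{m+1}\int(\tm/f)^{m+1}\,\dd x$ only provides an $L^{m+1}$-norm bound at scale $m$, not a uniform $L^p$ bound at any fixed $p$, so extracting the pointwise constraint $\tm \leq f$ in the weak limit requires the Hölder/weak-lower-semicontinuity scheme above. A secondary technical point is that Corollary \ref{coro:h and W} is only stated for bounded $\Omega$ and absolutely continuous, strictly positive $\mu$; in the more general setting of the lemma I would redo the first-variation computation of (iii) directly, relying on the generality of Theorem \ref{struct}(vi) and Lemma \ref{lemma:h}.
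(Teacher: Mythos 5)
Your proposal is correct and follows essentially the same route as the paper: direct method plus strict convexity for (i), the constant recovery sequence and the H\"older-based $L^{m+1}$-to-$L^\infty$ passage for the $\Gamma$-convergence in (ii), and the first-variation argument via Theorem \ref{struct}(vi) and Lemma \ref{lemma:h} as in Corollary \ref{coro:h and W} for (iii). Your treatment of the $\Gamma$-liminf (extracting $u_m\weak u$ in $L^q_{\rm loc}$ before identifying the limit) and of the tightness of the minimizers is in fact slightly more explicit than the paper's.
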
 
\begin{proof} Existence and uniqueness  of minimizers of \(\mathcal F\) has been established in Proposition \ref{existlemma}. Existence of minimizers of \(\mathcal F_m\) is again a simple application of the direct methods in the calculus of variations and uniqueness follows from strict convexity.

%

Let us prove the $\Gamma-$convergence in {\it (ii)}. In order to prove the $\Gamma-$liminf inequality, let  $\tm_m\deb\tm$. If  $\mathcal F_m(\tm_m)\leq C$, then for every $m_0\leq m$ and every finite measure set $A\subset\Omega$, we have 
\[
\|\tm_m/f\|_{L^{m_0}(A)}\leq |A|^{\frac{1}{m_0}-\frac{1}{m+1}} (C(m+1))^\frac{1}{m+1}.
\]
 If we pass to the limit $m\to\infty$, from $\frac{\tm_m}{f}\deb  \frac{\tm}{f}$, we get $||\tm/f||_{L^{m_0}(A)}\leq |A|^\frac{1}{m_0}$. Letting \(m_0\) go to infinity we obtain  $||\tm/f||_{L^{\infty}}\leq 1$, i.e. \(\ro \in K_f\). Since
 \[
 \mathcal F_m(\ro_m)\ge \frac 1 2 W_2^2(\mu,\ro_m),
 \]
 the lower semicontinuity of \(W_2^2\) with respect to weak converges proves the \(\Gamma-\)liminf inequality.
 
In order to prove  $\Gamma-$limsup, we use the constant sequence  $\tm_m=\tm$ as a recovery sequence. Since we can assume $\tm\leq f$ (otherwise there is nothing to prove, since  $\mathcal F(\tm)=+\infty$), it is clear that the second and third parts of the functional tend to $0$, thus proving the desired inequality.

The last part of the statement finally follows from Theorem \ref{struct} (vi) and Lemma \ref{lemma:h}, exactly as in Corollary \ref{coro:h and W}.
\end{proof}


{\bf Proof of Theorem \ref{mainthm2}}


\begin{proof} Clearly we can assume that $TV(g,\Omega) $ and   $TV(f,\Omega)$ are finite and that \(\int_\Omega f >1\) since otherwise the conclusion is trivial. 

\medskip
\noindent
\textit{Step 1.} Assume that the support of $g$ is compact, that $f\in C^\infty(\Omega)$  is bounded from above and below by positive constants,  and let  $\tm_{m}$ be the  minimizer of  $\mathcal{F}_{m}$. As in the proof of Theorem \ref{mainthm1}, we can use the optimality condition \eqref{optconepsm} to prove that $\tm$ is compactly supported. Also, the same condition imply that $\tm$ is Lipschitz continuous. Indeed, we can write \eqref{optconepsm} as
$$\varphi f + H_m'\left(\frac{\tm}{f}\right)=0,$$
where $H_m(t)=\frac{1}{m+1}t^{m+1}+\frac{\ve_m}{2}t^{2}.$  Since $H_m$ is smooth and convex and $H_m''$ is bounded from below by a positive constant $H_m'$ is  invertible and   
$$\ro=f \cdot(H_m')^{-1}(-\varphi f),$$
where $ (H_m')^{-1}$ is Lipschitz continuous. Since $\varphi$ and $f$ are locally Lipschitz, this gives Lipschitz continuity for $\ro$ on a neighborhood of its support.

Taking the derivative  of the optimality condition \eqref{optconepsm} we obtain
$$\nabla\varphi_{m} + \left(m\left(\frac{\tm_{m}}{f}\right)^{m-1}+\ve_m\right)\frac{f\nabla \tm_{m}-\tm_{m}\nabla f}{f^3}-\left(\left(\frac{\tm_{m}}{f}\right)^m+\ve_m\frac{\tm_m}{f}\right)\frac{\nabla f}{f^2}=0.$$
Rearranging the terms we have
$$\nabla\varphi_{m} + A\nabla\tm_{m}- B \nabla f=0,$$
where by $A$ and $B$ we denote the (positive!) functions 
$$A:=\left(m\left(\frac{\tm_{m}}{f}\right)^{m-1}\!\!+\!\ve_m\right)\frac{1}{f^2}\quad \hbox{and} \quad B:=\left(m\left(\frac{\tm_{m}}{f}\right)^{m-1}\!\!+\!\ve_m\right)\frac{\tm_m}{f^3}+\left(\left(\frac{\tm_{m}}{f}\right)^m+\ve_m\frac{\tm_m}{f}\right)\frac{1}{f^2}.$$ 
Now we will use the inequality from Corollary \ref{cormain} for $\tm_{m}$ and $g$ in the form
$$\int_{\Omega}|\nabla \tm_{m}|\,\dd x\le\int_{\Omega}|\nabla g|\,\dd x+\int_{\Omega}\nabla \tm_{m}\cdot\left(\frac{\nabla \tm_{m}}{|\nabla \tm_{m}|}+\frac{\nabla\varphi_{m}}{|\nabla\varphi_{m}|}\right)\,\dd x.$$
In order to estimate the second integral on the right-hand side we use the inequality 
\begin{equation}\label{ineqab}
\left|\frac{a}{|a|}-\frac{b}{|b|}\right|\le \left|\frac{a}{|a|}-\frac{b}{|a|}\right|+\left|\frac{b}{|a|}-\frac{b}{|b|}\right|= \frac{|a-b|}{|a|}+\frac{|b|-|a|}{|a|}\le \frac{2}{|a|}|a-b|,
\end{equation}
for all non-zero $a,b\in\R^d$ (that we apply to $a=A\nabla\ro_m$ and $b=-\nabla\varphi_m$), and we obtain 
\begin{align*}
\int_{\Omega}|\nabla \tm_{m}|\,\dd x & \le \int_{\Omega}|\nabla \gm|\,\dd x + \int_{\Omega}|\nabla \tm_{m}|\cdot\left|\frac{A\nabla \tm_{m}}{A|\nabla \tm_{m}|}+\frac{\nabla\varphi_{m}}{|\nabla\varphi_{m}|}\right|\,\dd x\\
&\le \int_{\Omega}|\nabla \gm|\,\dd x + 2 \int_{\Omega}\frac{1}{A}\big|A\nabla \tm_{m}+\nabla\varphi_{m}\big|\,\dd x\\
&\le \int_{\Omega}|\nabla \gm|\,\dd x + 2 \int_{\Omega}\frac{B}{A}|\nabla f|\,\dd x.
\end{align*}
We must now estimate the ratio $B/A$. If we denote by $\lambda$ the ratio $\tm_m/f$ we may write
$$\frac B A = \lambda+\lambda\frac{\ve_m+\lambda^{m-1}}{\ve_m+m\lambda^{m-1}}\leq \lambda\left(1+\frac 1m\right)+\frac{\ve_m\lambda}{\ve_m+m\lambda^{m-1}}.$$
Now, consider that 
$$\max_{\lambda\in\R_+}\frac{\ve_m\lambda}{\ve_m+m\lambda^{m-1}}=\frac{m-2}{m-1}\left(\frac{\eps_m}{m(m-2)}\right)^{1/(m-1)}=:\delta_m$$
 is a quantity depending on $m$ and tending to $0$ if $\ve_m$ is chosen small enough (for instance $\ve_m=2^{-m^2}$). This allows to write
 $$\int_{\Omega}|\nabla \tm_{m}|\,\dd x\leq \int_{\Omega}|\nabla \gm|\,\dd x + 2\left(1+\frac 1m\right) \int_{\Omega}\frac{\tm_m}{f}|\nabla f|\,\dd x+2\delta_m\int_{\Omega}|\nabla f|\,\dd x.$$
 
 In the limit, as $m\to+\infty$, we obtain 
\begin{align*}
\int_{\Omega}|\nabla \tm|\,\dd x&\le \int_{\Omega} |\nabla \gm|\,\dd x+2 \int_{\Omega} \frac{\tm}{f}|\nabla f|\,\dd x.
\end{align*}
Using the fact that $\ro\le f$, we get 
\begin{align*}
\int_{\Omega}|\nabla \tm|\,\dd x\le\int_{\Omega}|\nabla \gm|\,\dd x+2 \int_{\Omega}|\nabla f|\,\dd x.
\end{align*}
%
%

\medskip
\noindent 
\textit{Step 2.} To treat the case  $\gm, f\in BV_{\rm loc} (\Omega)$ we  proceed  by approximation as  in the proof of  Theorem  \ref{mainthm1}. To do this we just note that Corollary \ref{cor:cont} and  Lemma \ref{gammaconvfnf} give the desired continuity property of the projection with respect both to \(g\) and \(f\), lower semicontinuity of the total variation with respect to the weak convergence then implies the conclusion.
\end{proof}

\begin{remark}
{\rm
We conclude this section by underlining that the constant \(2\) in inequality \eqref{milano} can not be  replaced by any  smaller constant. Indeed if  $\Om=\R$, $f=\ind_{\R_+}$, $g=\frac 1n \ind_{[-n,0]}$ then  $\ro=P_{K_f}(g)=\ind_{[0,1]}$ and   $\int |\nabla \ro|=2$, $\int |\nabla f|=1$,  $\int |\nabla g|=\frac 2n$.
}
\end{remark}

\section{Applications}\label{6}
In this section we discuss some applications of Theorems \ref{mainthm1} and \ref{mainthm2} and we present some open problems.

\subsection{Partial transport}
The projection problem on \(K_f\) is a particular case of the so called \emph{partial transport problem},  see \cite{figa1,figa2}. Indeed, the problem is to transport $\mu$ to a part of the measure $f$, which is a measure with mass larger than $1$. As typical in the partial transport problem, the solution has an active region, which is given by $f$ restricted to a certain set. This set satisfies a sort of interior ball condition, with a radius depending on the distance between each point and its image. In the partial transport case some regularity ($C^{1,\alpha}$) is known  for the optimal map away  from the intersection of the supports of the two measures.

A natural question is how to apply the technique that we developed here in the framework of more general partial transport problems (in general, both measures could have mass larger than $1$ and could be transported only partially), and/or whether results or ideas from partial transport could be translated into the regularity of the free boundary in the projection.

\subsection{Shape optimization}

If we take a set $A\subset\R^d$ with $|A|<1$ and finite second moment $\int_A |x|^2\,\dd x<+\infty$, a natural question is which is the set $B$ with volume $1$ such that the uniform probability density on $B$ is closest to that on $A$. This means solving a shape optimization problem of the form
$$\min\{W_2^2(\ind_B,\frac{1}{|A|}\ind_A)\;:\; |B|=1\}.$$

The considerations in Section 5.1 show that solving such a problem is equivalent to solving
$$\min\{W_2^2(\ro,\frac{1}{|A|}\ind_A)\;:\; \ro\in\pical_2(\R^d)\}$$
and that the optimal $\ro$ is of the form $\ro=\ind_B$, $B\supset A$. Also, from our Theorem \ref{mainthm2} (with $f=1$), we deduce that if $A$ is of finite perimeter, then the same is true for $B$, and $\mathrm{Per}(B)\leq\frac{1}{|A|}  \mathrm{Per}(A)$ (i.e. the perimeter is bounded by the Cheeger ratio of $A$).

It is interesting to compare this problem with this perimeter bound with the problem studied in \cite{manolis}, which has the same words but in different order: more precisely: here we minimize the Wasserstein distance and we try to get an information on the perimeter, in  \cite{manolis} the functional to be minimized is a combination of $W_2$ and the perimeter. Hence, the techniques to prove any kind of results are different, because here $W_2$ cannot be considered as a lower order perturbation of the perimeter.

As a consequence, many natural questions arise: if $A$ is a nice closed set, can we say that $B$ contains $A$ in its interior? if $A$ is convex is $B$ convex? what about the regularity of $\partial B$?

\subsection{Set evolution problems}
Consider the following problem. For a given set $A\subset\R^d$ we define $\ro_0=\ind_A$. For a time interval $[0,T]$ and a time step $\t>0$ (and $N+1:=\left[\frac{T}{\tau}\right]$) we consider the following scheme $\ro_0^\tau:=\ro_0$ and
\begin{equation}\label{scheme1}
\ro_{k+1}^\tau:=P_{K_1}\left[(1+\t)\ro_k^\tau\right],\ k\in\{0,\dots,N-1\},
\end{equation}
(here we extend the notion of Wasserstein distance and projection to measures with the same mass, even if different from $1$: in particular, the mass of $\ro_k^\tau$ will be $|A|(1+\tau)^k$ and at every step we project $\ro_k^\tau$ on the set of finite positive measure, with the same mass of $\ro_k^\tau$, and with density bounded by $1$, and we still denote this set by $K_1$ and the projection operator in the sense of the quadratic Wasserstein distance onto this set by $P_{K_1}$).
We want to study the convergence of this algorithm as $\tau\to 0.$ This is a very simplified model for the growth of a biological population, which increases exponentially in size (supposing that there is enough food: see \cite{MauRouSan bacteries} for a more sophisticated model) but is subject to a density constraint because each individual needs a certain amount of space. Notice that this scheme formally follows the same evolution as in the Hele-Shaw flow (this can be justified by the fact that, close to uniform density  the $W_2$ distance and the $H^{-1}$ distance are asymptotically the same). 

Independently of the compactness arguments that we need to prove the convergence of the scheme, we notice that, for fixed $\tau>0$, all the densities $\ro_k^\tau$ are indeed indicator functions (this comes from the consideration in Section 5.1). Thus we have an evolution of sets. A natural question is whether this stays true when we pass to the limit as $\tau\to 0$. Indeed, we generally prove convergence of the scheme in the weak sense of measures, and it is well-known that, in general, a weak limit of indicator functions is not necessarily an indicator itself. However  Theorem \ref{mainthm2} provides an a priori bound  the perimeter of these sets. This \(BV\) bound allows to transform weak convergence as measures into strong $L^1$ convergence, and to preserve the fact that these densities are indicator functions.

Notice on the other hand that the same result could not be applied in the case where the projection was performed onto $K_f$, for a non-constant $f$. The reason lies in the term $2\int |\nabla f|$ in the estimate we provided. This means that, a priori, instead of being decreasing, the total variation could increase at each step of a fixed amount $2\int |\nabla f|$. When $\tau\to 0$, the number of iterations diverges and this does not allow to prove any \(BV\) estimate on the solution. Yet, a natural question would be to prove that the set evolution is well-defined as well, using maybe the fact that these sets are increasing in time.
 
 \subsection{Crowd movement with diffusion}
 
In \cite{MauRouSan, aude phd} crowd movement models where a density $\ro$ evolves according to a given vector field $v$, but subject to a density constraint $\ro\leq 1$ are studied. This means that, without the density constraint, the equation would be $\partial_t\ro+\nabla\cdot(\ro v)=0$, and a natural way to discretize the constrained equation would be to set $\tilde\ro^\tau_{k+1}=(id+\tau v)_\#\ro^\tau_{k}$ and then $\ro^\tau_{k+1}=P_{K_1}[\tilde\ro^\tau_{k+1}]$.

What happens if we want to add some diffusion, i.e. if the continuity equation is replaced by a Fokker-Planck equation $\partial_t\ro-\Delta\ro+\nabla\cdot(\ro v)=0$? among other possible methods, one discretization idea is the following: define $\tilde\ro^\tau_{k+1}$ by following the unconstrained Fokker-Planck equation for time $\tau$ starting from $\ro^\tau_{k}$, and then project. 
In order to get some compactness of the discrete curves we need to estimate the distance between  $\ro^\tau_{k}$ and $\tilde\ro^\tau_{k+1}$. It is not difficult to see that the speed of the solution of the Heat Equation (and also of the Fokker-Planck equation) for the distance $W_p$ is related to $\|\nabla \ro\|_{L^p}$. It is well known that these parabolic equations regularize and so the $L^p$ norm of the gradient will not blow up in time, but we have to keep into account the projections that we perform every time step $\tau$. From the discontinuities that appear in the projected measures, one cannot expected that $W^{1,p}$ bounds on $\ro$ are preserved. The only reasonable bound is for $p=1$, i.e. a \(BV\) bound, which is exactly what is provided in this paper. 

The application to crowd motion with diffusion has been studied by the second and third author in \cite{mesz_santamb}.

\subsection{BV estimates for some degenerate diffusion equation}

In this subsection we apply our main Theorem \ref{mainthm1} to establish \(BV\) estimates for for some degenerate diffusion equation.  \(BV\) estimates for these equations are usually  known   and they can be derived by looking at the evolution in time of the \(BV\) norm of the solution. Theorem \ref{mainthm1} allows to give an optimal transport proof of these estimates.
Let \(h:\R^+\to \R\) be a given super-linear convex function and let us consider the problem 
\begin{equation}
\label{pm}
\begin{cases}
\partial_t\ro_t=\nabla \cdot \left(h''(\ro_t)\rho_t\nabla \rho_t\right), & \mbox{\rm in}\ (0,T]\times\R^d,\\
\ro(0,\cdot)=\ro_0, & \mbox{\rm in}\ \R^d,
\end{cases}
\end{equation}
where $\ro_0$ is a non-negative \(BV\) probability density. We remark that by the evolution for any $t\in(0,T]$ $\ro_t$ will remain a non-negative probability density. In the case \(h(\rho)=\rho^{m}/(m-1)\) in equation \eqref{pm} we get precisely the {\em porous medium equation} \(\partial_t \rho=\Delta(\rho^m)\)  (see \cite{vazquez}). 

Since the seminal work of F. Otto (\cite{otto}) we know that the problem \eqref{pm} can be seen as a gradient flow of the functional 
\[
\ds\cF(\ro):=\int_{\R^d}h(\ro)
\]
 in the space $(\cP(\R^d),W_2).$ 
%
As a gradient flow, this equation can be discretized in time through an implicit Euler scheme. More precisely let us take   
a time step $\tau>0$  and let us consider the following scheme: $\ro_0^\tau:=\ro_0$ and
\begin{equation}\label{scheme2}
\ro_{k+1}^\tau:=\argmin_\ro\left\{\frac{1}{2\tau}W_2^2(\ro,\ro_k^\tau)+\int h(\ro)\right\},\  k\in\{0,\dots,N-1\}.
\end{equation}
 where $N:=\left[\frac{T}{\tau}\right]$.
Using piecewise constant and geodesic interpolations between the $\ro_k^\tau$'s with the corresponding velocities and momentums, it is possible to show that as $\tau\to 0$ we will get a curve $\ro_t,\ t\in[0,T]$ in $(\cP(\R^d),W_2)$ which solves 
 \[
 \begin{cases}
 \partial_t\rho_t+\nabla\cdot(\ro_t v_t)=0\\
 v_t =-h''(\ro_t)\nabla\ro_t,
 \end{cases}
 \]
 hence
$$\partial_t\ro_t-\nabla\cdot(h''(\ro_t) \ro_t \nabla\ro_t)=0,$$
that is \(\ro_t\) is a solution to \eqref{pm}, see \cite{AmbGigSav} for a rigorous presentation of these facts.

We now note that  Theorem \ref{mainthm1} implies that 
$$\int_{\R^d}|\nabla\ro_{k+1}^\tau|\,\dd x\le\int_{\R^d}|\nabla\ro_k^\tau|\,\dd x,$$
hence the total variation decreases for the sequence $\ro_0^\tau,\dots,\ro_N^\tau.$ As the estimations do not depend on $\tau>0$ this will remain true also in the limit $\tau\to 0.$ Hence (assuming uniqueness for the limiting  equation) we get that for any $t,s\in[0,T],\ t>s$
$$TV(\ro_t,\R^d)\le TV(\ro_s,\R^d),$$
and in particular for any $t\in[0,T]$
$$TV(\ro_t,\R^d)\le TV(\ro_0,\R^d).$$



\begin{thebibliography}{99}

 \bibitem{MovMin} L. Ambrosio, Movimenti minimizzanti,  {\it Rend. Accad. Naz. Sci. XL Mem. Mat. Sci. Fis. Natur.}, 113 (1995), 191-246.
%
%
\bibitem{AmbGigSav} L. Ambrosio, N. Gigli, G. Savar\'e, {\it Gradient flows in metric spaces and in the space of probability measures}, Lectures in Math., ETH Z\"urich, (2005).

\bibitem{boubu} G. Bouchitt\'e, G. Buttazzo, New lower semicontinuity results for nonconvex functionals defined on measures, {\it Nonlinear Anal.}, 15 (1990), No. 7, 679-692. 

%
%
\bibitem{But book} G. Buttazzo, {\it Semicontinuity, relaxation, and integral representation in the calculus of variations}, Longman Scientific \& Technical, 1989.

\bibitem{buttazzo} G. Buttazzo, F. Santambrogio, A model for the optimal planning of an urban area, {\it SIAM J. Math. Anal.} 37 (2005), No. 2, 514-530.

\bibitem{CarCra} E. Carlen, K. Craig, Contraction of the proximal map and generalized convexity of the Moreau-Yosida regularization in the 2-Wasserstein metric,  {\it Math. and Mech. Compl. Syst.}, Vol. 1 (2013), No. 1, 33--65.

\bibitem{CarSan}  G. Carlier, F. Santambrogio, A variational model for urban planning with traffic congestion, {\it ESAIM Contr. Opt. Calc. Var}. Vol.  11, No. 4,  (2005), 595-613.

\bibitem{caffarelli} L. A. Caffarelli, The regularity of mappings with a convex potential, {\it J. Amer. Math. Soc.}, 5 (1992), No. 1, 99-104.

\bibitem{caffarelli2} L. A. Caffarelli, Boundary regularity of maps with convex potentials, {\it Comm. Pure Appl. Math.}, 45 (1992), No. 9, 1141-1151. 

\bibitem{cannarsa} P. Cannarsa, C. Sinestrari, \textit{Semiconcave functions, Hamilton-Jacobi euqations, and optimal control}, Birkh\"auser, (2004).


\bibitem{introgammaconve}G. Dal Maso: {\it An Introduction to $\Gamma-$convergence}. Birkh\"auser, Basel, (1992).


\bibitem{figa1} A. Figalli, A note on the regularity of the free boundaries in the optimal partial transport problem, {\it Rendiconti del Circolo Matematico di Palermo}, 58 (2009), 283-286. 

\bibitem{figa2} A. Figalli, The optimal partial transport problem, {\it Arch. Rat. Mech. Anal.}, 195 (2010), 533-560. 

\bibitem{gigli mcc} N. Gigli, On the inverse implication of Brenier-McCann theorems and the structure of $(\cP_2(M),W_2)$, {\it Meth. Appl. of Anal.}, Vol 18 (2011), no 2, 
127-158.

\bibitem{carola} J. Lellmann, D.A. Lorenz, C. Sch\"onlieb, T. Valkonen, Imaging with Kantorovich-Rubinstein discrepancy, {\it SIAM J. Imaging Sciences,} Vol 7 No 4 (2014), 2833--2859.

\bibitem{MauRouSan} B. Maury, A. Roudneff-Chupin, F. Santambrogio A macroscopic crowd motion model of gradient flow type, {\it Math. Models and Methods in Appl. Sciences}, Vol. 20 (2010), No. 10, 1787-1821. 


\bibitem{MauRouSan bacteries} B. Maury, A. Roudneff-Chupin, F. Santambrogio Congestion-driven dendritic growth, {\it Discr. Cont. Dyn. Syst.}, Vol. 34 (2014), No. 4, 1575-1604.


\bibitem{mccann} R. J. McCann,  A convexity principle for interacting gases. {\it Adv. Math.} 128 (1997), No. 1, 153-159.

\bibitem{manolis} E. Milakis, On the regularity of optimal sets in mass transfer problems, {\it Comm. Partial Differential Equations}, 31 (2006), no. 4-6, 817-826.


\bibitem{mesz_santamb} A. R. M\'esz\'aros, F. Santambrogio, A diffusive model for macroscopic crowd motion with density constraints, preprint available at \texttt{http://cvgmt.sns.it/paper/2644/}

 \bibitem{otto} F. Otto, The geometry of dissipative evolution equations: the porous medium equation, {\it Commun. in PDE}, { 26} (2001), No. 1-2, 101-174.

\bibitem{aude phd} A. Roudneff-Chupin, Mod\'elisation macroscopique de mouvements de foule, PhD Thesis, Universit\'e Paris-Sud, (2011), available at {\tt http://www.math.u-psud.fr/$\sim$roudneff/Images/these\_roudneff.pdf}


\bibitem{T+GV} { F. Santambrogio},  Transport and concentration problems with interaction effects. {\it J. Global Optim.}, 38  (2007),  no. 1, 129--141.

\bibitem{OTAM} { F. Santambrogio},  Optimal Transport for Applied Mathematicians. Birk\"auser, NY (due in September 2015).

\bibitem{villani} C\'edric Villani, {\it Optimal Transport. Old and New}. Grundlehren
  der Mathematischen Wissenschaften [Fundamental Principles of Mathematical
  Sciences], 338. Springer-Verlag, Berlin, 2009. 

\bibitem{vazquez} J. L V\'azquez, \textit{The porous medium equation.
Mathematical theory}, The Clarendon Press, Oxford University Press, (2007). 

\end{thebibliography}
\end{document}